\numberwithin{equation}{section}
\DeclareMathOperator{\ord}{ord}
\DeclareMathOperator{\supp}{supp}
\DeclareMathOperator{\id}{id}
\DeclareMathOperator{\red}{red}
\newcommand{\Fc}{\mathcal{F}}
\newcommand{\Bc}{\mathcal{B}}
\newcommand{\Ac}{\mathcal{A}}
\newcommand{\Lc}{\mathcal{L}}
\newcommand{\cb}{\mathsf{c}}
\newcommand{\Lb}{\mathsf{L}}
\newcommand{\N}{\mathbb{N}}
\newtheorem{theorem}{Theorem}[section]
\newtheorem{proposition}[theorem]{Proposition}
\newtheorem{lemma}[theorem]{Lemma}
\newtheorem{corollary}[theorem]{Corollary}
\theoremstyle{definition}
\newtheorem{example}[theorem]{Example}
\begin{document}

\title[Minimal distances of plus-minus weighted zero-sum sequences]{The set of minimal distances of the monoid of plus-minus weighted zero-sum sequences and applications to the characterization problem}

\author{Kamil Merito \and  Oscar Ordaz  \and Wolfgang A. Schmid}

\address{(O.O.) Escuela de Matem\'aticas y Laboratorio MoST, Centro ISYS, Facultad de Ciencias, Universidad Central de Venezuela, Ap. 47567, Caracas 1041--A, Venezuela}
\address{(K.M., W.A.S.) Laboratoire Analyse, G{\'e}om{\'e}trie et Applications, LAGA, Universit{\'e} Sorbonne Paris Nord, CNRS, UMR 7539, F-93430, Villetaneuse, France
 \\ and \\ Laboratoire Analyse, G{\'e}om{\'e}trie et Applications (LAGA, UMR 7539) \\ COMUE  Universit{\'e} Paris Lumi{\`e}res \\  Universit{\'e} Paris 8, CNRS \\  93526 Saint-Denis cedex, France} 
\email{kamilmerito@hotmail.com}
\email{oscarordaz55@gmail.com}
\email{wolfgang.schmid@univ-paris8.fr, schmid@math.univ-paris13.fr}

\subjclass[2010]{11B30, 11B75, 11R04, 20K01}

\keywords{finite abelian group, weighted subsum, zero-sum problem}

\begin{abstract}
Recently a systematic investigation of monoids of sequences of plus-minus weighted zero-sum sequences had been started, which is among others motivated by applications to monoids of norms of algebraic integers. In the current paper these investigations are continued. The focus is on the set of minimal distances  of these monoids, which is an important arithmetical invariant. Applications to the characterization problem are discussed as well.    
\end{abstract}
\maketitle

\section{Introduction}

There is a wealth of literature on the arithmetic of various types of monoids. A central class are Krull monoids and related to this monoids of zero-sum subsequences over abelian groups. 

In a recent paper Boukheche and the authors of the current paper \cite{BMOS} considered monoids of weighted zero-sum sequences; while there are many investigations on zero-sum problems with weights, see for example \cite{adGS,adhi0,remarksPM}, the arithmetic of these monoids had not been investigated. Moreover, that paper established a connection to certain monoids of norms of algebraic integers. Indeed, these monoids of norms had first been studied under the term normsets by Coykendall \cite{Coykendall}; see \cite{CoykendallHasenauer} for a more recent exposition.  

In a subsequent paper Geroldinger, Halter-Koch, and Zhong \cite{GHKZ} generalized this connection and obtained further results on the arithmetic of monoids of weighted zero-sum sequences in particular on their (local) elasticities. A partciular emphasis was put on monoids of plus-minus weighted zero-sum sequences. For further results see \cite{Fabsitsetal,GeroldingerKainrath}.

In the current paper we further pursue the investigations of monoids of weighted zero-sum sequences. In particular, we study their sets of minimal distances, that is, the set formed by the minimal distances of their divisor-closed submonoids, denoted $\Delta^{\ast}(H)$; sometimes this set is also called the set of differences of the monoid, but this name is less common and risks to be confused with the set of distances $\Delta(H)$. This set has been studied a lot for Krull monoids, mostly for Krull monoids with finite class group (see \cite{GeroldingerZhongDelta,ZhongDelta}); for a result in the case of infinite class groups see \cite{ChapmanSchmidSmith}. 
 
Here, we investigate this set for monoids of plus-minus weighted zero-sum sequences over abelian groups, mainly for finite abelian groups. It turns out that the results depend strongly on the parity of the order of the group. In the case of groups of odd order (and not $1$), we obtain strong results on the sets  $\Delta^{\ast}(H)$; we show that  
$\max\Delta^{\ast}(H) = \exp(G)-2$  in that case and obtain strong constraints on its other elements that sometimes allow us to obtain a complete description. 
In the case of groups of even order, we show that $\max \Delta^{\ast}(H)$ can be quite large, it can exceed the value of the invariant for the monoid of zero-sum sequences without weights over the same group.

In the final section, we apply these results to characterize certain monoids of plus-minus weighted zero-sum problems by their systems of sets of lengths. 
In particular, we can show that the exponent of the group is determined by the system of sets of lengths for every group of odd order (excluding a few well-understood cases of very small groups). Then we apply this result to characterize elementary $p$-groups for $p$ odd and also obtain an alternative charaterization for cyclic groups of odd order (a first characterization was recently obtained in \cite{Fabsitsetal}).

\section{Preliminaries}

We recall some key-notions that we use frequently in the current paper. For the most part they are fairly  well-established in factorization theory  and we refer to \cite{GeroldingerHalterKochBOOK} for a complete exposition.

By $\mathbb{N}$ and $\mathbb{N}_0$ we denote the set of positive and non-negative integers, respectively. For real numbers $a$ and $b$, we denote by $[a, b] = \{ x \in  \mathbb{Z} \colon a  \le x \le b \}$ the interval of integers. 

\subsection{Abelian groups}
For $n \in \mathbb{N}$, let $C_n$ a cyclic group of order $n$; usually we use additive notation for abelian groups. 
It is well-known that for $(G,+,0)$ a nontrivial finite abelian group there are uniquely determined $1< n_1 \mid \dots \mid n_r$ such that $G \cong C_{n_1} \oplus \dots \oplus C_{n_r}$ where $r$ is the rank of $G$, denoted by $\mathsf{r}(G)$, and $n_r$ is the exponent of $G$, denoted by $\exp(G)$.  The exponent of a group of order $1$ is $1$ and its rank is $0$. If the exponent is a power of a prime $p$, then we call $G$ a $p$-group, and we say that it is an elementary $p$-group if the exponent is $p$.

For $m \in \mathbb{N}\setminus \{1\}$, let $\mathsf{r}_m(G) = |\{ i \colon m \mid n_i\}|$ denote the $m$-rank of $G$; for the most part we use this for $m$ a prime number.  

A family $(e_1, \dots, e_s)$ over $G$ is called independent if $\sum_{i=1}^s a_i e_i= 0$ with $a_i \in \mathbb{Z}$ implies that $a_ie_i =  0$ for each $i \in [1,s]$. 
We also use the terminology independent for subsets of $G$ in the obvious way.  
 
\subsection{Monoids} Throughout the paper,  monoid always means a semigroup with identity that is commutative and satisfies the cancellation law. We usually use multiplicative notation for monoids and denoted the identity element by $1_H$ or simply $1$ if there is no risk of confusion. 

An element $a$ of a monoid $H$ is called invertible or a unit if there exists an element $a' \in H$ such $aa'= 1_H$; the set of invertible elements of $H$ is denoted by $H^{\times}$; it is a subgroup of $H$.  A reduced  monoid is a monoid whose only invertible element is  $1_H$;  one calls $H_{\red}=H/H^{\times}$ the reduced monoid associated to $H$. 

We say that $b$ divides $a$ in $H$, and we write $b\mid_H a$ if there exists some $c \in H$ such that $a= bc$. If there is no risk of confusion we drop the subscript $H$. If $bc= a$, then we write $b^{-1}a$ for $c$. 

An element $a\in H$ is called irreducible or an atom (in $H$) if it has no proper divisors, that is,  $a = bc$ with $b,c \in H$ implies that $b$ or $c$ are invertible. The set of irreducible elements of $H$ is denoted by $\mathcal{A}(H)$. An element $p \in H \setminus H^{\times}$ is called prime if $p \mid bc$ with $b,c \in H$ implies that $p \mid b$ or $p \mid c$. 

A submonoid $S$ of $H$ is called a saturated submonoid if for $s,t \in S$ one has $s \mid_S t$ if and only if  $s\mid_H t$    (of course one implication is trivial) and it is called a divisor closed submonoid  if for $s \in S$ and $a \in H$, one has that $a\mid_H s $ implies that $a \in S$. 

For a set $P$, let $\Fc(P)$ denote the free commutative monoid over $P$, as is common in the context of factorization theory we refer to its elements as sequences. 
To explain the terminology `sequence' one can note that it is a collection of elements of $P$ where repetitions are possible yet the ordering of  elements is disregarded, in that sense they are finite sequences where the ordering is disregarded.  

Essentially by definition, for $S\in \Fc(P)$, there are unique $v_p \in \mathbb{N}_0$, all but finitely many equal to $0$, such that $S= \prod_{p \in P}p^{v_p}$; the  integer $v_p$ is called the multiplicity of $p$ in $S$ or also the $p$-adic valuation of $S$, denoted by $\mathsf{v}_p(S)$.
Moreover, there exist up to ordering uniquely determined $p_1, \dots, p_{\ell} \in P$ (not necessarily distinct) such that $S= p_1 \dots p_{\ell}$. For a sequence $S$ a divisor $T$ of $S$ in $\mathcal{F}(P)$ is called a subsequence of $S$.  
The identity element of the monoid of sequences is called the  empty sequence, and is denote it by $1$ unless there is a risk of confusion. 
One calls $\ell$ the length of $S$, it is denoted by $|S|$. Moreover, the set $\{p_1, \dots , p_{\ell}\}$ is called the support of $S$, denoted $\supp(S)$; its cardinality is bounded above by $\ell$ yet it is usually strictly smaller as repeated elements are counted only once. 

In the current paper we are mainly interested in the case of sequences over abelian groups or subsets of abelian groups. Let  $G_0$ be a subset of an abelian group $(G, +)$. In this case, for a sequence $S= g_1 \dots g_{\ell} \in \Fc(G_0)$, we denote by $\sigma(S) = \sum_{i=1}^{\ell} g_i \in G $ its sum and by  $\sigma_{\pm}(S) = \{ \sum_{i=1}^{\ell} \epsilon_ig_i  \in G  \colon \epsilon_i \in \{+1, -1\}\}$ the set of plus-minus weighted sums of $S$. 
It is possible to consider other sets of weights, yet in the current paper we deal exclusively with this sets of weights. As an aside, for the case of general weights one takes endomorphisms of $G$ rather than integers as weights; the current set of weights would thus correspond to $\{\id_G, -\id_G\}$.  
 
If  $\sigma(S) = 0$, the sequence is called a zero-sum sequence and if $0 \in \sigma_{\pm}(S)$,  the sequence is called a plus-minus weighted  zero-sum sequence.  The set of all sequences over $G_0$ that are zero-sum sequences is denoted by $\Bc(G_0)$, and the set set of all plus-minus weighted  zero-sum sequence  is
denoted by $\Bc_{\pm}(G_0)$. It is easy to see that both are submonoids of $\Fc(G_0)$,  and that $\Bc(G_0)$ is a submonoid of $\Bc_{\pm}(G_0)$. 

It is important to note though that while $\Bc(G_0)$ is a saturated submonoid of $\Fc(G_0)$, it is usually not true that $\Bc_{\pm}(G_0)$ is a saturated submonoid of $\Fc(G_0)$. This is quite relevant because a monoid $H$ that is a saturated submonoid of a factorial monoid is a Krull monoid. Thus, while  $\Bc(G_0)$ is a Krull monoid this usually is not the case for $\Bc_{\pm}(G_0)$. Indeed, in case $G_0 = G$, it is known that  $ \mathcal{B}_{\pm}(G)$ is Krull if and only if $G$ is an elementary $2$-group and it thus actually is equal to  $ \mathcal{B}(G)$; for all other groups the monoids are not even transfer Krull   (see below for a definition and \cite{BMOS} for the result).  
 
While Krull monoids are arguably the most intensely studied class of monoids in factorization theory, here we do not go into much detail and refer to, for example, \cite{GeroldingerMonthly, GZ_survey, schmid_surveySoL} for survey articles.

\subsection{Factorizations} We continue by recalling some notions from factorization theory. A monoid  $H$ is called atomic if each non-invertible element of $H$ can be written as a (finite) product of irreducible elements. The monoid of factorizations of $H$, denoted by $\mathsf{Z}(H)$, is the monoid $\Fc(\Ac(H_{\red}))$. 

The homomorphism $\pi_H: \mathsf{Z}(H) \to H_{\red}$, which maps the formal product $a_1 \dots a_k$ to its value, 
is called the factorization homomorphism. It is surjective if and only if $H$ is atomic.  For $a \in H$, the set $\mathsf{Z}_H(a) = \pi_H^{-1}(a H^{\times})$ is called the set of factorizations of $a$ in $H$; if the monoid $H$ is obvious from context we drop $H$ from  the notation.  
 
We stress that  the  monoid of factorizations formalizes the idea that one identifies factorizations of elements of $H$  that differ only by the ordering of the terms or multiplication by units.  

For $z \in \mathsf{Z}(H)$, one calls $|z|$, which is defined as $\mathsf{Z}(H)$ is a free monoid, the length of the factorization; informally it is the number of irreducible elements in the factorization where multiplicities are taken into account. Moreover, one calls $\mathsf{L}_H(a) = \{ |z|  \colon z \in \mathsf{Z}_H(a)\}$ 
the set of lengths of $a$ in $H$. Finally, the system of set of lengths of $H$ is defined as $\mathcal{L}(H) = \{ \mathsf{L}_H(a) \colon a \in H\}$.  

The monoid is called a bounded factorization monoid, BF-monoid for short, if $\mathsf{L}_H(a)$ is finite for each $a \in H$. 
Similarly, monoids for which even the sets of factorizations are finite are called finite factorization monoids. 
We note that $H$ is factorial if and only if $|\mathsf{Z}_H(a)| = 1$ for each $a \in H$. One says that $H$ is half-factorial if $|\mathsf{L}_H(a)| = 1$ for each $a \in H$.

We end the section by briefly recalling the concept of transfer homorphism, see for example, \cite[Section 1.3]{GeroldingerBarcelona}. 
For a generalization to not necessarily commutative monoids see \cite{BaSe}. 

Let $H$ and $\mathcal{B}$ be monoids. A monoid homomorphism $\Theta: H \to \mathcal{B}$ is a transfer homomorphism if it satisfies the following:  
\begin{itemize}
\item[T1] $\mathcal{B} = \Theta(H)\mathcal{B}^{\times}$ and $\Theta^{-1}(\mathcal{B}^{\times}) = H^{\times}$. 
\item[T2] If $u \in H$ and $b,c \in \mathcal{B}$ with $\Theta(u) = bc$, then there exist $v,w \in H$ such that $u =vw$, $\Theta(v)  \simeq b$ and  $\Theta(w) \simeq c$.
\end{itemize}

A monoid (not necessarily commutative) is called transfer Krull if it admits a tranfer homomorphism to a Krull monoid.

The relevance of the notion of transfer homomorphism is due to the fact that it preserves many arithmetical properties. In particular, if $\Theta: H \to \mathcal{B}$ is a transfer homomorphism, then $\Lb_H(a) = \Lb_{\mathcal{B}}(\Theta(a))$ for each $a \in H$, and in particular $\Lc(H)= \Lc(\mathcal{B})$. 
There are various arithmetical invariants that are derived from sets of lengths and that are thus also preserved. We recall some of them in the next section.

\section{Structure of sets of lengths}
\label{sec_SSL}

In the current section we recall a few notions and results on sets of lengths, focusing on the context at hand.  For a more complete presentation see, e.g., \cite{GeroldingerMonthly,GeroldingerHalterKochBOOK,GZ_survey}. 

Let $H$ be a BF-monoid. To avoid trivial corner cases we assume that $H \neq H^{\times}$.
  
For $k \in \mathbb{N}$ one sets $\rho_k(H) = \sup \{\max \mathsf{L}_H(a) \colon k \in \mathsf{L}_H(a), \, a\in H \} \in \mathbb{N}_0 \cup \{\infty\}$, that is, $\rho_k(H)$ is (in case it is finite) the largest number of factors that can appear in the factorization into irreducibles of an element that is the product of $k$ irreducible elements.  
Obviously, $\rho_1(H) = 1$ and $\rho_k(H) \ge k$ for each $k$. The value $\rho_k(H)$ is sometimes called the $k$-th local elasticity of $H$. This terminology is derived from that of the elasticity of a monoid, denoted $\rho(H)$. If $A \subseteq \N$ is a finite subset, we call $\rho(A) = \max A / \min A \in \mathbb{Q}_{ \ge 1}$ the elasticity of $A$, 
and we set $\rho (\{0\}) = 1$. The elasticity of an element $a \in H$, denoted $\rho(a)$, is just $\rho(\mathsf{L}_H(a))$. 
Finally, the elasticity of $H$, denoted $\rho(H)$, is defined as $\sup \{ \rho(a) \colon a \in H \} \in \mathbb{R}_{ \ge 1} \cup \{\infty\}$. 
It is not difficult to see that $\rho(H)  = \lim_{k \to \infty} \rho_k(H) / k$.

Another important notion to investigate sets of lengths are sets of distances.  
For $A \subseteq \mathbb{Z}$ a finite subset, we denote by $\Delta (A)$ the set of (successive) distances of $A$, that is, for $A  = \{a_0, \dots, a_k\} $ with $a_0 < \dots < a_k$ one has $\Delta(A) =  \{ a_1  - a_0, a_2 - a_1,  \dots, a_k - a_{k-1}\} =   \{ a_{i + 1 } - a_i \colon i \in [1,k] \} $. 
Thus, for each $a \in A$, and $a$ not the maximum of $A$, we have $d \in \Delta(A)$ if $d$ is the smallest $d \in \N$ such that 
$a +d \in A$. 

For  $a \in H$, we set $\Delta(a) = \Delta (\Lb(a))$ and  
\[\Delta(H) = \bigcup _{a \in H} \Delta (\Lb(a)).\]
We observe that $H$ is half-factorial if and only if $\rho(H) = 1$ if and only if $\Delta (H) = \emptyset$. 
It is well-known that if $\Delta(H) \neq \emptyset$, then  $\min \Delta(H) = \gcd \Delta(H)$. 

Moreover, we set 
\[
\Delta^{\ast}(H)  = \{\min \Delta (S)  \colon S \subseteq H \text{ is a divisor closed submonoid with $\Delta(S) \neq \emptyset$}\}
\]
the set of minimal distances of $H$. The relevance of the notion $\Delta^{\ast}(H)$ is due to the fact that it is a natural choice in the structure theorem for sets of lengths, which we recall below.

Specifically, for $H$ a finitely generated monoid and not half-factorial,  there is some $M \in \mathbb{N}_0$ such that each set of lengths $L$ of $H$ is an almost arithmetical multiprogression with bound $M$ and difference $d \in \Delta^{\ast}(H) $, that is, 
\[
L  =  y + (L_1 \cup L^{\ast} \cup (\max L^{\ast} + L_2)) \subseteq y + \mathcal{D} + d\mathbb{Z}
\]
with  $y \in \mathbb{N}_0$,  $\{0,d\} \subseteq \mathcal{D}  \subseteq [0,d]$,  $-L_1, L_2 \subseteq [1,M]$, $\min L^{\ast} = 0$ and $L^{\ast} = [0,\max L^{\ast}] \cap \mathcal{D} + d\mathbb{Z} $.

The set $\Delta^{\ast}(H)$ is not the only choice for which this  result holds, indeed it obviously would hold for any superset of that set and in the other direction, if $d,d' \in\Delta^{\ast}(H)$ with $d' \mid d$, then possibly altering the constant $M$ slightly, the result would also hold for $\Delta^{\ast}(H) \setminus \{d'\}$. 

However, the set $\Delta^{\ast}(H)$ is a natural choice in that it is a set that naturally comes out of a proof of this result. Very roughly speaking, 
for an element that has `many' irreducible factors there ought to be some irreducibles that occur with a `high' multiplicity and they will induce a repeating pattern corresponding to the minimal distance of the divisor-closed submonoid they generate. 

Or, looked at it from another perspective, considering an element that is a product of multiple copies of all irreducible elements from a divisor-closed submonoid, one can see that its set of lengths will be to an almost arithmetic progression whose difference is the minimal distance of the submonoid. 

Indeed, if one denotes by $\Delta_1(H)$ the set of all $d \in \mathbb{N}$ such that $\mathcal{L}(H)$ contains almost arithmetical progression with difference $d$ of arbitrarily large length, then it is known that $\Delta^{\ast}(H) \subseteq \Delta_1(H)$ and each $d_1 \in \Delta_1(H)$ divides some $d \in \Delta^{\ast}(H)$. 
Recall that an  almost arithmetic progression  is an almost arithmetic multiprogression with $\mathcal{D} = \{0,d\}$, that is an arithmetic progression where some elements at the start and at the end might be missing. 

While we mentioned the result for finitely generated monoids, this is not the only class of monoids for which the result holds; one way to extend the result, yet not the only way, is to use tranfer homomorphisms to finitely generated monoids.

We recall a few well-known facts that are useful in the remainder of the paper.  Let $H$ be a BF-monoid with $\Delta(H)\neq \emptyset$. We already recalled that $\min \Delta(H) = \gcd \Delta(H)$, and thus  in particular, for every choice of $a \in H$ and $l, l' \in \Lb(a)$, one has $\min \Delta(H) \mid l' - l$.

Moreover, if $S$ is a divisor-closed submonoid of $H$, then $ \Delta(S) \subseteq \Delta (H)$ and thus $\min \Delta(H) \le \min \Delta(S) $ and indeed 
$\min \Delta(H) \mid \min \Delta(S)$.

The results recalled hitherto apply to all finitely generated monoids and it is not hard to see that monoids of plus-minus weighted zero-sum sequences over subsets of finite abelian groups are finitely generated, and thus the results apply.
More precisely, for $G$ a finite abelian group and $H = \Bc_{\pm}(G)$ one has that  $\{|A| \colon A \in \Ac(H)\}$ is bounded above by the Davenport constant of $G$, denoted $ \mathsf{D}(G)$. Moreover, $\max\{|A| \colon A \in \Ac(H)\}$ is called the Davenport constant of the monoid $H$, denoted $\mathsf{D}(H)$; this definition is used for every monoid embedded into a free monoid, not just monoids of (weighted) zero-sum sequences.  

We recall that the Davenport constant of a finite abelian group $G$, denoted $\mathsf{D}(G)$, is defined as the smallest integer $\ell$ such that each sequence of lenght at least $\ell$ admits a subsequence that is a zero-sum subsequence. It is well-known and easy to see that it is equal to  $\mathsf{D}(\Bc(G))$ the Davenport constant of the monoid $\Bc(G)$. 
It is well known that for $G \cong C_{n_1} \oplus \dots \oplus C_{n_r}$ with  $1< n_1 \mid \dots \mid n_r$ one has $|G| \ge \mathsf{D}(G)\ge \sum_{i=1}^r (n_i-1) + 1$, and the latter quantity is denoted by $\mathsf{D}^{\ast}(G) $. Indeed, for $p$-groups and groups of rank at most two, one knows that  $\mathsf{D}(G)= \sum_{i=1}^r (n_i-1) + 1$, yet this is not true in general.  

We point out that, as is the case without weights, the situation is very different for infinite abelian groups. Indeed, Geroldinger and Kainrath \cite{GeroldingerKainrath} recently showed that for $G$ an infinite abelian group every finite subset of $\mathbb{N}_{\ge 2}$ is a set of lengths of some element of $H=\Bc_{\pm}(G)$, just like for $\Bc(G)$. Note that since these monoids are still BF-monoids and the only sets of lengths that contain elements less than $2$ are $\{0\}$ and $\{1\}$, this yields indeed a complete description of the system of sets of lengths of $\Bc_{\pm}(G)$ for infinite $G$. We recall that by a result of Kainrath \cite{Kainrath} that was already  known for  $\Bc(G)$. 

However, while $\Delta^{\ast} (\Bc(G))  = \mathbb{N}$ (see \cite{ChapmanSchmidSmith}) this is not always the case for $\Delta^{\ast} (\Bc_{\pm}(G))$; this was already noted in \cite{GeroldingerKainrath} and we provide further results on that problem. 

We close this section with observations and results specific to monoids of plus-minus weighted zero-sum sequences.  
Since we want to study $\Delta^{\ast}(H)$, which is defined using divisor-closed submonoids of $H$, it is crucial to know the divisor-closed submonoids.  
Let $G$ be an abelian group. While it is immediate that for $G_0 \subseteq G$, the monoid $\Bc_{\pm}(G_0)$ is a divisor-closed submonoid, there might in principle be other divisor-closed submonoids. However, it turns out that this is not the case. We include a short proof for the particular case; the result if obtained for arbitrary sets of weights for finite abelian groups in \cite{Fabsitsetal}.  

\begin{lemma}
\label{lem_divclosed}
Let $G$ be an abelian group. Then $\Bc_{\pm}(G_0)$ with $G_0 \subseteq G$ a subset are all the divisor-closed submonoids of $\Bc_{\pm}(G)$.
\end{lemma}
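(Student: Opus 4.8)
The statement has two parts: (1) each $\Bc_{\pm}(G_0)$ is a divisor-closed submonoid of $\Bc_{\pm}(G)$, and (2) there are no others. Part (1) is essentially immediate: if $B \in \Bc_{\pm}(G_0)$ and $A \mid_{\Fc(G)} B$, then $A$ is a subsequence of $B$, so $\supp(A) \subseteq \supp(B) \subseteq G_0$, hence $A \in \Fc(G_0)$; and if moreover $A \mid_{\Bc_{\pm}(G)} B$, writing $B = AC$ with $C \in \Bc_{\pm}(G)$, we must check $A \in \Bc_{\pm}(G_0)$, i.e.\ that $A$ is itself a plus-minus weighted zero-sum sequence. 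This is the one point in (1) that needs an argument: from $0 \in \sigma_{\pm}(B) = \sigma_{\pm}(AC)$ one gets signs $\epsilon_i$ on the terms of $A$ and $\delta_j$ on the terms of $C$ summing to $0$, but that does not obviously give $0 \in \sigma_{\pm}(A)$. So the real content of the lemma, and the main obstacle, is already visible here: one must show that a divisor-closed submonoid $S$ containing $B$ must also contain $A$ as an \emph{element of $S$}, equivalently (since $S \subseteq \Bc_{\pm}(G)$) that $A \in \Bc_{\pm}(G)$. This requires understanding when a subsequence of a plus-minus weighted zero-sum sequence is again one.

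First I would set up the general framework. Let $S$ be an arbitrary divisor-closed submonoid of $\Bc_{\pm}(G)$, and put $G_0 = \bigcup_{B \in S} \supp(B) \subseteq G$. The goal is to prove $S = \Bc_{\pm}(G_0)$. The inclusion $S \subseteq \Bc_{\pm}(G_0)$ is clear by construction. For the reverse inclusion, I must show every $A \in \Bc_{\pm}(G_0)$ lies in $S$. By definition of $G_0$, every $g \in \supp(A)$ appears in some $B_g \in S$. The key reduction: it suffices to show that for each $g \in G_0$, the single-element-squared sequence that forms an atom of $\Bc_{\pm}(G)$ containing $g$ — concretely $g^2$ if $g \ne -g$, or $g$ if $g = -g$ (i.e.\ $2g = 0$), since $\sigma_\pm(g) = \{g, -g\} \ni 0$ iff $2g=0$, and $\sigma_\pm(g^2) = \{2g, 0, -2g\} \ni 0$ always — lies in $S$; then, since $S$ is a submonoid and any $A \in \Bc_{\pm}(G_0)$ with $0 \in \sigma_\pm(A)$... here I need to be careful, because $A$ need not factor as a product of such tiny pieces.

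The cleaner route is: show (i) for each $g \in G_0$ the sequence $g^2 \in S$ (this follows because $g \mid_{\Fc} B_g$, so $g$ divides $B_g$ in $\Fc(G)$; but $g \notin \Bc_{\pm}(G)$ in general, so instead one uses that $B_g$, being in $\Bc_{\pm}(G)$ and having $g$ in its support, can be written as a product of atoms of $\Bc_{\pm}(G)$ one of which — or a sub-product of which — is divisible in $\Bc_{\pm}(G)$ by $g^2$; and here I'd invoke the elementary fact, easy to prove directly, that if $T \in \Bc_{\pm}(G)$ and $g \in \supp(T)$ with $2g \ne 0$ then $g^2 \mid_{\Bc_{\pm}(G)} T$ — pick the sign expansion realizing $0 \in \sigma_\pm(T)$, the term $g$ gets some sign, and there must be a compensating structure; if $2g = 0$ then already $g \mid_{\Bc_\pm(G)} T$). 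Then (ii) observe that $\Bc_{\pm}(G_0)$ is generated as a monoid by its atoms, each atom $U$ is a minimal plus-minus weighted zero-sum sequence over $G_0$, and I claim each such atom is divisible in $\Bc_\pm(G)$ by a product $\prod_{g} g^{e_g}$ of the tiny pieces from (i) whose product equals $U$ — but that is false in general (atoms of $\Bc_\pm$ need not be products of the $g^2$'s). So (ii) fails, and I must instead go the other way.

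**The correct main step.** The right statement to prove is the \emph{converse-free} divisibility fact: \emph{if $A \in \Bc_{\pm}(G)$ and $A \mid_{\Fc(G)} B$ with $B \in \Bc_\pm(G)$, then $A \mid_{\Bc_\pm(G)} B$} — i.e.\ the complementary subsequence $C = A^{-1}B$ (in $\Fc(G)$) also satisfies $0 \in \sigma_\pm(C)$. Granting this, the lemma is quick: given arbitrary divisor-closed $S$ with associated $G_0$, and given $A \in \Bc_\pm(G_0)$, pick for its (finitely many) support elements $g$ witnesses $B_g \in S$ and form $B = \prod_g B_g \in S$ (product in $S$, which is a submonoid). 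Then $A \mid_{\Fc(G)} B^{k}$ for $k$ large enough that $\vb_g(B^k) \geq \vb_g(A)$ for all $g$, and $B^k \in S \subseteq \Bc_\pm(G)$ with $A \in \Bc_\pm(G)$, so by the fact $A \mid_{\Bc_\pm(G)} B^k$; since $S$ is divisor-closed in $\Bc_\pm(G)$ and $B^k \in S$, we conclude $A \in S$. Hence $\Bc_\pm(G_0) \subseteq S$, completing the proof. The crux, therefore, and the step I expect to be the main obstacle, is proving that complementary-subsequence fact: that $0 \in \sigma_\pm(B)$ and $0 \in \sigma_\pm(A)$ for a subsequence $A \mid B$ forces $0 \in \sigma_\pm(A^{-1}B)$. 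I would prove it by taking sign-vectors $\epsilon$ on $A$ and $\epsilon'$ on $B$ realizing the zero sums, and using the sign-vector $\epsilon'$ restricted to the complement $C$ together with the identity $\sigma_{\epsilon'}(B) = \sigma_{\epsilon' |_A}(A) + \sigma_{\epsilon'|_C}(C)$: since $\sigma_{\epsilon'}(B) = 0$, we get $\sigma_{\epsilon'|_C}(C) = -\sigma_{\epsilon'|_A}(A)$, and then flipping all signs on $A$ inside $\epsilon'$ (which is allowed and changes $\sigma_{\epsilon'|_A}(A)$ to its negative, while not affecting that $B$'s sign-vector is still valid only if...) — this needs the observation that we are free to choose the sign vector on $A$ independently because $A$ itself has $0 \in \sigma_\pm(A)$: actually the clean argument is that $\sigma_\pm(C) \supseteq \sigma_\pm(B) - \sigma_\pm(A)$ as a set difference, more precisely $\{b - a : b \in \sigma_\pm(B),\, a \in \sigma_\pm(A)\} \subseteq \sigma_\pm(C)$ does NOT hold either in general. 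The honest resolution: $\sigma_\pm(B) \subseteq \sigma_\pm(A) + \sigma_\pm(C)$ holds trivially (split any sign vector on $B$), and one wants a reverse-type containment; the usable fact is $0 \in \sigma_\pm(A) \cap \sigma_\pm(C)$ whenever $0 \in \sigma_\pm(B)$ — equivalently $\sigma_\pm(C) \ni 0 \Leftrightarrow \sigma_\pm(A) \ni 0$ given $\sigma_\pm(B) \ni 0$. I would establish this by: take the realizing sign vector on $B$; it restricts to sign vectors on $A$ and $C$ with $\sigma(A\text{-part}) + \sigma(C\text{-part}) = 0$, say both equal to $\pm h$; if $h = 0$ we are done; if $h \ne 0$, then since independently $0 \in \sigma_\pm(A)$ there is a sign vector on $A$ with sum $0$, but that does not directly fix $C$. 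At this point I would fall back to the structural description of atoms of $\Bc_\pm(G)$ and divisibility therein from \cite{BMOS}, which I am permitted to cite, to close the gap — that is where the real work (or the real citation) lives, and it is the step most likely to require care or an appeal to prior results.
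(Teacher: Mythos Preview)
Your ``correct main step'' --- that $A \in \Bc_{\pm}(G)$, $B \in \Bc_{\pm}(G)$, and $A \mid_{\Fc(G)} B$ together imply $A \mid_{\Bc_{\pm}(G)} B$ --- is false. Take $G = C_3$, $g$ a generator, $B = g^3$, and $A = g^2$. Both $A$ and $B$ lie in $\Bc_{\pm}(G)$ (indeed both are atoms there), and $A \mid_{\Fc(G)} B$, but the complement $C = g$ satisfies $\sigma_{\pm}(C) = \{g, -g\}$, which does not contain $0$. This is precisely the failure of $\Bc_{\pm}(G)$ to be saturated in $\Fc(G)$, a point the paper stresses in its preliminaries; no citation to \cite{BMOS} will rescue a false statement, and your subsequent attempts to prove it (splitting sign vectors, etc.) were doomed from the start. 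Consequently the deduction ``$A \mid_{\Fc(G)} B^k$ and $A,B^k \in \Bc_{\pm}(G)$, hence $A \mid_{\Bc_{\pm}(G)} B^k$'' collapses.

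The paper avoids this trap by a more delicate construction. Given $S = g_1 \cdots g_k \in \Bc_{\pm}(G_0)$ with a witnessing sign vector $\sum_i \epsilon_i g_i = 0$, one picks for each \emph{occurrence} $g_i$ a separate element $T_i$ of the divisor-closed submonoid with $g_i \in \supp(T_i)$, and shows $S \mid_{\Bc_{\pm}(G)} T_1 \cdots T_k$. Writing $T_i' = g_i^{-1}T_i$, the complement is $T_1' \cdots T_k'$. The crucial observation is the sign-symmetry $\sigma_{\pm}(T_i') = -\sigma_{\pm}(T_i')$: since $0 \in \sigma_{\pm}(T_i)$, some element of $\{g_i, -g_i\}$ lies in $-\sigma_{\pm}(T_i')$, and by symmetry \emph{both} do, so in particular $\epsilon_i g_i \in \sigma_{\pm}(T_i')$. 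Summing over $i$ gives $0 = \sum_i \epsilon_i g_i \in \sigma_{\pm}(T_1' \cdots T_k')$, and the complement is in $\Bc_{\pm}(G)$ as required. The point is that one does not try to match an arbitrary subsequence against an arbitrary ambient element; one engineers the ambient element so that each removed term has its own ``buffer'' $T_i'$ that can absorb either sign. (Incidentally, your worry about part (1) is misplaced: if $A \mid_{\Bc_{\pm}(G)} B$ then $A \in \Bc_{\pm}(G)$ by definition of divisibility in that monoid, so only $\supp(A) \subseteq G_0$ needs checking, which is immediate.)
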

\begin{proof}
It is clear that $\Bc_{\pm}(G_0)$ for $G_0 \subseteq G$ is a divisor-closed submonoid of $\Bc_{\pm}(G)$. 
Assume that $H \subseteq \Bc_{\pm}(G)$ is divisor-closed. We need to show that there exists some  $G_0 \subseteq G$ such that $H = \Bc_{\pm}(G_0)$.

Set $G_0 = \bigcup_{B\in H} \supp(B)$. Then  $H \subseteq  \Bc_{\pm}(G_0)$ and we need to show the converse inclusion. 
Let $S \in  \Bc_{\pm}(G_0) $, say, $S= g_1 \dots g_k$  with $g_i \in G_0$ and let $\epsilon_i \in \{+1,-1\}$ such that $\sum_{i = 1}^k \epsilon_i g_i = 0$. 
By the definition of $G_0$, for each $g_i$ there exists some $T_i \in H$ such that $g_i$ occurs in $T_i$. 

Now $S$ is a subsequence of $T_1 \dots T_k$. It remains to show that $S$ divides $T_1 \dots T_k$ in $\Bc_{\pm}(G)$, that is, $S^{-1}T_1 \dots T_k \in \Bc_{\pm}(G)$. 

Let $T_i'= g_i^{-1} T_i$. Then $S^{-1}T_1 \dots T_k = T_1' \dots T_k'$. Since each $T_i$ is a plus-minus weighted zero-sum sequence we know that $g_i$ or $-g_i$ is in $-\sigma_{\pm}(T_i')$, yet flipping all the weights we see that indeed both are contained in this set. Therefore for each $i$ we get that $\epsilon_i g_i \in \sigma_{\pm}(T_i')$, which implies that  $\sum_{i = 1}^k \epsilon_i g_i  \in \sigma_{\pm}(T_1' \dots T_k')$. Yet $\sum_{i = 1}^k \epsilon_i g_i$ is $0$ by assumption and  $T_1' \dots T_k'$ is indeed a plus-minus weighted zero-sum sequence. 
\end{proof}

\section{Minimal distances}

In the current section we carry out our investigation of sets of distances, in particular their minima. 

We first recall a result essentially in  \cite{GHKZ}.  

\begin{lemma} 
\label{min=1}
Let $G$ be a finite abelian group and let $H = \Bc_{\pm} (G)$. 
Then $\Delta   (H) = \emptyset$ if and only if $|G| \le 2$.
If $|G| \ge 3$, then $1\in \Delta(H)$ and in particular $\min \Delta^\ast   (H) = \min \Delta   (H) = 1$. 
\end{lemma}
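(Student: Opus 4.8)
The plan is to handle the two directions separately; the substance is in showing that $|G|\ge 3$ forces $1\in\Delta(H)$, and for this I would produce, for every such $G$, one element $a\in H$ with $\{2,3\}\subseteq\Lb_H(a)$. Since $2$ and $3$ are consecutive integers, this immediately yields $1=3-2\in\Delta(\Lb_H(a))\subseteq\Delta(H)$. All remaining assertions then follow from facts recalled above: $\Delta(H)\neq\emptyset$ completes the equivalence, $\min\Delta(H)=\gcd\Delta(H)=1$, and since $H$ is a divisor-closed submonoid of itself with $\Delta(H)\neq\emptyset$ one has $\min\Delta(H)\in\Delta^{\ast}(H)$, whence $\min\Delta^{\ast}(H)=1$.

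For the direction $|G|\le 2$: if $G$ is trivial then $H=\Fc(\{0\})$ is free on $\{0\}$; if $|G|=2$, say $G=\{0,g\}$, then $-g=g$ forces $\sigma_{\pm}(S)=\{\sigma(S)\}$ for every sequence $S$, so $H=\Bc_{\pm}(G)=\Bc(G)$, which is free on $\{0,g^{2}\}$. In either case $H$ is factorial, hence half-factorial, hence $\Delta(H)=\emptyset$; this is the part essentially contained in \cite{GHKZ}.

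Now let $|G|\ge 3$ and split according to the rank. If $\mathsf{r}(G)\ge 2$, choose nonzero independent $e_{1},e_{2}\in G$ and set $a=e_{1}^{2}e_{2}^{2}(e_{1}+e_{2})^{2}$. I would verify that $e_{1}^{2}$, $e_{2}^{2}$, $(e_{1}+e_{2})^{2}$ and $e_{1}e_{2}(e_{1}+e_{2})$ all lie in $\Ac(H)$: the three squares are atoms because $e_{1}$, $e_{2}$, $e_{1}+e_{2}$ are nonzero, while $e_{1}e_{2}(e_{1}+e_{2})\in\Bc_{\pm}(G)$ because $e_{1}+e_{2}-(e_{1}+e_{2})=0$, and it is an atom because independence of $e_{1},e_{2}$ keeps $0$ out of $\sigma_{\pm}(T)$ for each proper nonempty subsequence $T$ of it. Then $\bigl(e_{1}e_{2}(e_{1}+e_{2})\bigr)^{2}$ and $(e_{1}^{2})(e_{2}^{2})\bigl((e_{1}+e_{2})^{2}\bigr)$ are factorizations of $a$ of lengths $2$ and $3$, so $\{2,3\}\subseteq\Lb_H(a)$.

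If $\mathsf{r}(G)\le 1$, then $G\cong C_{n}$ with $n\ge 3$; fix a generator $h$, put $g=(n-2)h$ (so $g\neq 0$ and $g+2h=0$), and set $a=g^{2}h^{4}$. Here $g^{2}$ and $h^{2}$ are atoms since $g,h\neq 0$, and $gh^{2}\in\Bc_{\pm}(G)$ since $g+h+h=nh=0$; it is an atom because the only proper nonempty subsequences of $gh^{2}$ that lie in $\Bc_{\pm}(G)$ are $h^{2}$ (and, when $n=3$, also $gh=h^{2}$), whose complementary divisor in $gh^{2}$ is the single nonzero element $g\notin\Bc_{\pm}(G)$. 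Then $(gh^{2})(gh^{2})$ and $(g^{2})(h^{2})(h^{2})$ are factorizations of $a$ of lengths $2$ and $3$, so again $\{2,3\}\subseteq\Lb_H(a)$. The only genuinely technical step is this atom verification — showing that no proper subsequence of the displayed short sequences is a plus-minus weighted zero-sum sequence — which is routine but uses, respectively, the independence of $e_{1},e_{2}$ and the choice $g=(n-2)h$ with $g+2h=0$; I expect it to be the only, and mild, obstacle.
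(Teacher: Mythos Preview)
Your proof is correct. The paper's own argument is not a proof at all but a citation: the first assertion is Lemma~6.1 in \cite{GHKZ}, and the claim that $\min\Delta(H)=1$ is said to be implicit in that lemma's proof; the statement about $\min\Delta^{\ast}(H)$ is then deduced exactly as you do, by noting that $H$ is a divisor-closed submonoid of itself.

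Your route is therefore genuinely different in that it is self-contained: you explicitly produce an element $a$ with $\{2,3\}\subseteq\Lb_H(a)$ in each case, splitting according to whether $\mathsf{r}(G)\ge 2$ (using the triangle $e_1,e_2,e_1+e_2$) or $G$ is cyclic of order $n\ge 3$ (using $g=(n-2)h$ and $gh^2$). The verifications that $e_1e_2(e_1+e_2)$ and $gh^2$ are atoms are handled correctly; in particular you are careful to note that in $\Bc_{\pm}$ irreducibility does not require the absence of plus-minus weighted zero-sum subsequences, only the absence of a decomposition into two nonempty such sequences, which is exactly what rescues $gh^2$ despite $h^2\mid gh^2$. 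One cosmetic point: for $n=3$ your sequence $gh^2$ equals $h^3$, so $gh$ and $h^2$ are the same subsequence in $\Fc(G)$ rather than two distinct ones; your argument still goes through, but the parenthetical ``also $gh=h^2$'' is redundant rather than a separate case. What you gain is a proof that does not rely on having \cite{GHKZ} at hand; what the paper gains is brevity.
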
 
\begin{proof}
The first assertion is Lemma 6.1 in \cite{GHKZ}, that $\min \Delta   (H) = 1$ is implicit in its proof, and the assertion on $\min \Delta^\ast(H)$ follows as $H$ is a divisor-closed submonoid of itself. 
\end{proof}

We now give  a simple yet powerful lemma; the restriction that $0 \notin G_0$ is no actual restriction; 
since $0$ is a prime element of $\Bc_{\pm} (G)$, it is easy to see that  $\Bc_{\pm} (G_0)$ and $\Bc_{\pm} (G_0 \cup \{0\})$ have the same set of distances. 

\begin{lemma}
\label{lem_Delta_div}
Let $G$ be an abelian group and let $G_0 \subseteq G \setminus \{0\}$ be non-empty. Let $H = \Bc_{\pm} (G_0)$.
For each $A \in \mathcal{A}(H)$, we have that $\{2, |A|\} \subseteq \mathsf{L}(A^2)$. In particular, if $\Delta (H)\neq \emptyset $, then $\min \Delta (H) \mid |A|-2$. 
Moreover,  
\[\min \Delta (H) \mid  \gcd \{ |A|-2 \colon A \in \mathcal{A}(H)\} = \gcd \{ |A|- |A'| \colon A, A' \in \mathcal{A}(H)\}. \]
\end{lemma}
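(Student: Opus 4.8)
The plan is to first establish the claim for a single atom $A$, and then bootstrap to the gcd statement. For the single-atom claim, write $A = g_1 \cdots g_{|A|}$ with $g_i \in G_0$. Since $A \in \Bc_{\pm}(H)$, there are signs $\epsilon_i \in \{+1,-1\}$ with $\sum_i \epsilon_i g_i = 0$. Consider $A^2 = g_1 \cdots g_{|A|} \cdot g_1 \cdots g_{|A|}$. On one hand, $A^2 = A \cdot A$ exhibits a factorization of length $2$; since $A$ is an atom, this shows $2 \in \Lb(A^2)$. On the other hand, I claim that each $g_i g_i \in \Bc_{\pm}(H)$, because $g_i + (-1)g_i = 0$; moreover each such $g_ig_i$ is an atom, as any proper nonempty subsequence is just $g_i$, which is not a plus-minus weighted zero-sum sequence (here we use $g_i \neq 0$, guaranteed by $G_0 \subseteq G \setminus \{0\}$). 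Hence $A^2 = (g_1g_1)(g_2g_2)\cdots(g_{|A|}g_{|A|})$ is a factorization of length $|A|$, so $|A| \in \Lb(A^2)$. This proves $\{2, |A|\} \subseteq \Lb(A^2)$.

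Next, if $\Delta(H) \neq \emptyset$, then $\min\Delta(H) = \gcd\Delta(H)$ (recalled in the excerpt), and in particular $\min\Delta(H)$ divides the difference of any two lengths of any fixed element, so $\min\Delta(H) \mid |A| - 2$ for every $A \in \mathcal{A}(H)$. Taking the gcd over all atoms gives $\min\Delta(H) \mid \gcd\{|A| - 2 \colon A \in \mathcal{A}(H)\}$. For the final equality of the two gcd's, note that $\gcd\{|A| - |A'| \colon A, A' \in \mathcal{A}(H)\}$ divides $\gcd\{|A| - 2 \colon A \in \mathcal{A}(H)\}$ precisely when $2$ is a length of an atom — but for any fixed atom $A$, applying the first part to $A$ (equivalently, noting that whenever $G_0 \neq \emptyset$ there is at least one atom, and if $|A| = 2$ for some atom the statement is immediate) we get that $2 \in \Lb(A^2)$ and $|A| \in \Lb(A^2)$, so each $|A| - 2$ is itself a difference of two lengths of a single element, hence a $\Z$-linear combination handled by the set $\{|A|-|A'|\}$ as well; conversely each $|A| - |A'| = (|A| - 2) - (|A'| - 2)$ lies in the subgroup of $\Z$ generated by $\{|A|-2 \colon A \in \mathcal{A}(H)\}$. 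Since both sets of integers generate the same subgroup of $\Z$, their gcd's coincide.

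The only subtle point — and the one I would state carefully rather than the "main obstacle," since it is genuinely easy once spotted — is the verification that $g_ig_i$ is an atom of $H$, which is exactly where the hypothesis $0 \notin G_0$ is used: if $0$ were allowed, $0$ itself would be an atom of length $1$ and the divisibility $\min\Delta(H) \mid |A| - 1$ type statements would change. Everything else is a direct unwinding of the definitions of factorization length, set of lengths, and $\min\Delta(H) = \gcd\Delta(H)$. No transfer-homomorphism machinery or structural result beyond what is recalled in the excerpt is needed.
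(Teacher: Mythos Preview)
Your proof is correct and follows essentially the same approach as the paper: exhibit the two factorizations $A\cdot A$ and $g_1^2\cdots g_{|A|}^2$ of $A^2$, use $\min\Delta(H)=\gcd\Delta(H)$, and then pass to the gcd. The only point worth tightening is the argument for the equality of the two gcd's: rather than the detour through ``differences of two lengths of a single element,'' simply note (as the paper does, and as you hint at parenthetically) that for any $g\in G_0$ the sequence $g^2$ is an atom of length $2$, so every $|A|-2$ already lies in the set $\{|A|-|A'|\}$, while conversely $|A|-|A'|=(|A|-2)-(|A'|-2)$.
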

\begin{proof}
Let $A = g_1 \dots g_l \in  \mathcal{A}(H)$. Note that since $g_i \neq 0$, we have $l \ge 2$ and $g_i^2 \in \mathcal{A}(H)$ for each $i \in [1, l]$. 
Thus $A^2$ and  $g_1^2 \dots g_l^2$ are factorizations of the same element, the former has length $2$ and the latter length $l = |A|$.  
 While $|A|-2$ might not be in $\Delta(H)$ it is at least a sum of elements from $\Delta(H)$, and since $\min \Delta (H) = \gcd \Delta (H)$ it follows that  
$\min \Delta (H) \mid |A|-2$. 

Furthermore, as  $\min \Delta (H) \mid |A|-2$ for each $A \in \mathcal{A}(H)$, it divides also $\gcd \{ |A|-2 \colon A \in \mathcal{A}(H)\}$. 
Finally $\gcd \{ |A|-2 \colon A \in \mathcal{A}(H)\} = \gcd \{ |A|- |A'| \colon A, A' \in \mathcal{A}(H)\}$ holds as there is some $A'$ of length $2$ and by elementary properties of the greatest common divisor. 
\end{proof}

This argument is clearly specific to the case of plus-minus weighted zero-sum sequences. It is however possible to view it as a particular instance of a more general argument that would also encompass a related argument involving cross numbers for the the classical case, which we do not develop here.  

A natural question is to what extent $\min \Delta (H)$ and  $ \gcd \{ |A|-2 \colon A \in \mathcal{A}(H)\}$ can differ.
It turns out  that they are tightly linked. Before turning to this question we give a few additional general results. 
In particular, we characterize when $\Delta (H)$ is empty.

\begin{lemma} 
\label{lem_pm_hfdelta}
Let $G$ be an abelian group and let $G_0 \subseteq G$. Let $H = \Bc_{\pm} (G_0)$.
\begin{enumerate}
\item The monoid $H$ is half-factorial if and only if $\mathsf{D}(H) \le 2$. 
\item If $\Delta (H) \neq \emptyset$ and $\mathsf{D}(H)$ is finite,  we have $\min \Delta (H) \mid \mathsf{D}(H)-2$. 
\end{enumerate}
\end{lemma}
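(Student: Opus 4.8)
The plan is to establish both statements by carefully analyzing irreducible elements of small length, building on Lemma~\ref{lem_Delta_div}.

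For part~(1), first note that if $0 \in G_0$ then $0$ is a prime of $H$, so $\mathsf{D}(H) \ge 1$ trivially, and one can reduce to $G_0 \setminus \{0\}$ since adjoining a prime element changes neither half-factoriality nor the Davenport constant in the relevant way (as already observed in the excerpt before Lemma~\ref{lem_Delta_div}). So assume $G_0 \subseteq G \setminus \{0\}$. If $\mathsf{D}(H) \le 2$, then every atom $A$ satisfies $|A| \le 2$, and since $g \neq 0$ forces $|A| \ge 2$ for every atom (as $g$ alone is never a plus-minus zero-sum sequence when $g \neq 0$; here I should be slightly careful about elements of order~$2$, where $g^1$ still is not a $\pm$-zero-sum sequence since $\sigma_\pm(g) = \{g,-g\} = \{g\}$ and $g \neq 0$), in fact $|A| = 2$ for every atom. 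Then every element of $H$ of length $\ell$ in one factorization has all factorizations of length $\ell$ (count multiplicities: the total length $\sum_i |A_i| = 2k$ determines $k$), so $H$ is half-factorial. Conversely, if $\mathsf{D}(H) \ge 3$, pick an atom $A$ with $|A| = \mathsf{D}(H) \ge 3$; then Lemma~\ref{lem_Delta_div} gives $\{2, |A|\} \subseteq \mathsf{L}(A^2)$ with $|A| \ge 3 > 2$, so $\Delta(H) \neq \emptyset$ and $H$ is not half-factorial.

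For part~(2), assume $\Delta(H) \neq \emptyset$ and $\mathsf{D}(H) < \infty$. By part~(1) and its proof, $\Delta(H) \neq \emptyset$ forces the existence of an atom $A$ with $|A| = \mathsf{D}(H) \ge 3$. Applying the divisibility statement of Lemma~\ref{lem_Delta_div} to this particular $A$ gives immediately $\min\Delta(H) \mid |A| - 2 = \mathsf{D}(H) - 2$. (Again one must handle the case $0 \in G_0$ by passing to $G_0 \setminus \{0\}$, noting the sets of distances coincide and that $0$ contributes an atom of length~$1$ which would otherwise spoil the argument — this is exactly why Lemma~\ref{lem_Delta_div} was stated with the hypothesis $0 \notin G_0$.)

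The main obstacle I anticipate is not any deep combinatorial input but rather the bookkeeping around the element $0 \in G_0$ and around elements of order~$2$: one needs to be sure that when $0 \notin G_0$ every atom genuinely has length at least~$2$, and that the reduction $\Bc_{\pm}(G_0) \rightsquigarrow \Bc_{\pm}(G_0\setminus\{0\})$ preserves exactly the invariants in play (it does: removing a prime divisor-closed factor of $H \cong \Bc_{\pm}(G_0\setminus\{0\}) \times \mathcal{F}(\{0\})$ leaves $\Delta(H)$ and the truncated Davenport constant $\max\{|A| : A \in \mathcal{A}(H),\ |A| \ge 2\}$ unchanged). Everything else is a direct consequence of Lemma~\ref{lem_Delta_div} and the length-counting argument for half-factoriality.
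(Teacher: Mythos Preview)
Your proof is correct and follows essentially the same route as the paper: both directions of part~(1) and part~(2) are obtained directly from Lemma~\ref{lem_Delta_div} together with the observation that the only atom of length~$1$ is the prime element~$0$, so that (after removing~$0$) every atom has length exactly~$2$ when $\mathsf{D}(H)\le 2$ and a length-count forces half-factoriality. The paper's proof is terser---it does not spell out the reduction to $G_0\setminus\{0\}$ or the length-counting argument---but the content is the same.
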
 
\begin{proof}
By Lemma \ref{lem_Delta_div} it is immediate that $\mathsf{D}(H) > 2$ implies that $\Delta(H)$ is non-empty. 
To see the converse it suffices to note that the only element of $\Ac(H)$ of length $1$, is the prime element $0$. 

The second part also follows by Lemma \ref{lem_Delta_div} as there is an $A \in\Ac(H) $  with $|A| = \mathsf{D}(H)$. 
\end{proof}

This allows us to establish the following bound. 
 
\begin{corollary} 
\label{cor_delta_ast}
Let $G$ be an  abelian group and let $G_0 \subseteq G$. Let $H = \Bc_{\pm} (G_0)$.
We have $\max \Delta^{\ast} (H) \le \mathsf{D}(H)-2$. 
\end{corollary}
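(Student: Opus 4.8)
The plan is to unwind the definition of $\Delta^{\ast}(H)$ and reduce everything to an application of Lemma~\ref{lem_pm_hfdelta}(2) to the divisor-closed submonoids of $H$. By definition, every element of $\Delta^{\ast}(H)$ is of the form $\min\Delta(S)$ for some divisor-closed submonoid $S$ of $H$ with $\Delta(S)\neq\emptyset$, so it suffices to show $\min\Delta(S)\le\mathsf{D}(H)-2$ for each such $S$.

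First I would pin down the shape of $S$. Since divisor-closedness is transitive, $S$ is also a divisor-closed submonoid of $\Bc_{\pm}(G)$, so Lemma~\ref{lem_divclosed} gives $S=\Bc_{\pm}(G_1)$ for some $G_1\subseteq G$, and necessarily $G_1\subseteq G_0$. In particular $S$ is again a monoid of plus-minus weighted zero-sum sequences, so Lemma~\ref{lem_pm_hfdelta} is applicable to it. Next I would record the bound $\mathsf{D}(S)\le\mathsf{D}(H)$: as $S$ is divisor-closed in $H$ (and both monoids are reduced), every atom of $S$ is an atom of $H$, i.e.\ $\mathcal{A}(S)\subseteq\mathcal{A}(H)$, and $\mathsf{D}$ of an embedded monoid is the supremum of the lengths of its atoms.

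Then comes the short computation. If $\mathsf{D}(H)=\infty$ there is nothing to prove, so assume $\mathsf{D}(H)$, and hence $\mathsf{D}(S)$, is finite. Since $\Delta(S)\neq\emptyset$ the monoid $S$ is not half-factorial, so $\mathsf{D}(S)\ge 3$ by Lemma~\ref{lem_pm_hfdelta}(1); and Lemma~\ref{lem_pm_hfdelta}(2) gives $\min\Delta(S)\mid\mathsf{D}(S)-2$. As $\mathsf{D}(S)-2$ is then a positive integer, divisibility forces $\min\Delta(S)\le\mathsf{D}(S)-2\le\mathsf{D}(H)-2$, which is the desired bound; when $\mathsf{D}(H)$ is finite this also shows $\Delta^{\ast}(H)$ is a bounded, hence finite, subset of $\mathbb{N}$, so the maximum is literally attained.

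I do not expect a genuine obstacle here: the corollary is essentially a repackaging of Lemma~\ref{lem_pm_hfdelta}. The one point requiring care is that Lemma~\ref{lem_pm_hfdelta} speaks about monoids of the form $\Bc_{\pm}(G_0)$, so one must first invoke Lemma~\ref{lem_divclosed} (via transitivity of divisor-closedness) to know that the relevant divisor-closed submonoids are again of this shape, rather than trying to apply the lemma to $H$ directly. The only other minor step is the passage from the divisibility relation $\min\Delta(S)\mid\mathsf{D}(S)-2$ to the inequality, which uses $\mathsf{D}(S)\ge 3$ and hence relies on the half-factoriality criterion of Lemma~\ref{lem_pm_hfdelta}(1) together with the hypothesis $\Delta(S)\neq\emptyset$.
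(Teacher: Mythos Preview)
Your proof is correct and follows essentially the same approach as the paper: handle the infinite case trivially, then for each divisor-closed submonoid $S$ apply Lemma~\ref{lem_pm_hfdelta} to get $\min\Delta(S)\mid \mathsf{D}(S)-2$ and combine with $\mathsf{D}(S)\le\mathsf{D}(H)$. Your version is simply more explicit than the paper's, in particular in invoking Lemma~\ref{lem_divclosed} to justify that $S$ is again of the form $\Bc_{\pm}(G_1)$ and in using Lemma~\ref{lem_pm_hfdelta}(1) to ensure $\mathsf{D}(S)-2>0$ before passing from divisibility to the inequality.
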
 
\begin{proof}
If $\mathsf{D}(H)$ is infinite, the claim is trivial. Thus assume that $\mathsf{D}(H)$ is finite. 
By Lemma \ref{lem_pm_hfdelta} we know that $\min \Delta (H) \mid \mathsf{D}(H)-2$ and the same holds true for each divisor-closed submonoid $H'$ of $H$. 
It suffices to observe that  $\mathsf{D}(H') \le \mathsf{D}(H)$.  
\end{proof}

\begin{lemma}
\label{minDeltaH}
Let $G$ be an abelian group and let $G_0 \subseteq G \setminus \{0\}$ be non-empty. Let $H = \Bc_{\pm} (G_0)$. 
Then 
\[ \min \Delta (H) \mid \gcd \{ |A|-2 \colon A \in \mathcal{A}(H)\}  \mid 2 \min \Delta (H). \]
In particular, if $\gcd \{ |A|-2 \colon A \in \mathcal{A}(H)\}$ is odd, then  $\min \Delta (H) = \gcd \{ |A|-2 \colon A \in \mathcal{A}(H)\}$.   
\end{lemma}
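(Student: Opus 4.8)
The plan is as follows. The first divisibility $\min\Delta(H)\mid \gcd\{|A|-2\colon A\in\mathcal{A}(H)\}$ is already supplied by Lemma~\ref{lem_Delta_div}, so everything reduces to proving the second one; we may assume $\Delta(H)\neq\emptyset$, as otherwise $\min\Delta(H)$ is not defined. Abbreviate $g=\gcd\{|A|-2\colon A\in\mathcal{A}(H)\}$; by definition $|A|\equiv 2\pmod g$ for every atom $A$ of $H$.

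The key observation is that the sequence length is additive, i.e.\ $|\cdot|\colon\mathcal{F}(G_0)\to(\mathbb{N}_0,+)$ is a monoid homomorphism. Hence, for any $B\in H$ and any factorization $B=A_1\cdots A_k$ of $B$ into atoms, we get $|B|=\sum_{i=1}^k|A_i|\equiv 2k\pmod g$. Therefore $2k\equiv|B|\pmod g$ for every $k\in\mathsf{L}(B)$; that is, after multiplication by $2$ all lengths of a fixed element become congruent modulo $g$. In particular $g\mid 2(k-k')$ for all $k,k'\in\mathsf{L}(B)$ and all $B\in H$.

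Now fix $\delta\in\Delta(H)$. By definition there are some $B\in H$ and lengths $k,k'\in\mathsf{L}(B)$ with $k'-k=\delta$, so $g\mid 2\delta$. As this holds for every $\delta\in\Delta(H)$, $g$ divides $\gcd\{2\delta\colon\delta\in\Delta(H)\}=2\gcd\Delta(H)=2\min\Delta(H)$, using the recalled identity $\gcd\Delta(H)=\min\Delta(H)$. This establishes $\min\Delta(H)\mid g\mid 2\min\Delta(H)$. Finally, if $g$ is odd, then $g\mid 2\min\Delta(H)$ together with $\gcd(g,2)=1$ yields $g\mid\min\Delta(H)$, and combined with $\min\Delta(H)\mid g$ this gives $g=\min\Delta(H)$.

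I do not expect a genuine obstacle here: the whole argument rests on the single fact that sequence length is additive over factorizations while each atom has length $\equiv 2\pmod g$, which is exactly the mechanism already exploited in the proof of Lemma~\ref{lem_Delta_div} (there the two factorizations $A^2$ and $g_1^2\cdots g_{|A|}^2$ of $A^2$ have lengths $2$ and $|A|$, yet both represent a sequence of length $2|A|$). The only point requiring a word of care is the degenerate, half-factorial situation, but it is excluded automatically once $\Delta(H)\neq\emptyset$; the hypothesis $0\notin G_0$ moreover guarantees that every atom has length at least $2$, so the set $\{|A|-2\colon A\in\mathcal{A}(H)\}$ is a set of nonnegative integers and $g$ is well defined.
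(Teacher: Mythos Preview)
Your proof is correct and follows essentially the same approach as the paper's: both use the additivity of sequence length together with $|A|\equiv 2\pmod{g}$ for every atom $A$ to deduce that any two lengths of the same element become congruent modulo $g$ after doubling, yielding $g\mid 2\min\Delta(H)$. The only cosmetic difference is that the paper picks a single element $B$ with two factorization lengths differing by exactly $\min\Delta(H)$, whereas you establish $g\mid 2\delta$ for every $\delta\in\Delta(H)$ and then pass through $\gcd\Delta(H)=\min\Delta(H)$.
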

\begin{proof}
The first divisibility relation is just Lemma \ref{lem_Delta_div}. We need to show that $d=  \gcd \{ |A|-2 \colon A \in \mathcal{A}(H)\}  \mid 2 \min \Delta (H)$.
Let $B \in H$ such that $B$ has a factorization of length $k$ and of length $l = k + \min \Delta (H)$, 
say $B = A_1 \dots A_k = C_1 \dots C_l$  with $A_i, C_j \in \mathcal{A}(H)$.

As $\sum_{i = 1}^k  |A_i| = \sum_{j = 1}^l  |C_j|$, we have $\sum_{i = 1}^k  |A_i| \equiv \sum_{j = 1}^l  |C_j| \pmod{d}$. 
Now, essentially by the definition of $d$, the length of each $A \in \mathcal{A}(H)$  is congruent to $2$ modulo $d$. 
Therefore $2 k \equiv 2 l \pmod{d}$, which implies $2 \min \Delta (H) \equiv 0 \pmod{d}$ as claimed. 
Of course in case $d$ is odd, this further yields $\Delta (H) \equiv 0 \pmod{d}$.
\end{proof}  

It might be interesting to observe in case $\gcd \{ |A|-2 \colon A \in \mathcal{A}(H)\}$ is even, there actually can be a discrepancy. 
A simple example can be obtained by considering elementary $2$-groups. Recall that in this case, plus-minus weighted zero-sum sequences are just classical zero-sum sequences. 

\begin{example}
Let $ G = C_2^4 = \langle e_1, e_2, e_3, e_4 \rangle$. Let $G_0  = e_1 + \langle e_2, e_3, e_4 \rangle$ and $H = \Bc_{\pm} (G_0)$.
Then $\min \Delta (H) = 1$ while  $\gcd \{ |A|-2 \colon A \in \mathcal{A}(H)\}=2$. 
\end{example}
To see that this is indeed the case, it suffices to note on the one hand that $|A|$ is necessarily even for $A \in \mathcal{A}(H)$, which is clear considering the projection to $\langle e_1 \rangle$. On the other hand, considering $A_0= (e_1+e_2 + e_3+ e_4)(e_1+e_2)(e_1 + e_3)(e_1+e_4)$, $A_1 = (e_1 + e_2 + e_3 + e_4)(e_1 + e_2)(e_1 + e_3)e_1$ and $A_2= (e_1 + e_2 + e_3 + e_4)(e_1 + e_2+e_3)(e_1 + e_4)e_1$, one has $A_1A_2 = A_0  \cdot e_1^2  \cdot (e_1 + e_2 + e_3 + e_4)^2$, which yields a distance of $1$. 

Furthermore, this is not an isolated phenomenon and not limited to the minimal distance $1$. For instance \cite[Theorem 5.6]{WHF} yields examples of other minimal distances for elementary $2$-groups.

We recall that if $G_1 \subseteq G_0$,  then  $\min \Delta ( \Bc_{\pm}(G_0)) \mid \min \Delta ( \Bc_{\pm}(G_1))$. 

In view of this studying minimal distances for small sets is of particular relevance. The following lemma gives a complete answer for singletons. 
The result depends on the parity of the order of the element.

\begin{lemma} 
\label{lem_onegenerated_ar}
Let $G$ be an abelian group. Let $g \in G \setminus \{0\}$ and let $H = \Bc_{\pm} (\{g\})$.
\begin{enumerate} 
\item If the order of $g$ is even or infinite, then $H$ is isomorphic to $(\N_0,  +)$. In particular, $\Delta (H)= \emptyset $.  
\item If the order of $g$ is odd, then $H$ is isomorphic to the numerical monoid $ \langle 2, \ord (g) \rangle \subseteq (\N_0,  +)$. In particular, $\Delta (H)= \{\ord(g) - 2\} $.  
\end{enumerate} 
\end{lemma}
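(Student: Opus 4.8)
The plan is to pass to the free monoid $\Fc(\{g\})$, which is canonically isomorphic to $(\N_0,+)$ via $g^n \mapsto n$, and to describe $H$ as the submonoid corresponding to a subset $S \subseteq \N_0$. Concretely, $g^n \in H$ exactly when $0 \in \sigma_{\pm}(g^n)$, and since $\sigma_{\pm}(g^n) = \{(a-b)g : a,b \in \N_0,\ a+b = n\}$ (write $a$ for the number of $+1$'s and $b$ for the number of $-1$'s), this reads: $n \in S$ if and only if there exist $a,b \in \N_0$ with $a+b=n$ and $(a-b)g = 0$. Everything then reduces to identifying $S$ and computing the invariants of the resulting submonoid of $\N_0$.

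First I would dispatch the case where $\ord(g)$ is even or infinite. If $\ord(g)$ is infinite, $(a-b)g=0$ forces $a=b$, so $n \in S$ if and only if $n=2a$ is even. If $\ord(g)=d$ is even, then $d \mid a-b$ together with $a-b \equiv n \pmod 2$ (using $a+b=n$) forces $n$ even, while conversely $a=b=n/2$ works; hence again $S = 2\N_0$. In both cases $g^n \mapsto n/2$ is an isomorphism $H \xrightarrow{\sim} (\N_0,+)$, which gives (1); and $(\N_0,+)$ is factorial, in particular half-factorial, so $\Delta(H) = \emptyset$.

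For (2), with $d = \ord(g)$ odd (so $d \ge 3$ since $g \neq 0$), I claim $S = \langle 2,d\rangle$. If $n \in S$, set $c = \min\{a,b\}$ and $m = |a-b|/d \in \N_0$ (an integer because $d \mid a-b$); then $n = a+b = 2c + md \in \langle 2,d\rangle$. Conversely, $2c+md \in S$ by taking $c+md$ plus-signs and $c$ minus-signs. Thus $H \cong \langle 2,d\rangle$ via $g^n \mapsto n$. It remains to compute $\Delta(\langle 2,d\rangle)$: one checks that the atoms are exactly $2$ and $d$ (every even element $\ge 4$ and every odd element $>d$ is a sum of two smaller nonzero elements), and that the factorizations of a given $n \in \langle 2,d\rangle$ are precisely the words $2^a d^b$ with $0 \le b$, $db \le n$, $b \equiv n \pmod 2$ and $a=(n-db)/2$. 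Replacing $b$ by $b+2$ decreases the length by $d-2$, so each set of lengths of $\langle 2,d\rangle$ is an arithmetic progression with difference $d-2$; since $\Lb(2d) = \{2,d\}$ is a genuine two-element set, we get $\Delta(H) = \Delta(\langle 2,d\rangle) = \{d-2\} = \{\ord(g)-2\}$.

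There is no real obstacle here; the argument is elementary. The only point requiring a little care is the last one — identifying the atoms of the numerical monoid $\langle 2,d\rangle$ and verifying that all of its sets of lengths are arithmetic progressions of difference $d-2$ — but this is a routine (and classical) computation for a two-generated numerical monoid.
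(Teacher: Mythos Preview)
Your proof is correct and follows essentially the same approach as the paper: both identify $H$ with a submonoid of $(\N_0,+)$ via the length (multiplicity of $g$), and both reduce the computation of $\Delta$ to the numerical monoid $\langle 2,d\rangle$. The only cosmetic difference is order of presentation---the paper first determines the atoms $g^2$ (and $g^{\ord(g)}$ in the odd case) and then reads off the isomorphism, whereas you first describe the image submonoid $S\subseteq\N_0$ and then identify its atoms; your explicit verification that every set of lengths in $\langle 2,d\rangle$ is an arithmetic progression of difference $d-2$ is a bit more self-contained than the paper's appeal to the single relation $(g^2)^{\ord(g)}=(g^{\ord(g)})^2$ or to the literature on numerical monoids.
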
 
\begin{proof} 
We determine the irreducible elements in the respective monoids. 
In case the order of $g$ is infinite, it is easy to see that $g^2$ is the only plus-minus weighted minimal zero-sum sequence over $\{g\}$. 
This is also the case when the order of $g$ is even; just note that $g^{\ord(g)}$ is not a plus-minus weighted minimal zero-sum sequence if $\ord(g)$ is even and not $2$. 

Thus in these two cases, the monoid has a unique irreducible element and thus is isomorphic to $(\N_0,  +)$.  
By contrast, if $\ord(g)$ is odd, then there are two  plus-minus weighted minimal zero-sum sequences, namely $g^2$ and $g^{\ord(g)}$; while the latter does have proper subsequences that are also plus-minus weighted zero-sum sequences it cannot be decomposed into two nonempty plus-minus weighted zero-sum sequences.

The isomorphism to $ \langle 2, \ord (g) \rangle$ is obtained by considering the multiplicity of $g$, equivalently the length of the sequence. 
The result on the set of distances follows noting that the only minimal relation among the irreducibles is $(g^2)^{\ord(g)} = (g^{\ord(g)})^2$ or by invoking results on distances in numerical monoids (see \cite{minDeltaNum}).
\end{proof} 

\begin{lemma} 
\label{lem_delta}
Let $G$ be an abelian group and let $G_0 \subseteq G$. Let $H = \Bc_{\pm} (G_0)$.
We have $\{ \ord (g)  - 2\colon g \in G_0, \, \ord (g) \ge 3 \text{ odd}\} \subseteq \Delta (H) $. 
\end{lemma}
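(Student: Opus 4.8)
The plan is to derive the inclusion from Lemma~\ref{lem_onegenerated_ar} by restricting to a one-generated divisor-closed submonoid. If $G_0$ contains no element of odd order at least $3$, the asserted set is empty and there is nothing to prove, so fix $g \in G_0$ with $n := \ord(g) \ge 3$ odd; it suffices to show $n-2 \in \Delta(H)$.

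First I would check that $\Bc_{\pm}(\{g\})$ is a divisor-closed submonoid of $H$. The quickest route is to note that any divisor in $\Bc_{\pm}(G)$ of a sequence supported on $\{g\}$ is itself a subsequence supported on $\{g\}$, hence lies in $\Bc_{\pm}(\{g\})$; equivalently, by Lemma~\ref{lem_divclosed} both $\Bc_{\pm}(\{g\})$ and $H$ are divisor-closed submonoids of $\Bc_{\pm}(G)$ and $\Bc_{\pm}(\{g\}) \subseteq H$, so $\Bc_{\pm}(\{g\})$ is divisor-closed in $H$ as well.

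Now I would apply Lemma~\ref{lem_onegenerated_ar}~(2): since $\ord(g)$ is odd, $\Bc_{\pm}(\{g\})$ is isomorphic to the numerical monoid $\langle 2, n \rangle$ and $\Delta(\Bc_{\pm}(\{g\})) = \{n-2\}$. By the elementary fact recalled in Section~\ref{sec_SSL} that $\Delta(S) \subseteq \Delta(H)$ for every divisor-closed submonoid $S$ of $H$, we get $n-2 \in \Delta(H)$. Letting $g$ range over all elements of $G_0$ of odd order at least $3$ then gives the claimed inclusion.

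The argument involves no genuine difficulty; the only subtle point is that divisor-closedness must be understood relative to $H$ rather than to $\Bc_{\pm}(G)$, which is handled by the observation in the second paragraph. As an alternative one could avoid Lemma~\ref{lem_onegenerated_ar} altogether and argue by hand that $g^{2n}$ has set of lengths $\{2,n\}$ in $H$: both $g^2$ and $g^n$ are atoms, every factor of $g^{2n}$ is a power of $g$, and the equation $2a+nb=2n$ forces $b \in \{0,2\}$ since $n$ is odd, so $\Delta(\Lb_H(g^{2n})) = \{n-2\}$.
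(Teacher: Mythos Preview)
Your proof is correct and follows essentially the same approach as the paper: restrict to the divisor-closed submonoid $S = \Bc_{\pm}(\{g\})$, invoke Lemma~\ref{lem_onegenerated_ar} to get $\Delta(S) = \{\ord(g)-2\}$, and use $\Delta(S) \subseteq \Delta(H)$. Your added justification of divisor-closedness and the alternative direct computation of $\Lb_H(g^{2n})$ are correct but not needed beyond what the paper does.
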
 
\begin{proof} 
Let $g \in G_0$ be a nonzero element of odd order. Let $S = \Bc_{\pm} (\{g\})$. Then $S \subseteq H$ is a divisorclosed submonoid and thus $\Delta (S) \subseteq \Delta (H)$.  By Lemma \ref{lem_onegenerated_ar} we  know that $\Delta (S)= \{\ord (g) - 2\}$ and the claim follows.
\end{proof}

\subsection{Groups of odd order}

In the following result we obtain quite precise information on $\Delta^{\ast} (H)$ for  $H = \Bc_{\pm} (G)$ where $G$ is a group of odd order.

\begin{theorem} 
\label{DeltaAstOdd}
Let $G$ be a finite abelian group exponent $n$ and let $H = \Bc_{\pm} (G)$. 
Assume that $n$ is odd and at least $3$. 
Let $D_1 = \{ d - 2 \colon d \mid n, \, d \ge 3 \}$ and let $D_2 = \{ d'  \colon d' \mid d, \, d \in D_1 \}$.
Then $D_1 \subseteq \Delta^{\ast}   (H) \subseteq D_2$. 
In particular, $\max \Delta^{\ast}   (H) = n - 2$. 
\end{theorem}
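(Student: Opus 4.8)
The plan is to establish the two inclusions separately, the lower bound $D_1 \subseteq \Delta^{\ast}(H)$ being essentially already in hand and the upper bound $\Delta^{\ast}(H) \subseteq D_2$ requiring the real work. For the lower bound: given $d \mid n$ with $d \ge 3$, pick $g \in G$ of order $d$ (such an element exists since $d \mid \exp(G)$), and apply Lemma~\ref{lem_delta}, or rather directly Lemma~\ref{lem_onegenerated_ar} together with Lemma~\ref{lem_divclosed}: the divisor-closed submonoid $\Bc_{\pm}(\{g\})$ has $\Delta(\Bc_{\pm}(\{g\})) = \{d-2\}$, so $d-2 = \min \Delta(\Bc_{\pm}(\{g\})) \in \Delta^{\ast}(H)$. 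This gives $D_1 \subseteq \Delta^{\ast}(H)$. Since $\exp(G) = n$ divides itself and $n \ge 3$, we get $n-2 \in D_1 \subseteq \Delta^{\ast}(H)$, and by Corollary~\ref{cor_delta_ast} together with the fact (recalled in the preliminaries) that $\mathsf{D}(H) \le \mathsf{D}(G) \le |G|$ — more precisely we need $\max\Delta^{\ast}(H) \le \mathsf{D}(H) - 2$ and a matching upper bound on $\mathsf{D}(H)$ — we conclude $\max\Delta^{\ast}(H) = n-2$ once the upper inclusion is proven, since every element of $D_2$ is a divisor of some $d-2 \le n-2$. Actually the cleanest route to "$\max\Delta^{\ast}(H) = n-2$" is: the value $n-2$ is attained by the lower bound, and no element of $\Delta^{\ast}(H)$ exceeds $n-2$ because, by the upper inclusion $\Delta^{\ast}(H) \subseteq D_2$, every such element divides some $d-2$ with $d \mid n$, hence is $\le n - 2$.

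The heart of the matter is the upper inclusion $\Delta^{\ast}(H) \subseteq D_2$. By Lemma~\ref{lem_divclosed}, every divisor-closed submonoid of $H$ is of the form $\Bc_{\pm}(G_0)$ for some $G_0 \subseteq G$; we may assume $0 \notin G_0$ since adjoining $0$ does not change the set of distances. So we must show: for every $G_0 \subseteq G \setminus \{0\}$ with $\Delta(\Bc_{\pm}(G_0)) \ne \emptyset$, the integer $\min\Delta(\Bc_{\pm}(G_0))$ divides $d - 2$ for some divisor $d$ of $n$ with $d \ge 3$. By Lemma~\ref{minDeltaH} we know $\min\Delta(\Bc_{\pm}(G_0)) \mid \gcd\{|A|-2 : A \in \mathcal{A}(\Bc_{\pm}(G_0))\} \mid 2\min\Delta(\Bc_{\pm}(G_0))$; and since $n$ is odd, for any element $g \in G_0$ the order of $g$ is odd, so by Lemma~\ref{lem_onegenerated_ar} the atom $g^{\ord(g)}$ lies in $\mathcal{A}(\Bc_{\pm}(G_0))$ and has length $\ord(g)$, which is odd, so $|g^{\ord(g)}| - 2 = \ord(g) - 2$ is odd. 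Therefore $\gcd\{|A|-2\}$ is odd, and by the last sentence of Lemma~\ref{minDeltaH} we get $\min\Delta(\Bc_{\pm}(G_0)) = \gcd\{|A|-2 : A \in \mathcal{A}(\Bc_{\pm}(G_0))\}$. In particular this minimal distance divides $\ord(g) - 2$ for any $g \in G_0$. Since $\ord(g) \mid \exp(G) = n$ and $\ord(g) \ge 3$ (as $g \ne 0$ and $\ord(g)$ is odd), we obtain $\min\Delta(\Bc_{\pm}(G_0)) \mid \ord(g) - 2 \in D_1$, whence $\min\Delta(\Bc_{\pm}(G_0)) \in D_2$. This completes the upper inclusion.

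The main obstacle — or rather the main thing to get right — is the parity argument: it is precisely the oddness of $n$ that forces $\gcd\{|A|-2\}$ to be odd (every generator $g \in G_0$ contributes the atom $g^{\ord(g)}$ of odd length) and hence, via Lemma~\ref{minDeltaH}, collapses the potential factor-of-$2$ discrepancy between $\min\Delta$ and $\gcd\{|A|-2\}$. The Example immediately preceding the theorem shows this discrepancy is genuine in the even case, so this is exactly where the hypothesis is used. One should double-check the edge cases: that $G_0 \ne \emptyset$ with $\Delta \ne \emptyset$ forces $G_0$ to contain an element of order $\ge 3$ (an element of order $1$ is just $0$, excluded; there are no elements of even order; and if every element had order... well, $n \ge 3$ odd guarantees elements of order $\ge 3$ exist in $G$, but we need one in $G_0$ — this is automatic since any nonzero element of $G_0$ has odd order $\ge 3$), and that the case $\Bc_{\pm}(G_0)$ half-factorial (empty $\Delta$) is correctly excluded by the definition of $\Delta^{\ast}$.
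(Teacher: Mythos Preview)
Your proof is correct and follows essentially the same strategy as the paper: for the lower inclusion, pick $g$ of order $d$ and use Lemma~\ref{lem_onegenerated_ar}; for the upper inclusion, reduce to $\Bc_{\pm}(G_0)$ via Lemma~\ref{lem_divclosed}, pick any nonzero $g\in G_0$, and observe that $\min\Delta(\Bc_{\pm}(G_0))$ divides $\ord(g)-2\in D_1$.

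The one noteworthy difference is that you take a detour through Lemma~\ref{minDeltaH} and the parity argument to establish the \emph{equality} $\min\Delta(\Bc_{\pm}(G_0)) = \gcd\{|A|-2 : A\in\mathcal{A}\}$, whereas the paper simply invokes the general fact (recalled in Section~\ref{sec_SSL}) that $\min\Delta(\Bc_{\pm}(G_0)) \mid \min\Delta(\Bc_{\pm}(\{g\})) = \ord(g)-2$ because $\Bc_{\pm}(\{g\})$ is a divisor-closed submonoid. Your route is correct and yields the stronger intermediate statement, but the equality is not needed here: the bare divisibility $\min\Delta \mid |A|-2$ from Lemma~\ref{lem_Delta_div} (applied to the atom $g^{\ord(g)}$) already suffices, and the oddness of $n$ enters only to guarantee that $g^{\ord(g)}$ is an atom and that $\ord(g)\ge 3$. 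So your ``main obstacle'' paragraph overstates the role of parity in this particular proof; the real payoff of the parity observation comes later, in Theorem~\ref{ParityDelta}.
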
 
\begin{proof} 
To show that $D_1 \subseteq \Delta^\ast (H)$ it suffices to note that for each $d \mid n$ there exists an element $g \in  G$ of order $d$ and to invoke Lemma \ref{lem_delta}. To see that $\Delta^\ast (H) \subseteq D_2$, let $S \subseteq H$ be a divisor-closed submonoid, that is, $S = \Bc_{\pm} (G_0) $ for some $G_0 \subseteq G$, see Lemma \ref{lem_divclosed}. If $\Delta (S) \neq \emptyset$, which is the only case that is relevant, then $G_0$ contains a nonzero element $g$ whose order, denote it $d$, divides $n$ and thus is odd. By Lemma \ref{lem_onegenerated_ar} we have $\Delta (\Bc_{\pm} (\{g\})) = \{d - 2\}$. 
Since $\min \Delta (S) \mid \min \Delta (\Bc_{\pm} (\{g\}))  = d - 2$, we have $\min \Delta (S) \in D_2$, establishing the claim. 
\end{proof} 

The description is not always precise. In the following results we pursue the investigations of this.  First, we note that in certain cases the description in fact is complete.

\begin{corollary}
Let $n  \in \mathbb{N}$ such that $n$ and $n-2$ are prime, that is $n$ is the larger one of a couple of  twin primes.  For $H =  \Bc_{\pm} (C_n)$, we have $\Delta^{\ast}   (H) = \{1, n - 2\}$.   
\end{corollary}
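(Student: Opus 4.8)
The plan is to derive the statement directly from Theorem \ref{DeltaAstOdd}, after checking that the twin-prime hypothesis puts us squarely in its range of applicability. First I would observe that if $n$ and $n-2$ are both prime, then $n \neq 2$ (otherwise $n-2 = 0$ is not prime), so $n$ is odd; and $n \neq 3$ (otherwise $n - 2 = 1$ is not prime), so $n \ge 5$. In particular $n$ is odd and at least $3$, $C_n$ has exponent $n$, so Theorem \ref{DeltaAstOdd} applies and gives $D_1 \subseteq \Delta^{\ast}(H) \subseteq D_2$ with $D_1 = \{ d-2 \colon d \mid n,\, d \ge 3\}$ and $D_2 = \{ d' \colon d' \mid d,\, d \in D_1\}$.

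Next I would compute $D_1$ and $D_2$ explicitly using primality. Since $n$ is prime its only divisors are $1$ and $n$, and because $n \ge 5 > 3$ the condition $d \mid n$, $d \ge 3$ forces $d = n$; hence $D_1 = \{n-2\}$. Then $D_2 = \{ d' \colon d' \mid n-2\}$, and since $n-2$ is prime this is exactly $\{1, n-2\}$. So Theorem \ref{DeltaAstOdd} yields the two-sided inclusion $\{n-2\} \subseteq \Delta^{\ast}(H) \subseteq \{1, n-2\}$, and the two listed elements are genuinely distinct because $n - 2 \ge 3 > 1$.

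It then remains only to settle whether $1 \in \Delta^{\ast}(H)$, and this is immediate from Lemma \ref{min=1}: as $|C_n| = n \ge 5 \ge 3$, we have $1 = \min \Delta(H) = \min \Delta^{\ast}(H)$, so in particular $1 \in \Delta^{\ast}(H)$. Combining this with the inclusions from the previous paragraph gives $\Delta^{\ast}(H) = \{1, n-2\}$, as claimed.

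There is essentially no obstacle in this argument; the corollary is a clean specialization of Theorem \ref{DeltaAstOdd}, and the only mildly delicate point is the elementary bookkeeping that the twin-prime hypothesis simultaneously places $n$ in the admissible range of the theorem and collapses both $D_1$ and $D_2$ to the small explicit sets above. If one wanted to avoid invoking Lemma \ref{min=1}, one could instead note that $\Bc_{\pm}(\{g\})$ for any $g$ of order $n$ is contained in a suitable divisor-closed submonoid witnessing the minimal distance $1$, but quoting Lemma \ref{min=1} is shorter.
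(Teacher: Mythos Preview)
Your proof is correct and follows essentially the same approach as the paper: apply Theorem~\ref{DeltaAstOdd}, use primality of $n$ and $n-2$ to compute $D_2 = \{1, n-2\}$, and invoke Lemma~\ref{min=1} for the element $1$. Your version is more careful about verifying the hypotheses ($n \ge 5$, $n$ odd), which the paper leaves implicit; the only minor quibble is that your closing aside about avoiding Lemma~\ref{min=1} via $\Bc_{\pm}(\{g\})$ is muddled (that submonoid has minimal distance $n-2$, not $1$), but this does not affect the main argument.
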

\begin{proof}
Since $n$ is prime each non-zero element $C_n$ has order $n$. Since $n-2$ is prime as well, using the notation of Theorem \ref{DeltaAstOdd}, we have $D_2 = \{1, n-2\}$. 
Since by Lemma \ref{min=1} we also have $1 \in \Delta^{\ast}   (H)$, the claim is established.  
\end{proof}

However, if $n-2$ is not prime, we do not know which divisors  of $n-2$ are contained in $\Delta^{\ast} (H)$.  
We show that at least sometimes such divisors occur as a minimal distance. 

\begin{lemma}
Let $n \in \mathbb{N}$ be  odd and of the form $n=a^2 +1$ for $a \in \mathbb{N}$. For $H =  \Bc_{\pm} (C_n)$, we have $a-1 \in \Delta^{\ast}   (H)$.   
\end{lemma}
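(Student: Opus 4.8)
The plan is to produce a divisor-closed submonoid $S$ of $H$ with $\min\Delta(S) = a - 1$; then $a-1 \in \Delta^{\ast}(H)$ by definition. Since $n = a^2+1$ is odd, $a$ is even, and $a \ge 2$ because $n \ge 3$; note $n - 2 = (a-1)(a+1)$, so $a-1$ is among the divisors of $n-2$ and hence a legitimate candidate in view of Theorem \ref{DeltaAstOdd}. Fix a generator $e$ of $C_n$. As $\gcd(a, a^2+1) = 1$, both $e$ and $ae$ have order $n$, and I put $G_0 = \{e, ae\}$, $S = \Bc_\pm(G_0)$, a divisor-closed submonoid by Lemma \ref{lem_divclosed}. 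Throughout I write a sequence over $G_0$ as $e^k(ae)^m$.

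For the divisibility $\min\Delta(S) \mid a-1$ it suffices, by Lemma \ref{lem_Delta_div}, to exhibit two atoms of $S$ whose lengths are $a+1$ and $n$. The sequence $e^a(ae)$ is a plus-minus weighted zero-sum (give every $e$ the sign $-1$ and $ae$ the sign $+1$), and a short check shows that its only non-empty plus-minus weighted zero-sum proper subsequences are $e^{2}, e^{4}, \dots, e^{a}$, none of which has a plus-minus weighted zero-sum complement in $e^a(ae)$; hence $e^a(ae)$ is an atom of length $a+1$. Also $e^n$ is an atom of $\Bc_\pm(\{e\})$ by Lemma \ref{lem_onegenerated_ar}, and since this is divisor-closed in $S$ it follows that $e^n$ is an atom of $S$. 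Lemma \ref{lem_Delta_div} now yields $\min\Delta(S) \mid \gcd(a-1,\, n-2) = a-1$.

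For the reverse divisibility I would prove that every atom $A$ of $S$ satisfies $|A| \equiv 2 \pmod{a-1}$. Granting this, for $B \in S$ with factorisations $z, z'$ one gets $|B| \equiv 2|z| \equiv 2|z'| \pmod{a-1}$, hence $|z| \equiv |z'| \pmod{a-1}$ as $a-1$ is odd; so $a-1$ divides every element of $\Delta(S)$, and since $\Delta(S)\neq\emptyset$ (it contains $n-2$ by Lemma \ref{lem_delta}) we conclude $(a-1)\mid\min\Delta(S)$, whence $\min\Delta(S) = a-1$ and $a-1\in\Delta^{\ast}(H)$.

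To prove the congruence, write $A = e^k(ae)^m$; being a plus-minus weighted zero-sum it has a witness, i.e.\ integers $u, v$ with $|u|\le k$, $|v|\le m$, $u\equiv k$, $v\equiv m\pmod 2$ and $u + av = cn$ for some $c\in\Z$ (here I use $a^2 \equiv -1 \pmod n$ freely). Atom-minimality gives leverage: if $k\ge 2$ then every witness has $|u| = k$ (otherwise $A = (e^2)\,(e^{k-2}(ae)^m)$ splits into two plus-minus weighted zero-sums), symmetrically $m\ge 2$ forces $|v| = m$; and one cannot split off $e^n$, $(ae)^n$, $e^a(ae)$ or $e(ae)^a$, which rules out the corresponding shifts of a witness and in particular yields $k, m < n$. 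If $k\le 1$ or $m\le 1$ a direct computation shows $A$ is one of $e^2, e^n, (ae)^2, (ae)^n, e^a(ae), e(ae)^a, e^{\,n-a}(ae), e(ae)^{\,n-a}$, of length $2$, $n$, $a+1$ or $n-a+1$, each $\equiv 2\pmod{a-1}$. If $k, m\ge 2$, then $|u| = k$, $|v| = m$ force the witness to be $(\epsilon k, \eta m)$, unique up to sign; $c = 0$ would give $k = am$ and $A = (e^a(ae))^m$, not an atom, so $c\neq 0$, and after negating the witness we may take $c\ge 1$. The pattern $(\epsilon,\eta)=(-1,-1)$ is then impossible, and $(1,-1)$ is impossible because $k < n$; in the pattern $(-1,1)$ the monoid automorphism of $S$ induced by multiplication by $a$ (it interchanges $e$ and $ae$ up to sign, hence sends $A$ to the atom $e^m(ae)^k$ of the same length) converts $A$ into an atom with witness pattern $(1,1)$, so it suffices to treat $(1,1)$. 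There $cn = k + am$; since $e(ae)^a$ cannot be split off $A$ (the cofactor $e^{k-1}(ae)^{m-a}$ would otherwise be a plus-minus weighted zero-sum) one gets $m \le a-1$, and then $k + m \le \mathsf{D}(C_n) = n$ forces $cn = k+am \le n + (a-1)m \le n + (a-1)^2 < 2n$, so $c = 1$; hence $k + m = n - (a-1)m \equiv n \equiv 2 \pmod{a-1}$, as desired. The heart of the proof, and its main obstacle, is precisely this last case $k, m \ge 2$: the reductions that eliminate ``unremovable'' sub-configurations must be performed while respecting the parity constraints $u\equiv k$, $v\equiv m\pmod 2$, and it is this that makes bounding the multiplier $c$ and discarding the mixed-sign witnesses delicate — the automorphism swapping the two generators being the device that keeps the case analysis finite.
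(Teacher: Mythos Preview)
Your proposal is correct and follows the same overall strategy as the paper: both take $G_0=\{e,ae\}$, set $S=\Bc_\pm(G_0)$, and show $\min\Delta(S)=a-1$, relying on the key observation that in an atom $e^k(ae)^m$ with $k,m\ge 2$ every witness must have all signs on $e$ equal and all signs on $ae$ equal (your statement ``$|u|=k$, $|v|=m$''). The difference is in execution. The paper, having reduced to the two equations $v+wa\equiv 0$ and $v-wa\equiv 0\pmod n$, simply \emph{enumerates} all atoms explicitly: in the first case $v=n-wa$ for $w\in[1,a]$, in the second $v=a-k$, $w=1+ka$ for $k\in[1,a-1]$; the resulting lengths $n-(a-1)w$ and $a+1+k(a-1)$ are visibly $\equiv 2\pmod{a-1}$, so $\gcd\{|A|-2\}=a-1$, and since $a-1$ is odd Lemma~\ref{minDeltaH} finishes. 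You instead avoid the full enumeration by a case split on the sign pattern, using the automorphism induced by multiplication by $a$ to reduce $(-1,1)$ to $(1,1)$, then the ``cannot split off $e(ae)^a$'' argument and the Davenport bound $k+m\le n$ to force $c=1$ and hence $|A|\equiv 2\pmod{a-1}$. Both routes are valid; the paper's direct listing is shorter and more transparent, while your bounding argument is a bit heavier but would scale better in situations where explicit enumeration is infeasible.
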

\begin{proof}
Let $C_n = \langle e \rangle$ and let $G_0 = \{e, ae\}$. We show that for $H' =\Bc_{\pm}(G_0) $  we have $\min \Delta (H')= a-1$. 
By Lemma \ref{minDeltaH} it suffices to study the length of the elements of $\mathcal{A}(H')$.  

We first recall that there are two elements of $\mathcal{A}(H')$ containing only $e$, namely $e^2$ and $e^n$. Likewise there are two elements  containing only $ae$, namely $(ae)^2$ and $(ae)^n$; 
note that the order of $ae$ is $n$, since $a$ and $n$ are co-prime. 

Now assume that $e^v (ae)^w$, with $v, w \in \N$ is in $\mathcal{A}(H')$.

We observe that due to the minimality in a plus-minus weighted zero-sum of this sequence all the weights of $e$ are equal and likewise all the weights of $ae$ are equal. 
This means that either $ve + wae = 0$ or $ve + w(-a)e = 0$. 
In the former case $v = n-wa$ for $w$ from $1$ to $a$ (note that $n - a^2 = 1$) yields indeed minimal plus-minus weighted zero-sum sequences, 
while for $w>a$ the sequence would be divisible by $e (ae)^a$ and hence not minimal.
In the latter case, for $v = a - k$ for $k$ from $1$ to $a-1$ implies that $w = 1 + ka$ and these are indeed  minimal plus-minus weighted zero-sum sequences, 
while other other choices of $v$ and $w$ do not yield minimal plus-minus weighted zero-sum sequences.  

The length of these sequences are $(n - wa) + w  = n - (a-1)w $ for $w$ from $1$ to $a$ and $(a-k) + (1+ka) = a+1+ k(a-1)$ for $k$ from $1$ to $a-1$. 
It follows that  $\gcd \{ |A|-2 \colon A \in \mathcal{A}(H')\} = a-1$; observe that $a-1$ divides   $n-2= a^2 -1$. 
\end{proof}

The problem of determining the set  $\Delta^{\ast}   (H)$ entirely for $H =  \Bc_{\pm} (C_n)$ might be difficult. It certainly is in the case without weights (see \cite{ChapmanetalDeltaCyclic, PlagneSchmidDeltaCyclic}). It seems plausible that results on the minimal distances in the case without weights can be employed to make further progress, yet we do not pursue this line of investigation in this paper.  

For groups of higher rank, we show that  for elementary $p$-groups,  the problem for groups of large rank can be reduced almost entirely to the problem for cyclic groups. 
Again, the exclusion of $0$ in the lemma below is not actual restriction. 

\begin{lemma}
\label{elpDast}
Let $G = C_p^r$ be an elementary $p$-group of rank $r$ for some odd prime $p$. Let $G_0 \subseteq G \setminus \{0\}$ and  $H' =\Bc_{\pm}(G_0)$. If $\min \Delta(H') > 1$, 
then $G_0 =G_1 \uplus \dots \uplus G_k$ where $\langle G_i \rangle$ is cyclic for each $i \in [1, k]$ and $\langle G_0 \rangle = \oplus_{i=1}^k \langle G_i \rangle$.  
\end{lemma}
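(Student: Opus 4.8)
The goal is to show that if $G=C_p^r$ with $p$ an odd prime, $G_0\subseteq G\setminus\{0\}$, $H'=\Bc_{\pm}(G_0)$, and $\min\Delta(H')>1$, then $G_0$ splits as a disjoint union $G_1\uplus\dots\uplus G_k$ with each $\langle G_i\rangle$ cyclic (hence of order $p$) and $\langle G_0\rangle=\bigoplus_{i=1}^k\langle G_i\rangle$. The idea is to extract information from the atoms of $H'$ of small length, using Lemma~\ref{lem_Delta_div}: since $\min\Delta(H')>1$, we have $\gcd\{|A|-2\colon A\in\Ac(H')\}\ge 2$, so in fact this gcd equals $p-1$ (as $|A|-2\in[0,p-2]$ modulo $p$ cannot help; more precisely, every atom that uses only a single element $g$ has length $2$ or $p$, contributing $p-2$, and $p$ is prime, so the gcd is exactly $p-1$). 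Consequently \emph{every} atom of $H'$ has length $\equiv 2\pmod{p-1}$; in particular there is no atom of length $3$.

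The heart of the argument is a combinatorial dichotomy on pairs of elements of $G_0$. Fix distinct $g,h\in G_0$ and consider the subgroup they generate. If $g,h$ are independent, then $\langle g,h\rangle\cong C_p^2$; I would show that in this case $\Bc_{\pm}(\{g,h\})$ (a divisor-closed submonoid of $H'$) contains an atom of length $3$, contradicting the congruence above. A natural candidate is something like $g\,h\,(g+h)$ shifted by a weight: one checks $g+h+(-(g+h))=0$ is not quite a sequence over $\{g,h\}$, so instead use that over $C_p^2$ the short plus-minus atoms can be arranged to have odd length not $\equiv 2\pmod{p-1}$ — this is where I expect to lean on an explicit small zero-sum computation, e.g. producing an atom of length $3$ or more generally of length not congruent to $2$ modulo $p-1$ from two independent elements. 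Concretely, $ge_1$ and $e_2$ and $e_1+e_2$ type relations give $\sigma_\pm$ hitting $0$ with three terms precisely when the group has rank $\ge 2$, because $\pm 1\pm 1\pm 1$ can be made to vanish only when a nontrivial cancellation between two independent directions is available. If on the other hand $g$ and $h$ are dependent, i.e. $h\in\langle g\rangle$, then they contribute nothing new to the "independence graph." Thus the relation on $G_0$ defined by "$g,h$ lie in a common cyclic subgroup" (equivalently $\langle g,h\rangle$ is cyclic) must be such that its failure is forbidden — so it is total on each connected piece, giving the desired partition.

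To finish, I would organize $G_0$ by the equivalence relation "$g\sim h$ iff $\langle g\rangle=\langle h\rangle$," whose classes are the intersections of $G_0$ with the distinct cyclic subgroups it meets. Call these $G_1,\dots,G_k$. By construction $G_0=G_1\uplus\dots\uplus G_k$ and each $\langle G_i\rangle$ is cyclic of order $p$. It remains to show $\langle G_0\rangle=\bigoplus_i\langle G_i\rangle$, i.e. that the cyclic subgroups $\langle G_i\rangle$ are independent. If they were not independent, there would be a minimal nontrivial relation $\sum a_i g_i=0$ with $g_i\in G_i$ chosen as representatives and $a_i\in\{1,\dots,p-1\}$, involving at least two indices; picking two such $g_i,g_j$ from different classes, they are independent in $G$ (since $\langle g_i\rangle\ne\langle g_j\rangle$ in an elementary $p$-group means independent), and we are back in the forbidden "independent pair" case, contradiction. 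Hence the $\langle G_i\rangle$ are independent and the direct sum decomposition holds.

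**Main obstacle.** The crux is the independent-pair step: proving that two independent elements $g,h$ of an elementary $p$-group with $p$ odd force an atom in $\Bc_{\pm}(\{g,h\})$ whose length is not $\equiv 2\pmod{p-1}$ (e.g. an atom of length $3$, or one of length strictly between $2$ and $p$ not of that form). This requires an explicit construction of a short plus-minus weighted minimal zero-sum sequence over $\{g,h\}\cong\{e_1,e_2\}$ and a verification of its minimality — the kind of small but fiddly computation I would want to isolate as a sublemma. Everything else (the reduction to the congruence via Lemma~\ref{lem_Delta_div} and Lemma~\ref{minDeltaH}, the bookkeeping with equivalence classes, and the passage from pairwise independence to a direct sum) is routine.
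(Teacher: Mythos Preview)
Your crucial claim --- that two \emph{independent} elements $g,h\in G_0$ force $\Bc_{\pm}(\{g,h\})$ to contain an atom of length $3$ (or of length not congruent to $2$ modulo some $d>1$) --- is false. When $g,h$ are independent in $C_p^r$, the monoid $\Bc_{\pm}(\{g,h\})$ factors as $\Bc_{\pm}(\{g\})\times\Bc_{\pm}(\{h\})$: any plus-minus weighted zero of a sequence $g^ah^b$ forces the $g$-part and the $h$-part separately to be plus-minus zero-sums (by independence), so the atoms are exactly $g^2,\,g^p,\,h^2,\,h^p$. Hence $\gcd\{|A|-2\}=p-2$ and $\min\Delta(\Bc_{\pm}(\{g,h\}))=p-2>1$ for $p\ge 5$. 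Independent pairs are therefore perfectly compatible with $\min\Delta(H')>1$; indeed, this is precisely what the conclusion of the lemma allows (the pieces $G_i$ sit in independent cyclic summands). Your final step, which derives a contradiction from a dependence among the $\langle G_i\rangle$ by extracting an ``independent pair,'' thus proves nothing.

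What actually needs to be ruled out is the presence in $G_0$ of an element that mixes at least two independent directions \emph{already witnessed inside $G_0$}. The paper's argument runs as follows: if the partition fails, one can find independent $e_1,\dots,e_k\in G_0$ and some $e_0\in G_0$ with $e_0=\sum a_ie_i$ and at least two $a_i\neq 0$. After replacing $e_i$ by $-e_i$ where needed so that $a_i<p/2$, set
\[
U=e_0\,e_1^{p-a_1}e_2^{p-a_2}e_3^{a_3}\cdots e_k^{a_k},\qquad
V=e_0^{2}\,e_1^{p-2a_1}e_2^{p-2a_2}e_3^{2a_3}\cdots e_k^{2a_k}.
\]
Both $U$ and $V$ are minimal plus-minus weighted zero-sum sequences, and $U^2=V\cdot e_1^{\,p}\cdot e_2^{\,p}$ exhibits factorizations of lengths $2$ and $3$, whence $\min\Delta(H')=1$ directly --- no detour through congruences on atom lengths is needed. (A small side remark: the modulus appearing in your argument should be $p-2$, not $p-1$; the single-element atoms $g^2$ and $g^p$ contribute $p-2$ to the gcd.)
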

\begin{proof}
Let  $G_0 \subseteq G$, and let $e_1,\dots, e_k$ be a basis $\langle G_0 \rangle$. 
To show the assertion it suffices to show that if $G_0$ contains some element $e_0 = \sum_{i=1}^k a_ie_i$ 
with $a_i \in [0, p-1]$ and at least two of the $a_i$ non-zero, then $\min \Delta (H') = 1$. 

Without loss we can assume that all the $a_i$ are non-zero and $k \ge 2$. Moreover we can assume that $a_i < p/2$, since we could replace $e_i$ by $(-e_i)$ without affecting the problem. 

Now let $U= e_0e_1^{p-a_1}e_2^{p-a_2}e_3^{a_3} \dots e_k^{a_k}$ and  $V = e_0^2e_1^{p-2a_1}e_2^{p-2a_2}e_3^{2a_3} \dots e_k^{2a_k}$. 
These are both minimal plus-minus weighted zero-sum sequences. We have $U^2 = V e_1^p e_2^p$ and thus the minimal distance is $1$.  
\end{proof}

This lemma shows that $\Delta^{\ast}   (\Bc_{\pm} (C_p^r))$ and $\Delta^{\ast}   (\Bc_{\pm} (C_p))$  are very closely linked.  Before stating this relation in detail, we note  that the argument in the proof of the preceding lemma needs neither the orders of the $e_i$ to be prime nor them to be all equal. We record this as a lemma as it might be useful for future investigations.   
\begin{lemma} 
\label{lem_odd}
Let $G$ be a finite abelian group and let $e_1, \dots, e_k$ be independent elements of odd order,  
say $\ord (e_i) = n_i \ge 3$.    Let $e_0 = a_1 e_1 + \dots  +  a_k e_k$ with $a_i \in [0, n_i - 1]$. Put  $G_0 =  \{e_0, e_1, \dots, e_k\}$ and  $H = \Bc_{\pm} (G_0)$.
If at least two $a_i$ are non-zero, then  $\min \Delta(H) = 1$. 
\end{lemma}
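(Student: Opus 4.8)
The plan is to adapt the construction from the proof of Lemma \ref{elpDast} to the more general setting where the $e_i$ merely have odd order $n_i\ge 3$ and need not all have the same order. As there, the key is to exhibit a single element $B \in H$ possessing two factorizations into atoms whose lengths differ by exactly $1$; since $\min \Delta(H)$ divides every length difference and is at least $1$, this forces $\min \Delta(H) = 1$.

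First I would perform the harmless normalizations. Replacing $e_i$ by $-e_i$ if necessary (which does not change $G_0$ as a set in the relevant sense, nor the monoid up to isomorphism), I may assume $a_i \in [0, (n_i-1)/2]$ for each $i$; here is where oddness of $n_i$ is used, so that $2a_i < n_i$ and there is no ambiguity. Discarding the indices with $a_i = 0$, I may assume all $a_i$ are non-zero, and by hypothesis $k \ge 2$.

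Next I would write down the two atoms explicitly, mimicking the $U$ and $V$ of Lemma \ref{elpDast} but with $p$ replaced by the appropriate $n_i$:
\[
U = e_0\, e_1^{n_1 - a_1} e_2^{n_2 - a_2} e_3^{a_3} \cdots e_k^{a_k}, \qquad
V = e_0^2\, e_1^{n_1 - 2a_1} e_2^{n_2 - 2a_2} e_3^{2a_3} \cdots e_k^{2a_k}.
\]
I would check that both are plus-minus weighted zero-sum sequences: taking all weights $+1$ on the $e_i$-part except weight $-1$ on the single $e_0$ (resp. the $e_0^2$ block) and using independence of the $e_i$, each coordinate sum vanishes modulo $n_i$ — for $U$ the $e_1$-coordinate is $a_1 + (n_1 - a_1) \equiv 0$, similarly for $e_2$, and for $e_3, \dots, e_k$ it is $a_i - a_i = 0$; for $V$ one gets $2a_1 + (n_1 - 2a_1) \equiv 0$ and $2a_i - 2a_i = 0$. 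Then I would verify minimality: any proper plus-minus weighted zero-sum subsequence would have to respect the independence constraints coordinate-wise, and the exponents have been chosen (using $a_i \le (n_i-1)/2$, so $n_i - 2a_i \ge 1$) so that no nonempty proper subsequence sums to zero under any sign choice — this is the step requiring the most care, but it is essentially the same bookkeeping as in Lemma \ref{elpDast}. Finally, a direct computation gives $U^2 = V\, e_1^{n_1} e_2^{n_2}$; since $e_1^{n_1}$ and $e_2^{n_2}$ are atoms (the $e_i$ having order $n_i$), the left side is a factorization of length $2$ and the right side one of length $1 + 1 + 1 = 3$, yielding a distance of $1$.

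The main obstacle I anticipate is the minimality verification for $U$ and $V$, since with unequal orders $n_i$ one must be slightly more careful than in the elementary $p$-group case that no "mixed" cancellation occurs; but because the $e_i$ are independent, any plus-minus weighted zero-sum subsequence splits coordinate-wise, reducing the check to the one-dimensional situation in each $\langle e_i \rangle$, where the inequalities $1 \le n_i - 2a_i$ and $0 < a_i$ guarantee that the only zero-sum subsequences in the $e_i$-coordinate involving a strict subset of the listed copies are empty. With this in hand the conclusion $\min \Delta(H) = 1$ is immediate.
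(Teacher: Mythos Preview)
Your approach is essentially identical to the paper's: the same reductions (drop the indices with $a_i=0$, replace $e_i$ by $-e_i$ to force $a_i<n_i/2$), the same atoms $U$ and $V$, and the same relation $U^2 = V\, e_1^{n_1} e_2^{n_2}$ giving lengths $2$ and $3$. One small slip worth fixing: the weight assignment you \emph{describe} (weight $-1$ on $e_0$, $+1$ on all $e_i$) does not make $U$ a zero-sum---the $e_1$-coordinate would then be $-a_1+(n_1-a_1)=n_1-2a_1\not\equiv 0$; the computation you actually \emph{carry out} (obtaining $a_1+(n_1-a_1)$ and $a_i-a_i$) corresponds to weight $+1$ on $e_0$, $+1$ on the copies of $e_1,e_2$, and $-1$ on the copies of $e_3,\dots,e_k$, which is the correct choice.
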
 
\begin{proof}
Without loss we can assume that all the $a_i$ are non-zero.  Suppose $k \ge 2$. Moreover we can assume that $a_i < n_i/2$ for each $i$, since we could replace $e_i$ by $(-e_i)$ without affecting the problem. 

Now let $U= e_0e_1^{n_1-a_1}e_2^{n_2-a_2}e_3^{a_3} \dots e_k^{a_k}$ and  $V = e_0^2e_1^{n_1-2a_1}e_2^{n_2-2a_2}e_3^{2a_3} \dots e_k^{2a_k}$. 
These are both minimal plus-minus weighted zero-sum sequences. We have $U^2 = V e_1^{n_1} e_2^{n_2}$ and thus the minimal distance is $1$.   
\end{proof}

We now give the description for elementary $p$-groups. 

\begin{theorem}
\label{thm_elp}
Let $p$ be an odd prime and $r \ge 1$. We have $ \Delta^{\ast}   (\Bc_{\pm} (C_p^r)) = \{ \gcd(d_1, \dots, d_r) \colon d_i \in \Delta^{\ast}   (\Bc_{\pm} (C_p))\}$.  
\end{theorem}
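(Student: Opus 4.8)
The plan is to combine the description of divisor-closed submonoids (Lemma~\ref{lem_divclosed}), the reduction supplied by Lemma~\ref{elpDast}, and a general product formula for minimal distances. By Lemma~\ref{lem_divclosed} every divisor-closed submonoid of $\Bc_{\pm}(C_p^r)$ is $\Bc_{\pm}(G_0)$ for some $G_0\subseteq C_p^r$, and since $0$ is prime we may always assume $0\notin G_0$ when computing sets of distances. The case $r=1$ is trivial, so the content is for $r\ge 2$.

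The first step I would isolate is the product formula. Suppose $G_0=G_1\uplus\dots\uplus G_k$ with the $G_i$ nonempty, $0\notin G_0$, and $\langle G_0\rangle=\bigoplus_{i=1}^k\langle G_i\rangle$. Splitting a sequence over $G_0$ according to the block each term lies in gives $\Fc(G_0)\cong\prod_{i=1}^k\Fc(G_i)$, and — this is the key point — because the subgroups $\langle G_i\rangle$ sit in direct sum and the signs attached to terms in different blocks may be chosen independently, a sequence is a plus-minus weighted zero-sum sequence if and only if each of its blocks is; hence $\Bc_{\pm}(G_0)\cong\prod_{i=1}^k\Bc_{\pm}(G_i)$. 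For a finite direct product of reduced BF-monoids $H=\prod_i H_i$ one has $\Ac(H)=\biguplus_i\{1\}^{i-1}\times\Ac(H_i)\times\{1\}^{k-i}$, so $\mathsf{L}_H(a)=\sum_i\mathsf{L}_{H_i}(a_i)$ for $a=(a_1,\dots,a_k)$. Two consequences: each $H_i$ embeds into $H$ as a divisor-closed submonoid, whence $\Delta(H_i)\subseteq\Delta(H)$ and $\gcd\Delta(H)\mid g:=\gcd\{\min\Delta(H_i): H_i\text{ not half-factorial}\}$; and each $\mathsf{L}_{H_i}(a_i)$ lies in a single residue class modulo $\min\Delta(H_i)$, hence modulo $g$, so $\mathsf{L}_H(a)$ lies in a single residue class modulo $g$, forcing $g\mid\gcd\Delta(H)$. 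Thus $\min\Delta(H)=g$ whenever at least one $H_i$ fails to be half-factorial. (This product formula is presumably extractable from \cite{GeroldingerHalterKochBOOK}; in any case I would include the short argument above.)

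For the inclusion $\{\gcd(d_1,\dots,d_r):d_i\in\Delta^{\ast}(\Bc_{\pm}(C_p))\}\subseteq\Delta^{\ast}(\Bc_{\pm}(C_p^r))$, write $C_p^r=\bigoplus_{i=1}^r\langle f_i\rangle$; for each $i$ choose a divisor-closed submonoid $\Bc_{\pm}(G_i)$ of $\Bc_{\pm}(\langle f_i\rangle)\cong\Bc_{\pm}(C_p)$ with $\Delta(\Bc_{\pm}(G_i))\neq\emptyset$ and $\min\Delta(\Bc_{\pm}(G_i))=d_i$; discarding $0$, we may take $G_i\subseteq\langle f_i\rangle\setminus\{0\}$ nonempty, so $\langle G_i\rangle=\langle f_i\rangle$. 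Then $G_0:=G_1\uplus\dots\uplus G_r$ meets the hypotheses of the product formula with every block non-half-factorial, so $\min\Delta(\Bc_{\pm}(G_0))=\gcd(d_1,\dots,d_r)$, and $\Bc_{\pm}(G_0)$ is a divisor-closed submonoid of $\Bc_{\pm}(C_p^r)$ with non-empty set of distances.

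For the reverse inclusion, let $d=\min\Delta(\Bc_{\pm}(G_0))\in\Delta^{\ast}(\Bc_{\pm}(C_p^r))$ with $0\notin G_0$. If $d=1$, then $d=\gcd(1,\dots,1)$ and $1\in\Delta^{\ast}(\Bc_{\pm}(C_p))$ by Lemma~\ref{min=1}. If $d>1$, Lemma~\ref{elpDast} gives $G_0=G_1\uplus\dots\uplus G_k$ with each $\langle G_i\rangle$ cyclic and $\langle G_0\rangle=\bigoplus_i\langle G_i\rangle$; since the ambient group is elementary abelian, each $\langle G_i\rangle\cong C_p$ and $k\le r$. The product formula then gives $d=\gcd\{\min\Delta(\Bc_{\pm}(G_i)):\Delta(\Bc_{\pm}(G_i))\neq\emptyset\}$, a gcd of at least one and at most $r$ elements of $\Delta^{\ast}(\Bc_{\pm}(C_p))$ (each $\Bc_{\pm}(G_i)$ being, via a group isomorphism $\langle G_i\rangle\cong C_p$, a divisor-closed submonoid of $\Bc_{\pm}(C_p)$); padding with repeated copies of one of them to exactly $r$ arguments leaves the gcd unchanged, exhibiting $d$ in the right-hand side. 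I expect the main technical point to be the clean verification of the product formula — in particular obtaining the equality $\min\Delta(H)=g$ rather than only a divisibility — together with the small but unavoidable bookkeeping about the prime element $0$, empty blocks in the decomposition, and padding the gcd to $r$ arguments.
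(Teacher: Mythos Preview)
Your proof is correct and follows essentially the same approach as the paper: both use Lemma~\ref{elpDast} to reduce to a direct-sum decomposition into cyclic pieces and then apply the product formula $\min\Delta\bigl(\prod_i H_i\bigr)=\gcd_i\min\Delta(H_i)$, with the converse obtained by choosing one subset in each cyclic summand. Your write-up is in fact more careful than the paper's, which invokes the product formula without justification and glosses over the $d=1$ case, the exclusion of $0$, and the padding of the gcd to exactly $r$ arguments.
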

\begin{proof}
By Lemma \ref{elpDast} each subset $G_0$ that yields an element of $\Delta^{\ast}   (\Bc_{\pm} (C_p^r))$ other than $1$ can be written as  $G_0 \setminus\{0\} =G_1 \uplus \dots \uplus G_k$ where $\langle G_i \rangle$ is cyclic for each $i \in [1, k]$ and $\langle G_0 \rangle = \oplus_{i=1}^k \langle G_i \rangle$. 
Now, the minimal distance of $\Bc_{\pm}(G_0)$ is the greatest common divisor of the minimal distances of $\Bc_{\pm}(G_i)$ for $i \in [1,k]$, which shows that each element of $ \Delta^{\ast}   (\Bc_{\pm} (C_p^r))$ is of the claimed form. Conversely, given $d_1, \dots, d_r \in \Delta^{\ast}   (\Bc_{\pm} (C_p))$, there are  $G_1, \dots, G_r \subseteq C_p^r$ such that $\langle G_i \rangle$ is cyclic, $\min\Delta (\Bc_{\pm} (G_i))=d_i $ and $C_p^r=  \langle G_1 \rangle  \oplus \dots \oplus \langle G_r \rangle $. Then $ \min \Bc_{\pm}(G_1 \cup \dots  \cup G_r) = \gcd(d_1, \dots, d_r)$. 
\end{proof}

\subsection{Groups of even order}

In the current subsection we focus on groups of even order and start with a result that allows us to construct sets with relatively large minimal distances. 

\begin{lemma} 
\label{lem_even}
Let $G$ be a finite abelian group and let $e_1, \dots, e_r$ be independent elements of even order, 
say $\ord (e_i) = 2 m_i$. Assume that $m_1 + \dots + m_r \ge 2 $.  Let $e_0 = m_1 e_1 + \dots  +  m_r e_r$,  $G_0 =  \{e_0, e_1, \dots, e_r\}$ and  $H = \Bc_{\pm} (G_0)$.
Then $\Delta (H) = \{ m_1  + \dots  +  m_r - 1\}$. 
\end{lemma}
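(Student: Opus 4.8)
The strategy is to mimic the proofs of Lemmas \ref{lem_odd} and \ref{elpDast}: determine the irreducible elements of $H$ explicitly (at least their lengths), then invoke Lemma \ref{minDeltaH} to pass from $\gcd\{|A|-2 : A \in \mathcal{A}(H)\}$ to $\min\Delta(H)$ — but in the even case one must be careful, since Lemma \ref{minDeltaH} only gives $\gcd\{|A|-2\} \mid 2\min\Delta(H)$, so one extra argument is needed to rule out the factor of $2$. First I would set $m = m_1 + \dots + m_r$ and analyze irreducibles. The key observation is that $\ord(e_i) = 2m_i$ means $m_i e_i$ has order $2$, so $2 e_0 = 0$, i.e. $e_0^2$ is a plus-minus weighted zero-sum sequence (in fact with trivial weights), and $e_0$ has order $2$. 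Thus $e_0^2$ is irreducible of length $2$. Also $e_i^{2m_i}$ is irreducible of length $2m_i$ for each $i$ (using that $2m_i$ is even, as in Lemma \ref{lem_onegenerated_ar}), and more relevantly $e_i^{m_i}$ satisfies $m_i e_i$ has order $2$... so actually I should think of $e_i^{2}$ being reducible or not: since $2e_i \neq 0$ in general, $e_i^2$ need not be a weighted zero-sum. The genuinely new atom is one mixing $e_0$ with the $e_i$: consider $U = e_0 \, e_1^{m_1} e_2^{m_2} \cdots e_r^{m_r}$. Since $\sigma_{\pm}(U)$ can realize $-e_0 + m_1 e_1 + \dots + m_r e_r = -e_0 + e_0 = 0$ (as $e_0$ has order $2$), $U$ is a plus-minus weighted zero-sum sequence of length $1 + m$.

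Next I would argue that $U$ is \emph{irreducible}: this is the heart of the computation. One must check that no proper nonempty subsequence of $U$ is itself a plus-minus weighted zero-sum sequence, and that $U$ does not split as a product of two such. Using independence of the $e_i$, any weighted subsum involving a sub-multiset $e_1^{b_1}\cdots e_r^{b_r}$ (with $0 \le b_i \le m_i$) and possibly $e_0^{a}$ ($a \in \{0,1,2\}$) vanishes iff the corresponding combination vanishes in each cyclic component $C_{2m_i} = \langle e_i \rangle$; writing $e_0 = m_i e_i$ in that component, one gets congruences $\pm a m_i \pm (\text{signed sum of } b_i \text{ copies of } 1) \equiv 0 \pmod{2m_i}$. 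A signed sum of $b_i$ copies of $1$ has absolute value at most $b_i \le m_i < 2m_i$, and combined with $a m_i$ one sees the only solutions force $b_i = m_i$ and $a = 1$ simultaneously in all components (the parity in the $e_i$-component, where $m_ie_i$ is the unique order-$2$ element, pins down $b_i \equiv m_i \pmod 2$ together with the size bound). This shows $U$ itself is the minimal mixed relation. Then I would classify \emph{all} atoms: those supported on a single $e_i$ (lengths $2$ and $2m_i$, the latter irrelevant once $m_i \ge 2$ contributes nothing new mod useful gcd — actually length $2m_i \equiv 0$, giving $2m_i - 2$), those equal to $e_0^2$ (length $2$), and $U$ (length $1+m$) together with variants $e_0^{2}e_1^{2m_1-\dots}$ type products that are non-minimal. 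So $\{|A|-2 : A \in \mathcal{A}(H)\}$ consists of $0$, the numbers $2m_i - 2$, and $m - 1$; since each $m_i \le m$ and actually $2m_i - 2$ need not be a multiple of $m-1$ in general — wait, this is the obstacle.

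The main obstacle I anticipate is precisely showing that the \emph{only} irreducibles of length $> 2$ are (translates/powers of) $U$ of length $1+m$, so that $\gcd\{|A|-2\} = m - 1$ exactly, rather than a proper divisor coming from some $2m_i - 2$. The resolution should be: $e_i^{2m_i}$ is \emph{not} irreducible when $m_i \ge 2$ — no, wait, when $\ord(e_i)=2m_i$ is even, by Lemma \ref{lem_onegenerated_ar}(1) the monoid $\Bc_{\pm}(\{e_i\}) \cong (\mathbb{N}_0,+)$, so $e_i^2$ is the only atom supported on $\{e_i\}$, giving length $2$ only. That removes the $2m_i-2$ contributions entirely. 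So the atom list is: $e_i^2$ for each $i$ (length $2$), $e_0^2$ (length $2$), and the mixed atom(s) $U$ of length $1+m$ — one then checks any atom with full support $\{e_0, e_i : i\}$ has length exactly $1+m$ by the congruence analysis, and atoms with partial support reduce to the independent sub-case. Hence $\gcd\{|A|-2\} = m-1$. Finally, to upgrade to $\min\Delta(H) = m-1$ rather than $(m-1)/\gcd(2,\cdot)$: exhibit the single minimal relation $U^2 = e_0^2 \cdot e_1^{2m_1} \cdots e_r^{2m_r}$ — wait, $e_i^{2m_i}$ isn't an atom; instead $U^2 = e_0^2 \cdot (e_1^2)^{m_1} \cdots (e_r^2)^{m_r}$, a factorization of length $1 + m$ versus the length-$2$ factorization $U^2$, giving $\Delta(H) \ni$ a sum of elements of $\Delta(H)$ summing to $m-1$; combined with $\min\Delta(H)\mid m-1$ and the fact that this is the \emph{only} relation pattern (so $\Delta(H) = \{m-1\}$ directly, by checking the two-dimensional structure of the relation lattice), conclude $\Delta(H) = \{m-1\}$ as claimed.
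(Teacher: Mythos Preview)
Your proposal is correct and, after your self-corrections, lands on exactly the paper's argument: classify $\mathcal{A}(H) = \{e_0^2, e_1^2, \dots, e_r^2, U\}$ with $U = e_0 \prod_{i=1}^r e_i^{m_i}$ (using that $\ord(e_i)$ is even so $e_i^2$ is the only atom supported on $\{e_i\}$, and that $\ord(e_0)=2$ forces any atom containing $e_0^2$ to equal $e_0^2$), then read off $\Delta(H)=\{m-1\}$ from the single relation $U^2 = e_0^2 \prod_{i=1}^r (e_i^2)^{m_i}$. The initial plan to go through Lemma~\ref{minDeltaH} is unnecessary and, as you yourself note, would only give $\min\Delta(H)\mid m-1$ with a possible factor-of-$2$ ambiguity; the paper bypasses it entirely and argues directly from the atom list and relation.
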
 
\begin{proof} 
We determine $\Ac (H) $. Clearly $e_i^2 \in \Ac (H) $ for each $0 \le i \le r$. Let $A \in \Ac (H)$. If $e_0 \nmid A$, then the distinct elements in $A$ are independent, and $A$ being minimal implies that $A$ is equal to $e_i^k$ for some $1 \le i \le r$. Since the order of $e_i$ is even, it follows that $k=2 $, compare  Lemma \ref{lem_onegenerated_ar}. If $e_0^2\mid A$, then we claim that $e_0^2  = A$. To see this it suffices to note that the order of $e_0$ is $2$, and  thus, $0$ is the only plus-minus weighted sum of  $e_0^2$, which implies that $Ae_0^{- 2}$ has also a plus-minus weighted zero-sum. The only way how this does not contradict the minimality of $A$ is that  $Ae_0^{- 2}$ is empty. 

Thus, it remains to consider the case that  $A$ contains $e_0$ exactly once. We show that in this case $A$ is equal to  $U= e_0 \prod_{i=1}^r e_i^{m_i}$.
We note that $(m_ie_i )  e_i^k$ is a minimal plus-minus weighted zero-sum sequence if and only if $k = m_i$, we refer to Lemma 6.6 in \cite{BMOS} for a detailed argument.    
The claim now follows from the independence of the $e_i$.

Thus we established that  $\Ac (H) = \{U,e_0^2, e_1^2, \dots, e_r^2 \}$.
Now $U^2 = e_0^2 \prod_{i=1}^r (e_i^2)^{m_i}$ is the only relevant relation between these atoms and the claim follows.
\end{proof} 

The argument also shows that $\cb (H) =  m_1  + \dots  +  m_r + 1$ where $\cb (H)$ denotes the catenary degree of $H$. 
It is interesting to note that this value coincides with the value that one expects for many types of groups for the catenary degree of the monoid of classical zero-sum sequences over $\langle e_1, \dots , e_r \rangle$ (see for example \cite{Catenary}). 

We note that if  $e_1, \dots, e_r$ is an independent generating set of elements of even order of $G$, 
say $\ord (e_i) = 2 m_i$, with $\ord (e_i) \mid \ord (e_{i+1})$, then  $\mathsf{D}^{\ast}(\Bc_{\pm} (G)) =  m_1  + \dots  +  m_r +1$.
This is a lower bound for $ \mathsf{D}(\Bc_{\pm} (G))$ that was obtained in \cite{BMOS}.

Thus, for a group with $\mathsf{r}_2(G)=\mathsf{r}(G)$ we have $\mathsf{D}^{\ast}(\Bc_{\pm} (G))-2 \in \Delta^{\ast}(\Bc_{\pm} (G)) $. 
We recall that we always have $\max \Delta^{\ast}(H) \le \mathsf{D}(H)-2$. Thus, if we know that $\mathsf{D}^{\ast}(\Bc_{\pm} (G)) = \mathsf{D}(\Bc_{\pm} (G))$, we get the exact value of the maximum. However, result of this type are rare. Yet, it was established for $C_{2m}$ in \cite{BMOS} and for $C_2 \oplus C_4$, in \cite{Fabsitsetal}. The result on elementary $2$-groups is already in \cite{GeroldingerKainrath}.  
 
\begin{proposition}
\label{prop_DeltaStar}
Let $G$ be a finite abelian group and let $H = \Bc_{\pm} (G)$. 
\begin{enumerate}
\item If $G$ is cyclic of order at least $3$, then $\max \Delta^{\ast}(H) = \mathsf{D}(H)-2$.   
\item If $G$ is an elementary $2$-group of order at least $4$, then $\Delta^{\ast}(H) =[1, \mathsf{D}(H)-2]$. 
\item If $G$ is a group of order at least $4$ exponent a divisor of $4$, then $[1, \mathsf{D}^{\ast}(H)-2] \subseteq \Delta^{\ast}(H) \subseteq [1, \mathsf{D}(H)-2]$. 
\item If $G = C_n \oplus C_{2n}$ for some odd $n \ge 3$, then $  \mathsf{D}^{\ast}(H)-2 \le \max \Delta^{\ast}(H) \le   \mathsf{D}(H)-2$. 
\end{enumerate}
\end{proposition}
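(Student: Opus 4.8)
The plan is to treat the four items largely separately, invoking the general machinery established above. For item (1), cyclic $G$ of order $n \ge 3$: if $n$ is even this is exactly the statement of the displayed remark preceding the proposition (the lower bound $\mathsf{D}^{\ast}(H)-2 \in \Delta^{\ast}(H)$ combined with the quoted equality $\mathsf{D}^{\ast}(C_{2m}) = \mathsf{D}(C_{2m})$ from \cite{BMOS}, together with Corollary \ref{cor_delta_ast}); if $n$ is odd, this is immediate from Theorem \ref{DeltaAstOdd}, which gives $\max \Delta^{\ast}(H) = n-2$, and one checks $\mathsf{D}(H) = n$ for cyclic $G$ of odd order. So item (1) is a bookkeeping exercise combining already-established facts. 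For item (2), elementary $2$-groups: here $\Bc_{\pm}(G) = \Bc(G)$, so $\Delta^{\ast}(H) = \Delta^{\ast}(\Bc(C_2^r))$, and the claim $\Delta^{\ast}(\Bc(C_2^r)) = [1,\mathsf{D}(C_2^r)-2] = [1, r-1]$ is the result of \cite{GeroldingerKainrath} referenced just above; I would simply cite it.

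For item (3), groups of order at least $4$ with exponent dividing $4$: such a group is $C_2^a \oplus C_4^b$ with $a + 2b = \mathsf{r}(G)$ and $\mathsf{r}_2(G) = \mathsf{r}(G)$. The inclusion $\Delta^{\ast}(H) \subseteq [1, \mathsf{D}(H)-2]$ is Corollary \ref{cor_delta_ast} together with $\min \Delta^{\ast}(H) = 1$ from Lemma \ref{min=1}. For the lower inclusion $[1, \mathsf{D}^{\ast}(H)-2] \subseteq \Delta^{\ast}(H)$, the idea is to realize every value $d \in [1, \mathsf{D}^{\ast}(H)-2]$ as $\min \Delta$ of some $\Bc_{\pm}(G_0)$. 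Here $\mathsf{D}^{\ast}(H) = m_1 + \dots + m_r + 1$ with each $m_i \in \{1,2\}$, so $\mathsf{D}^{\ast}(H) - 2$ ranges up to $\sum m_i - 1$. For a target $d$, I would take a sub-sum $m_{i_1} + \dots + m_{i_s} = d+1$ realizable (always possible since the $m_i$ are $1$'s and $2$'s summing to at least $d+1$), set $G_0 = \{e_0, e_{i_1}, \dots, e_{i_s}\}$ with $e_0 = \sum m_{i_j} e_{i_j}$, and apply Lemma \ref{lem_even} to get $\Delta(\Bc_{\pm}(G_0)) = \{m_{i_1} + \dots + m_{i_s} - 1\} = \{d\}$; the case $d = 1$ is covered by Lemma \ref{min=1} or directly. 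One must check that $\mathsf{r}_2(G) = \mathsf{r}(G)$ guarantees an independent generating set all of whose orders are even, which holds precisely for such $G$.

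For item (4), $G = C_n \oplus C_{2n}$ with $n \ge 3$ odd: the upper bound $\max \Delta^{\ast}(H) \le \mathsf{D}(H) - 2$ is again Corollary \ref{cor_delta_ast}. For the lower bound $\mathsf{D}^{\ast}(H) - 2 \le \max \Delta^{\ast}(H)$, note that $\mathsf{r}_2(G) = 1 < 2 = \mathsf{r}(G)$, so Lemma \ref{lem_even} does not directly apply with the full rank; instead I would pick an independent generating pair $(e_1, e_2)$ with $\ord(e_1) = n$, $\ord(e_2) = 2n$. The element $e_3 = n e_2$ has order $2$. Applying Lemma \ref{lem_even} to the single even-order generator $e_2$ paired with a carefully chosen companion will not reach $\mathsf{D}^{\ast}(H) - 2 = m$ where $\mathsf{D}^{\ast}(H) = 1 + n + 1 = n+2$ — wait, here one of the invariant factors is odd, so $\mathsf{D}^{\ast}(H) = (n-1) + (2n-1) + 1 = 3n - 1$, and $\mathsf{D}^{\ast}(H) - 2 = 3n - 3$. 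The honest route is to exhibit a subset $G_0$ (presumably involving an element of order $n$, an element of order $2n$, and their combination) whose monoid of plus-minus weighted zero-sum sequences has atoms of lengths whose gcd of (length $-2$) equals $3n - 3$, then invoke Lemma \ref{minDeltaH} to pass from that gcd to $\min \Delta$; since $3n-3 = 3(n-1)$ is even one only gets $\min\Delta \in \{3n-3, (3n-3)/2\}$ a priori, so a direct construction of a length chain (as in the $C_{a^2+1}$ lemma above) may be needed to pin down the value exactly, or one settles for the weaker divisibility that still suffices if the construction is arranged so the gcd is odd.

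The main obstacle is item (4): unlike items (1)–(3), which reduce cleanly to Lemma \ref{lem_even} or to quoted results, the group $C_n \oplus C_{2n}$ has mixed parity in its invariant factors, so no single application of Lemma \ref{lem_even} reaches the target, and controlling $\min \Delta$ (not just the gcd of atom-lengths-minus-two) requires either an explicit chain of factorizations of a well-chosen element or a parity argument ensuring the relevant gcd is odd so that Lemma \ref{minDeltaH} gives equality. I expect the proof to spend most of its length constructing that witnessing subset $G_0$ and verifying its atoms.
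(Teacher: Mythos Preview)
Your handling of items (1) and (2) is fine and matches the paper's approach in spirit. There are, however, genuine gaps in items (3) and (4).

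\medskip
\textbf{Item (3).} Your claim that for any target $d$ one can choose a sub-collection with $m_{i_1}+\dots+m_{i_s}=d+1$ because ``the $m_i$ are $1$'s and $2$'s summing to at least $d+1$'' is false when all $m_i=2$, i.e.\ when $G=C_4^b$. Sub-sums of $2$'s are even, so no odd value of $d+1$ is reachable, and you miss all even $d$. The paper avoids this by passing to \emph{subgroups} rather than subfamilies of the original basis: it replaces $e_r$ by $2e_r$ (an element of order $2$), obtaining a subgroup $G_1$ with $\mathsf{D}^{\ast}(\Bc_{\pm}(G_1))=\mathsf{D}^{\ast}(\Bc_{\pm}(G))-1$, and then inducts. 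This step-down by exactly one is what guarantees every intermediate value is hit; your sub-sum mechanism cannot do that in general.

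\medskip
\textbf{Item (4).} The root of your difficulty is a confusion between $\mathsf{D}^{\ast}(G)$ and $\mathsf{D}^{\ast}(H)=\mathsf{D}^{\ast}(\Bc_{\pm}(G))$. For $G=C_n\oplus C_{2n}$ with $n$ odd, the correct value (as used in the paper, coming from \cite{BMOS}) is
\[
\mathsf{D}^{\ast}(H)=(n-1)+\tfrac{2n}{2}+1=2n,
\]
not $(n-1)+(2n-1)+1=3n-1$; the even invariant factor $2n$ contributes $n$, not $2n-1$. So the target is $2n-2$, not $3n-3$. With the correct target the construction is short and clean: take $G_0=\{e_1+e_2,\,e_2\}$ where $\ord(e_1)=n$, $\ord(e_2)=2n$. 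Both elements of $G_0$ have order $2n$, the atoms of $\Bc_{\pm}(G_0)$ are $V_1=(e_1+e_2)^2$, $V_2=e_2^2$, and $U=(e_1+e_2)^n e_2^n$, and the single relation $U^2=V_1^{\,n}V_2^{\,n}$ gives $\mathsf{L}(U^2)=\{2,2n\}$, hence $\Delta(\Bc_{\pm}(G_0))=\{2n-2\}$. No parity ambiguity from Lemma~\ref{minDeltaH} arises because the set of distances is computed directly. Your suspicion that ``mixed parity of invariant factors blocks Lemma~\ref{lem_even}'' is correct, but the remedy is not a more elaborate chain argument; it is simply to choose two elements that both have even order $2n$ even though the standard basis does not.
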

\begin{proof}
We recall that we always have $\max \Delta^{\ast}(H) \le \mathsf{D}(H)-2$. 

Let $G= \langle e_1 \rangle $ be cyclic of order $n \ge 3$. If $n $ is odd, then $ \mathsf{D}(H)-2 = n-2$, and $\Delta(\{e\}) = n-2$. Thus the claim follows. 

In all other cases except for 4, we have  $\mathsf{r}_2(G)=\mathsf{r}(G)$ and we have $\mathsf{D}^{\ast}(\Bc_{\pm} (G))-2 \in \Delta^{\ast}(\Bc_{\pm} (G)) $. 
If $n=2m$ is even and $G= C_n$, then  $ \mathsf{D}(H)-2 = m-1$ and $\Delta(\{e, me\}) = m-1$.

If $G = \oplus_{i=1}^r \langle e_i \rangle $ is an elementary $2$-group of rank $r$, then $ \mathsf{D}^{\ast}(H)-2 = r-1$ and thus $r-1$ is an element of $\Delta^{\ast} (H)$.  
Since $H_s =  \Bc_{\pm} (\oplus_{i=1}^s \langle e_i \rangle)$ is a divisor-closed submonoid of $H$ for each $s\le r$ we have that $\Delta(H_s) \subseteq \Delta (H)$. 
Since $\mathsf{D}^{\ast}( \Bc_{\pm} ( \oplus_{i=1}^s \langle e_i \rangle ) ) =s+1$, the claim follows using the same argument as for $s=r$.  

Suppose $G = \oplus_{i=1}^r \langle e_i \rangle $  with $\ord (e_i) \mid \ord (e_{i+1})$ is a group of exponent dividing $4$ and rank $r$. 
As above we have $\mathsf{D}^{\ast}(\Bc_{\pm} (G))-2 \in \Delta^{\ast}(\Bc_{\pm} (G)) $. To complete the argument it suffices to show that 
$G$ has a subgroup $G'$ with $\mathsf{D}^{\ast}(\Bc_{\pm} (G'))=D$ for each $D \in [3, \mathsf{D}^{\ast}(\Bc_{\pm} (G))]$. 
We set  $G_1 = \oplus_{i=1}^{r-1} \langle e_i \rangle \oplus \langle 2e_r \rangle$. 
Then $\mathsf{D}^{\ast}(\Bc_{\pm} (G_1)) = \mathsf{D}^{\ast}(\Bc_{\pm} (G))-1$. Now the argument follows by a direct induction. 

We show the fourth part. Let $C_n \oplus C_{2n}= \langle e_1 \rangle \oplus \langle e_2 \rangle$ with $\ord(e_1)= n$ and $\ord(e_2)=2n$. 
We consider $G_0 =\{e_1+e_2, e_2 \}$. We determine $\Ac( \Bc_{\pm} (G_0))$. Clearly we have $V_1= (e_1+e_2)^2$ and $V_2= e_2^2$ and no other elements with only one of the two elements. In addition we find $U=(e_1 + e_2)^n e_2^n$. We have $U^2 = V_1^n V_2^n$ and indeed $\mathsf{L}(U^2)= \{2, 2n\}$, and we find that $\Delta ( \Bc_{\pm} (G_0) )  = \{2n-2\}$. We recall that  $\mathsf{D}^{\ast}(H) = n-1 +(2n)/2 + 1=2n$, establishing the claim.  
\end{proof}

In the following result we characterize the groups for which $\max \Delta^{\ast}(H)$ is small. 
\begin{proposition}
Let $G$ be a finite abelian group and let $H = \Bc_{\pm} (G)$. 
\begin{enumerate}
\item Then $\max \Delta^{\ast}(H) = 1$ if and only if $\exp(G) = 3$ or $G=C_2^2$ or $G=C_4$. 
\item Then $\max \Delta^{\ast}(H) = 2$ if and only  $G=C_2^3$ or $G=C_2 \oplus C_4$.
\end{enumerate}
\end{proposition}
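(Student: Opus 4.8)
The plan is to prove both equivalences by the same scheme: first use the results already available to show that $\max\Delta^{\ast}(H)\le 2$ forces $\exp(G)$ to be small, leaving only finitely many groups, and then dispose of those by squeezing $\max\Delta^{\ast}(H)$ between the upper bound $\max\Delta^{\ast}(H)\le\mathsf{D}(H)-2$ of Corollary~\ref{cor_delta_ast} and concrete lower bounds produced by Lemma~\ref{lem_even}, Lemma~\ref{lem_delta} and Proposition~\ref{prop_DeltaStar}. In each case the ``if'' direction is just the explicit evaluation of $\max\Delta^{\ast}(H)$ for the group in question, and the ``only if'' direction is the exhaustion described below.

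First I would bound the exponent. Put $n=\exp(G)$. If $n$ is odd, Theorem~\ref{DeltaAstOdd} gives $\max\Delta^{\ast}(H)=n-2$, which equals $1$ for $n=3$ and is $\ge 3$ for $n\ge 5$; thus an odd exponent is compatible with part~(1) only when $n=3$ and is never compatible with part~(2). If $n$ is even, then $C_n\le G$, so $\Bc_{\pm}(C_n)$ is a divisor-closed submonoid of $H$ by Lemma~\ref{lem_divclosed}, whence $\Delta^{\ast}(\Bc_{\pm}(C_n))\subseteq\Delta^{\ast}(H)$, and Proposition~\ref{prop_DeltaStar}(1) gives $\max\Delta^{\ast}(H)\ge\mathsf{D}(\Bc_{\pm}(C_n))-2=\tfrac n2-1$. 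Hence an even exponent $n\ge 8$ already forces $\max\Delta^{\ast}(H)\ge 3$. So part~(1) leaves only $n\in\{2,3,4\}$ and part~(2) leaves only $n\in\{2,4,6\}$.

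Next I would handle the surviving exponents. For $n=2$, that is $G=C_2^r$ with $r\ge 2$ (otherwise $\Delta^{\ast}(H)=\emptyset$), Proposition~\ref{prop_DeltaStar}(2) gives $\Delta^{\ast}(H)=[1,r-1]$, so $\max\Delta^{\ast}(H)=r-1$, which is $1$ exactly for $C_2^2$ and $2$ exactly for $C_2^3$. For $n=4$, write $G=C_4^{s}\oplus C_2^{t}$ with $s\ge 1$; then $\mathsf{r}_2(G)=\mathsf{r}(G)$, so the remark following Lemma~\ref{lem_even} yields $\mathsf{D}^{\ast}(H)-2=2s+t-1\in\Delta^{\ast}(H)$, giving $2s+t-1\le\max\Delta^{\ast}(H)\le\mathsf{D}(H)-2$. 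For $G=C_4$ this is the cyclic case $C_{2m}$ with $m=2$, where $\mathsf{D}(H)=m+1=3$, so $\max\Delta^{\ast}(H)=1$; for $G=C_2\oplus C_4$ one has $\mathsf{D}(H)=\mathsf{D}^{\ast}(H)=4$ by \cite{Fabsitsetal}, so $\max\Delta^{\ast}(H)=\mathsf{D}^{\ast}(H)-2=2$; and every other group of exponent $4$ has $2s+t\ge 4$, hence $\max\Delta^{\ast}(H)\ge 3$. Finally, for $n=6$ (relevant only to part~(2)) the subgroup $C_6$ already gives $\max\Delta^{\ast}(H)\ge 2$, and one must decide which groups of exponent $6$ lie strictly above $2$: the key device is that whenever $G$ carries two independent elements of even order at least one of which has order $6$, applying Lemma~\ref{lem_even} to that pair produces a divisor-closed submonoid whose minimal distance is $m_1+m_2-1\ge 3$; one checks this forces $\max\Delta^{\ast}(H)\ge 3$ for all groups of exponent $6$ except the ones in the statement, while for the latter the upper bound $\mathsf{D}(H)-2$, together with the known value of $\mathsf{D}(\Bc_{\pm}(C_6))$, pins $\max\Delta^{\ast}(H)$ to $2$.

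I expect the main obstacle to be the exponent-$6$ analysis in the last step. The cyclic-subgroup reduction is too weak for even exponents (it only produces lower bounds $1$ and $2$ for $n=4$ and $n=6$), so the borderline groups of exponent $6$ genuinely have to be treated by hand; in particular the subcase $\mathsf{r}_2(G)=1$ (groups of the shape $C_6$ with an odd-order part adjoined) is not reached by the two-independent-even-elements trick and requires either a sharp bound on $\mathsf{D}(\Bc_{\pm}(G))$ or an ad hoc construction of a divisor-closed submonoid realising a minimal distance $\ge 3$. Pinning down the exact list of exceptions there is where the real work of the proof lies, whereas the exponent-$2$ and exponent-$4$ parts follow essentially immediately from Proposition~\ref{prop_DeltaStar} and the equalities $\mathsf{D}=\mathsf{D}^{\ast}$ known in those cases.
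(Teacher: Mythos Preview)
Your overall strategy---split by the parity of $\exp(G)$, invoke Theorem~\ref{DeltaAstOdd} for odd exponent, use Lemma~\ref{lem_even} and Proposition~\ref{prop_DeltaStar} for lower bounds when the exponent is even, and close with the upper bound $\max\Delta^{\ast}(H)\le\mathsf{D}(H)-2$ from Corollary~\ref{cor_delta_ast}---is exactly the paper's; your treatments of exponents $2$, $3$, and $4$ match the paper's line by line.

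The difference lies in exponent $6$, and your instinct that this is the crux is correct---in fact neither your plan nor the paper's proof closes part~(2) as stated. The paper deduces that $\max\Delta^{\ast}(H)=2$ forces $n\le 6$ and then handles only $n=4$, never returning to $n=6$. Carrying out your own recipe: a group of exponent $6$ with $\mathsf{r}_2(G)\ge 2$ admits two independent even-order elements with $m_1+m_2\ge 4$, so Lemma~\ref{lem_even} gives a minimal distance $\ge 3$; a group of exponent $6$ with $\mathsf{r}_2(G)=1$ other than $C_6$ contains $C_3\oplus C_6$, and Proposition~\ref{prop_DeltaStar}(4) gives $\max\Delta^{\ast}\ge 2\cdot 3-2=4$; but for $G=C_6$ itself Proposition~\ref{prop_DeltaStar}(1) yields $\max\Delta^{\ast}(\Bc_{\pm}(C_6))=\mathsf{D}(\Bc_{\pm}(C_6))-2=4-2=2$. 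Thus $C_6$ genuinely satisfies $\max\Delta^{\ast}(H)=2$. Your phrase ``except the ones in the statement'' cannot be made precise here, since there is no exponent-$6$ group in the stated list; the computation you sketch for $C_6$ is correct and shows it should be on that list, so no argument will exclude it. The obstacle you anticipated is not a difficulty in the proof but an omission in the statement.
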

\begin{proof}
If $|G| \le 2$, then $\Delta^{\ast}(H) = \emptyset$ by Lemma \ref{min=1} and thus $\max \Delta^{\ast}(H) \neq 1$. 
Thus assume $|G|\ge 3$ and let $n = \exp(G)$. 

Supose $n$ is odd.  Then, by Proposition \ref{prop_DeltaStar} $\max \Delta^{\ast}(H) = n-2$. Therefore,  $\max \Delta^{\ast}(H) = 1$ if and only if and $n=3$, 
and $\max \Delta^{\ast}(H) = 2$ is impossible. 

Now, suppose that $n$ is even. Then, by  Proposition \ref{prop_DeltaStar},  $\Delta^{\ast}(H) = [1, r-1]$, where $r$ denotes the rank of $G$. 
Thus $\max \Delta^{\ast}(H) = 1$ if and only if $G=C_2^2$, and  $\max \Delta^{\ast}(H) =2$ if and only if $G=C_2^3$. 

If $n \ge 4$, by Lemma \ref{lem_even}, we have $n/2 -1 \in  \Delta^{\ast}(H)$. 
Thus, $\max \Delta^{\ast}(H) = 1$ implies that $n = 4$, and  $\max \Delta^{\ast}(H) = 2$ implies $n \le 6$. 
If $n=4$, then Lemma \ref{lem_even} shows that $\max \Delta^{\ast}(H) \ge \mathsf{r}_2(G)  + \mathsf{r}_4(G) -1 $.  
Therefore, $\max \Delta^{\ast}(H) = 1$, implies $G=C_4$, by Proposition \ref{prop_DeltaStar} indeed $\Delta^{\ast}(\Bc_{\pm} (C_4)) = \{1\}$, the claim is established.
Moreover $\max \Delta^{\ast}(H) = 2$, implies $G=C_2 \oplus C_4$. Since $\mathsf{D}(H) = 4$ in that case,  indeed $\max \Delta^{\ast}(\Bc_{\pm} (C_2 \oplus C_4)) = 2$. 
\end{proof}

We end the section with a result that might seem peculiar at first, but is highly useful in the context of the subsequent section. 
\begin{theorem}
\label{ParityDelta} 
Let $G$ be a finite abelian group with $|G|\ge 5$. The following statements are equivalent. 
\begin{enumerate}
\item $|G|$ is even. 
\item $\Delta^{\ast}(H)$ contains an even element. 
\item $\Delta_1(H)$ contains an even element.
\end{enumerate}
\end{theorem}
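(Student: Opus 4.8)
The plan is to prove the cycle $(1)\Rightarrow(2)\Rightarrow(3)\Rightarrow(1)$. The implication $(2)\Rightarrow(3)$ is immediate from $\Delta^{\ast}(H)\subseteq\Delta_1(H)$. For $(3)\Rightarrow(1)$: if $\Delta_1(H)$ contains an even element, then, since every element of $\Delta_1(H)$ divides some element of $\Delta^{\ast}(H)$, also $\Delta^{\ast}(H)$ contains an even element; but if $|G|$ were odd, then $|G|\ge 5$ would force $\exp(G)$ to be odd and at least $3$, so Theorem~\ref{DeltaAstOdd} would give $\Delta^{\ast}(H)\subseteq D_2$, where every element divides some $d-2$ with $d\mid\exp(G)$ and hence is odd --- a contradiction. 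So the real content is $(1)\Rightarrow(2)$: if $|G|$ is even and $|G|\ge 5$, then $\Delta^{\ast}(H)$ contains an even element.

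To prove $(1)\Rightarrow(2)$ I would exhibit an even element of $\Delta^{\ast}(H)$ by cases. If $|G|$ is not a power of $2$, pick an odd prime $p\mid|G|$; adding an element of order $2$ to one of order $p$ gives an element $g$ of order $2p$, and Lemma~\ref{lem_even} applied to $\{g\}$ gives $\Delta(\Bc_{\pm}(\{pg,g\}))=\{p-1\}$, which is even and positive; since $\Bc_{\pm}(\{pg,g\})$ is divisor-closed (Lemma~\ref{lem_divclosed}), $p-1\in\Delta^{\ast}(H)$. If $|G|=2^{a}$ with $a\ge 3$, write $\exp(G)=2^{b}$. For $b=1$ we have $G=C_2^a$, hence $\Bc_{\pm}(G)=\Bc(G)$ and $\mathsf{D}(H)=a+1\ge 4$, so Proposition~\ref{prop_DeltaStar}(2) gives $\Delta^{\ast}(H)=[1,a-1]\ni 2$. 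For $b=2$ we have $G\ne C_4$, hence $\mathsf{r}(G)\ge 2$ and $G$ contains independent elements $e_1,e_2$ of orders $2$ and $4$; Lemma~\ref{lem_even} then gives $\Delta(\Bc_{\pm}(\{e_1+2e_2,e_1,e_2\}))=\{2\}$, so $2\in\Delta^{\ast}(H)$.

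The subcase $b\ge 3$ is the hard one. Then $G$ contains an element $g$ of order \emph{exactly} $8$, with $\langle g\rangle\cong C_8$, and I would show $\min\Delta(\Bc_{\pm}(\{g,3g\}))=2$; this gives $2\in\Delta^{\ast}(H)$ because $\Bc_{\pm}(\{g,3g\})$ is divisor-closed. The key point is that membership of $g^{v}(3g)^{w}$ in $\Bc_{\pm}$ depends only on the order $8$ of $g$: it holds iff $s+3t\equiv 0\pmod 8$ for some integers $s\equiv v$, $t\equiv w\pmod 2$ with $|s|\le v$, $|t|\le w$. Enumerating atoms --- where one must use that $\Bc_{\pm}$ is \emph{not} saturated, so an atom may contain a proper zero-sum subsequence (for instance $g^{3}(3g)$ contains the zero-sum $g^2$ but is an atom because its complement $g(3g)$ is not a zero-sum) --- one finds $\Ac(\Bc_{\pm}(\{g,3g\}))=\{g^{2}, (3g)^{2}, g^{3}(3g), g(3g)^{3}\}$, of lengths $2,2,4,4$. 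A short inspection of the relations among these four atoms shows that every relation changes the number of factors by an even amount, so each set of lengths lies in a single residue class modulo $2$; since $\mathsf{L}((g^{2})^{3}(3g)^{2})=\{2,4\}$, it follows that $\min\Delta(\Bc_{\pm}(\{g,3g\}))=2$.

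I expect the subcase $b\ge 3$ to be the main obstacle: one has to get the atom list exactly right (which forces the non-saturated notion of atom for $\Bc_{\pm}$) and then exclude minimal distance $1$. The hypothesis that $g$ has order \emph{exactly} $8$ is indispensable --- if $g$ generated a larger cyclic $2$-group, new atoms would appear (for example $g(3g)^{5}$ becomes an atom as soon as $\ord(g)\ge 16$), producing sets of lengths with gaps of size $1$ and hence $\min\Delta(\Bc_{\pm}(\{g,3g\}))=1$. So one should genuinely pass to a subgroup isomorphic to $C_8$ rather than work with a generator of $G$.
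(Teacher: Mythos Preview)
Your proof is correct and follows essentially the same route as the paper: the equivalence of (2) and (3) via the inclusions $\Delta^{\ast}(H)\subseteq\Delta_1(H)\subseteq\{d'\mid d:d\in\Delta^{\ast}(H)\}$, the reduction of $(3)\Rightarrow(1)$ to Theorem~\ref{DeltaAstOdd}, the non-$2$-group case via an element of order $2p$ and Lemma~\ref{lem_even}, and the order-$8$ computation $\Ac(\Bc_{\pm}(\{g,3g\}))=\{g^{2},(3g)^{2},g^{3}(3g),g(3g)^{3}\}$ are all exactly what the paper does.

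The one organizational difference is in the $2$-group case: the paper splits by \emph{rank} (rank $\ge 3$ gives $2\in\Delta^{\ast}(H)$ via the $C_2^{3}$ subgroup and Proposition~\ref{prop_DeltaStar}; rank $2$ gives $\exp(G)/2\in\Delta^{\ast}(H)$ via Lemma~\ref{lem_even}; rank $1$ uses the order-$8$ trick), whereas you split by \emph{exponent} ($2^{1}$, $2^{2}$, $\ge 2^{3}$). Both are valid; your split pushes more groups into the order-$8$ argument (all $2$-groups of exponent $\ge 8$, not just the cyclic ones), while the paper's split handles most non-cyclic $2$-groups with the easier Lemma~\ref{lem_even} and Proposition~\ref{prop_DeltaStar}. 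Your parity computation for the lengths in $\Bc_{\pm}(\{g,3g\})$ (via $c+d\equiv v\pmod 2$) is a welcome addition --- the paper merely asserts $\min\Delta=2$ after listing the atoms --- and your closing remark that $\ord(g)=8$ is genuinely needed (with $g(3g)^{5}$ appearing as an atom for $\ord(g)\ge 16$) is also correct.
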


\begin{proof}
The equivalence of the second and third statement is clear by the inclusions between $\Delta^{\ast}(H)$ and $\Delta_1(H)$ and the divors of  $\Delta^{\ast}(H)$  recalled in Section \ref{sec_SSL}. 
We show that the first and second statement are equivalent. 

Assume that $\Delta^{\ast}(H)$ contains an even element. Then $|G|$ cannot be odd, as this would contradict Theorem \ref{DeltaAstOdd} as all elements of $D_2$ are odd.
It remains to show that if $|G|$ is even, then $\Delta^{\ast}(H)$ contains an even element.
If $G$ is not a $2$-group, then $G$ contains an element of order $2m$ for some odd $m \ge 2$. 
By Lemma \ref{lem_even} we know that $ m - 1  \in \Delta^{\ast}(H)$, and this is even. 

Thus assume that $G$ is a $2$-group. If the rank of $G$ is at least $3$, then by Proposition \ref{prop_DeltaStar} $\Delta^{\ast}(H)$ contains $2$, 
and if the rank is $2$ then by Proposition \ref{prop_DeltaStar} it contains $1+ (\exp(G)/2 - 1) = \exp(G)/2$, which is even as $\exp(G)\neq 2$ by the assumption $|G| \ge 5$. 

It remains to study the case of cyclic $2$-groups. We note that for $e$ an element of order $8$, the  monoid $\Bc_{\pm}(\{e, 3e\})$ 
has minimal distance $2$; note that the irreducibles are $e^2, (3e)^2, e^3(3e), e(3e)^3$. 
\end{proof}

\subsection{Groups of infinite order}

In this section, we gather a few results on groups of infinite order, which basically reduce the problem of determining the set of minimal distances to the one for finite groups. 

\begin{theorem} 
\label{DeltaStar_inf}
Let $G$ be a an abelian group.  
\begin{enumerate}
\item If $G$ is not a torsion group, then $\Delta^{\ast}(\Bc_{\pm}(G)) = \mathbb{N}$. 
\item If $G$ is a torsion group, then $\Delta^{\ast}(\Bc_{\pm}(G)) = \bigcup_{G' \subseteq G \text{ finite subgroup}} \Delta^{\ast}(\Bc_{\pm}(G'))$.
\end{enumerate}
\end{theorem}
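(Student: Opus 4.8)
The plan is to treat the two parts separately, in each case exploiting the fact (Lemma \ref{lem_divclosed}) that every divisor-closed submonoid of $\Bc_{\pm}(G)$ is of the form $\Bc_{\pm}(G_0)$ for some $G_0 \subseteq G$, together with the monotonicity observation that if $G_1 \subseteq G_0$ then $\min \Delta(\Bc_{\pm}(G_0)) \mid \min \Delta(\Bc_{\pm}(G_1))$.

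For part (1), suppose $G$ is not a torsion group, so $G$ contains an element $g$ of infinite order. The key point is to produce, for each $d \in \N$, a finitely generated divisor-closed submonoid with minimal distance exactly $d$. I would look for a suitable subset $G_0$ of the subgroup $\langle g \rangle \cong \Z$; inside $\Z$ (written additively) one can mimic the cyclic construction: take something like $G_0 = \{1, a\}$ or a two-element set chosen so that the atoms of $\Bc_{\pm}(G_0)$ have lengths whose pairwise differences have gcd equal to $d$, and then invoke Lemma \ref{minDeltaH} to conclude $\min \Delta = d$ provided the resulting gcd is odd (and handle the even case by a slightly larger generating set, as in Lemma \ref{lem_even}-type arguments, or by noting that over $\Z$ one has more freedom than over a finite cyclic group). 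Alternatively, since $\langle g\rangle$ surjects onto every $C_n$ via reduction, one can transport the subsets $G_0 \subseteq C_n$ realizing $n-2 \in \Delta^{\ast}(\Bc_{\pm}(C_n))$ (Theorem \ref{DeltaAstOdd}, or Proposition \ref{prop_DeltaStar}) — but one must check that the lift has the same atom-length structure, which it does because the relevant minimal plus-minus weighted zero-sum relations only involve finitely many congruence conditions. Since $\Delta^{\ast}(H) \subseteq \N$ always, this gives $\Delta^{\ast}(\Bc_{\pm}(G)) = \N$.

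For part (2), suppose $G$ is a torsion group. The inclusion $\supseteq$ is immediate: each finite subgroup $G' \subseteq G$ gives a divisor-closed submonoid $\Bc_{\pm}(G')$ of $\Bc_{\pm}(G)$, and a divisor-closed submonoid of that is again divisor-closed in $\Bc_{\pm}(G)$, so $\Delta^{\ast}(\Bc_{\pm}(G')) \subseteq \Delta^{\ast}(\Bc_{\pm}(G))$. For $\subseteq$, take a divisor-closed submonoid $\Bc_{\pm}(G_0)$ with $\Delta(\Bc_{\pm}(G_0)) \neq \emptyset$ and $\min\Delta(\Bc_{\pm}(G_0)) = d$. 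The crucial observation is that any single element $B \in \Bc_{\pm}(G_0)$ witnessing the distance $d$ (i.e.\ having two factorization lengths $k$ and $k+d$) involves only finitely many group elements, namely $\supp(B)$, and all the atoms and relations used live in the finitely generated — hence, since $G$ is torsion, finite — subgroup $G' = \langle \supp(B) \rangle$. Then $d \ge \min\Delta(\Bc_{\pm}(\supp(B))) \ge \min\Delta(\Bc_{\pm}(G'))$; but also $\min\Delta(\Bc_{\pm}(G')) \mid \min\Delta(\Bc_{\pm}(\supp(B)))$ and, more carefully, I want to show $d$ itself — not merely a multiple — lies in $\Delta^{\ast}(\Bc_{\pm}(G'))$, which holds because $\Bc_{\pm}(\supp(B))$ is a divisor-closed submonoid of $\Bc_{\pm}(G')$ and $d = \min\Delta(\Bc_{\pm}(G_0)) \le \min\Delta(\Bc_{\pm}(\supp(B)))$, with equality forced once one notes the same element $B$ realizes the minimal distance in the larger submonoid too (a distance of $d$ is present, and it cannot be smaller since $\Bc_{\pm}(\supp(B))$ embeds divisor-closed). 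Hence $d \in \Delta^{\ast}(\Bc_{\pm}(G'))$ for a finite subgroup $G'$, giving the reverse inclusion.

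The main obstacle I anticipate is the bookkeeping in part (1): ensuring that the two-element (or few-element) generating set inside $\Z$ or inside $\langle g\rangle$ genuinely produces minimal distance equal to the prescribed $d$ and not merely a divisor or multiple of it — in particular, cleanly covering even values of $d$, where Lemma \ref{minDeltaH} only pins down the minimal distance up to a factor of $2$. One clean route around this is to realize each $d$ using a subset of an element of infinite order together with a well-chosen torsion-free relation, or to cite the even-order construction of Lemma \ref{lem_even} applied inside a finite quotient and lift it; a short case analysis on the parity of $d$ should close the gap. Part (2) is essentially a routine "finiteness of support" argument and should present no real difficulty.
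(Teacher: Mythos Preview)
Your treatment of part~(2) is correct and is essentially the paper's argument: pick $B$ with $d \in \Delta(B)$, pass to $\supp(B)$, and use that $\langle \supp(B)\rangle$ is a finite subgroup since $G$ is torsion. The only cleanup needed is to say plainly why $\min\Delta(\Bc_{\pm}(\supp(B))) = d$: on the one hand $\supp(B) \subseteq G_0$ gives $d = \min\Delta(\Bc_{\pm}(G_0)) \mid \min\Delta(\Bc_{\pm}(\supp(B)))$; on the other hand $\Bc_{\pm}(\supp(B))$ is divisor-closed in $\Bc_{\pm}(G_0)$, so the factorizations of $B$ are the same in both and $d \in \Delta(B)$ forces $\min\Delta(\Bc_{\pm}(\supp(B))) \le d$.

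Part~(1) is where your proposal has a genuine gap. The paper does exactly what you first suggest---a two-element subset of $\langle g\rangle$---but commits to the explicit choice $G_0 = \{g,\,(n+1)g\}$. Over an element of infinite order the atoms of $\Bc_{\pm}(G_0)$ are precisely $g^2$, $((n+1)g)^2$, and $g^{n+1}((n+1)g)$, and the single relation $(g^{n+1}((n+1)g))^2 = (g^2)^{n+1}\cdot((n+1)g)^2$ gives $\Delta(\Bc_{\pm}(G_0)) = \{n\}$ for every $n \ge 1$. No appeal to Lemma~\ref{minDeltaH} is needed and no parity case-split arises; your anticipated obstacle simply does not occur once the right set is written down. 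By contrast, your fallback idea of transporting a realizing subset from a finite quotient $C_n$ genuinely fails: lifting $\{e\} \subseteq C_n$ (which realizes $n-2$ for odd $n$ by Lemma~\ref{lem_onegenerated_ar}) to $\{g\} \subseteq \langle g\rangle$ yields a monoid whose only atom is $g^2$, hence half-factorial---the atom $e^n$ has no analogue over an infinite cyclic group. In general the surjection $\langle g\rangle \to C_n$ does not preserve the atom structure of $\Bc_{\pm}$, so that route would require substantially more work than the direct construction.
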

\begin{proof}
To prove the first claim, let $g \in G$ be an element of infinite order. We assert that for each $n \in \mathbb{N}$, one has  $\Delta(\Bc_{\pm}( \{g, (n+1)g\})) = n$.
One checks that $ \Ac(\Bc_{\pm}( \{g, (n+1)g\})) = \{g^2, ((n+1)g)^2, g^{n+1}((n+1)g)\}$. 
Noting that   $ (g^{n+1}((n+1)g))^2 = g^2 ((n+1)g)^2$ is the only relevant relation, the claim follows. 
 
We now turn to the second claim. It is clear that 
\[\Delta^{\ast}(\Bc_{\pm}(G)) \supset \bigcup_{G' \subseteq G \text{ finite subgroup}} \Delta^{\ast}(\Bc_{\pm}(G')).\] 
Let $d \in \Delta^{\ast}(\Bc_{\pm}(G))$. We need to show that there is a finite subgroup $G'$ of $G$ such that $d \in  \Delta^{\ast}(\Bc_{\pm}(G'))$. 
Let $G_0  \subseteq G$ such that $d = \min \Delta (\Bc_{\pm}(G_0))$. Let $B \in \Bc_{\pm}(G_0)$ such that $d\in \Delta (B)$. Then $d = \min \Delta (\Bc_{\pm}(\supp(B)))$ holds as well.  
Let $G'  = \langle \supp (B) \rangle$, which is a finite subgroup of $G$ as $\supp (B)$ is finite and the group is a torsion group. 
Since $\supp(B) \subseteq G'$, it follows that $d \in  \Delta^{\ast}(\Bc_{\pm}(G'))$. 
\end{proof}

\begin{corollary}
Let $G$ be an elementary $p$-group of infinite rank. Let $H = \Bc_{\pm}(G)$.  
\begin{enumerate}
\item If $p=2$, then $\Delta^{\ast}(H) = \mathbb{N}$. 
\item If $p$ is odd, then $ \Delta^{\ast}   (H) =  \{\gcd(d_1, \dots, d_s) \colon s\in \N, \, d_i \in  \Delta^{\ast}   (\Bc_{\pm}(C_p)) \}  $.   
\end{enumerate}
\end{corollary}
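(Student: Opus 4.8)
The plan is to deduce this corollary directly from the two general results that precede it, namely Theorem~\ref{DeltaStar_inf}(2) for torsion groups and Theorem~\ref{thm_elp} describing $\Delta^{\ast}(\Bc_{\pm}(C_p^r))$ for odd primes. Since an elementary $p$-group of infinite rank is certainly a torsion group, Theorem~\ref{DeltaStar_inf}(2) tells us that $\Delta^{\ast}(H)$ is the union of $\Delta^{\ast}(\Bc_{\pm}(G'))$ over all finite subgroups $G'$ of $G$. Every finite subgroup $G'$ of an elementary $p$-group is again an elementary $p$-group, so $G' \cong C_p^r$ for some $r \ge 0$ (with the degenerate cases $r=0$ and $r=1$ giving nothing new). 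Thus $\Delta^{\ast}(H) = \bigcup_{r \ge 1} \Delta^{\ast}(\Bc_{\pm}(C_p^r))$, and it remains to evaluate this union in each of the two cases.

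For $p=2$, I would use Proposition~\ref{prop_DeltaStar}(2): for $r \ge 2$ one has $\Delta^{\ast}(\Bc_{\pm}(C_2^r)) = [1, \mathsf{D}(\Bc_{\pm}(C_2^r))-2] = [1, r-1]$, since $\Bc_{\pm}(C_2^r) = \Bc(C_2^r)$ and $\mathsf{D}(C_2^r) = r+1$. Taking the union over all $r$ gives $\bigcup_{r\ge 2}[1,r-1] = \mathbb{N}$, which is the first claim. For $p$ odd, Theorem~\ref{thm_elp} gives $\Delta^{\ast}(\Bc_{\pm}(C_p^r)) = \{\gcd(d_1,\dots,d_r) \colon d_i \in \Delta^{\ast}(\Bc_{\pm}(C_p))\}$. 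Taking the union over all $r \ge 1$ yields exactly $\{\gcd(d_1,\dots,d_s) \colon s \in \N,\ d_i \in \Delta^{\ast}(\Bc_{\pm}(C_p))\}$, which is the stated description. One should note the two inclusions are both easy here: any gcd of finitely many elements of $\Delta^{\ast}(\Bc_{\pm}(C_p))$ arises from some finite $r$, and conversely every element of each $\Delta^{\ast}(\Bc_{\pm}(C_p^r))$ is of that gcd form.

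There is essentially no obstacle; the only point requiring a word of care is the bookkeeping at small rank. For $r=1$ the set $\Delta^{\ast}(\Bc_{\pm}(C_p))$ itself is recovered by taking $s=1$ (or all $d_i$ equal), so it is contained in the right-hand side, and for $r=0$ one gets the trivial group contributing nothing. In the $p=2$ case one should also observe that including $r=1$ (the monoid $\Bc_{\pm}(C_2)$, which is half-factorial) contributes only $\emptyset$ and hence does not affect the union. I would therefore write the proof as: an elementary $p$-group of infinite rank is a torsion group all of whose finite subgroups are isomorphic to $C_p^r$, so by Theorem~\ref{DeltaStar_inf}(2) we have $\Delta^{\ast}(H) = \bigcup_{r \in \N} \Delta^{\ast}(\Bc_{\pm}(C_p^r))$, and then invoke Proposition~\ref{prop_DeltaStar}(2) for $p=2$ and Theorem~\ref{thm_elp} for $p$ odd to conclude.
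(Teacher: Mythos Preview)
Your proposal is correct and takes essentially the same approach as the paper, which simply states that the result is a direct consequence of Theorem~\ref{DeltaStar_inf} together with Theorem~\ref{thm_elp} and Proposition~\ref{prop_DeltaStar}. You have filled in exactly the details the paper leaves implicit: reducing via Theorem~\ref{DeltaStar_inf}(2) to finite subgroups $C_p^r$, then invoking Proposition~\ref{prop_DeltaStar}(2) for $p=2$ and Theorem~\ref{thm_elp} for odd $p$, and taking the union over $r$.
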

\begin{proof}
The result is a direct consequence of  Theorem \ref{DeltaStar_inf} and the results on finite elementary $p$-groups obtained in Theorem \ref{thm_elp} and Proposition \ref{prop_DeltaStar}. 
\end{proof}

\begin{corollary}
Let $G$ be an abelian torsion group without an element of even order. Let $D_1$ denote the set of all $d \in \N$ such that $G$ contains an element of order $d+2$ and let $D_2 \subseteq \N$ denote the set of all $d'$ that divide an element of $D_1$.  
Then $D_1   \subseteq \Delta^{\ast}(\Bc(G)) \subseteq D_2$. 
\end{corollary}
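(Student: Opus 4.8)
The plan is to derive the statement directly from the reduction to finite subgroups in Theorem~\ref{DeltaStar_inf}(2) together with the description of $\Delta^{\ast}$ for finite groups of odd order in Theorem~\ref{DeltaAstOdd}. Write $H = \Bc_{\pm}(G)$. First I would observe that every finite subgroup $G'$ of $G$ has odd order: if $|G'|$ were even, Cauchy's theorem would produce an element of order $2$ in $G'$, hence in $G$, contradicting the hypothesis. Since $G$ is a torsion group, Theorem~\ref{DeltaStar_inf}(2) gives
\[
\Delta^{\ast}(H) = \bigcup_{G' \subseteq G \text{ finite subgroup}} \Delta^{\ast}(\Bc_{\pm}(G')).
\]

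Next, for each finite subgroup $G'$ I would apply Theorem~\ref{DeltaAstOdd}. If $|G'| = 1$ then $\Delta^{\ast}(\Bc_{\pm}(G')) = \emptyset$; and since $|G'|$ is odd, the only remaining case is $|G'| \ge 3$, where, with $n' = \exp(G')$ odd and at least $3$, Theorem~\ref{DeltaAstOdd} yields $D_1' \subseteq \Delta^{\ast}(\Bc_{\pm}(G')) \subseteq D_2'$ with $D_1' = \{ d - 2 \colon d \mid n',\, d \ge 3\}$ and $D_2' = \{d' \colon d' \mid d,\, d \in D_1'\}$. Because a finite abelian group contains an element of order $d$ for each divisor $d$ of its exponent and the order of every element divides the exponent, $D_1'$ equals $\{\ord(g) - 2 \colon g \in G',\, \ord(g) \ge 3\}$; likewise $D_1 = \{\ord(g) - 2 \colon g \in G,\, \ord(g) \ge 3\}$.

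It then remains to match the index sets. Every $g \in G$ with $\ord(g) \ge 3$ lies in the finite subgroup $\langle g\rangle$, so $\bigcup_{G'} D_1' = D_1$; passing to divisors gives $\bigcup_{G'} D_2' = D_2$. Combining these identities with the displays above,
\[
D_1 = \bigcup_{G'} D_1' \subseteq \bigcup_{G'} \Delta^{\ast}(\Bc_{\pm}(G')) = \Delta^{\ast}(H) \subseteq \bigcup_{G'} D_2' = D_2,
\]
which is the claim; the degenerate case $G = \{0\}$ is also covered, as then $D_1 = D_2 = \emptyset = \Delta^{\ast}(H)$.

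The only content beyond quoting Theorems~\ref{DeltaStar_inf} and~\ref{DeltaAstOdd} is the elementary bookkeeping identifying $D_1$ and $D_2$ with the unions of the per-subgroup sets $D_1'$ and $D_2'$, and the use of Cauchy's theorem to guarantee that all finite subgroups have odd order; so I do not expect any genuine obstacle here.
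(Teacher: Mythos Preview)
Your proof is correct and takes essentially the same approach as the paper, which merely states that the result is a direct consequence of Theorem~\ref{DeltaStar_inf} and Theorem~\ref{DeltaAstOdd}. You have simply spelled out the bookkeeping (Cauchy's theorem for odd order, the identification $\bigcup_{G'} D_i' = D_i$) that the paper leaves implicit.
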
 
\begin{proof}
The result is a direct consequence of  Theorem \ref{DeltaStar_inf} and  Theorem \ref{DeltaAstOdd}.
\end{proof}

\section{Applications to the characterization problem}

It is obvious that isomorphic abelian groups $G$ and $G'$ yield isomorphic monoids of zero-sum sequences $\mathcal{B}(G)$ and $\mathcal{B}(G')$, and hence also the systems of sets of lengths are equal  $\mathcal{L}(\mathcal{B}(G)) = \mathcal{L}(\mathcal{B}(G'))$. The same hold true if we consider plus-minus weighted zero-sum sequences (or in fact weighted zero-sum sequences for any choice of weights). 

A natural question to ask is to what extent these implications can be reversed. For the first implication this is called the Isomophism Problem,  that is it is the problem of determinig if isomorphy of the monoids of (weighted) zero-sum sequences implies isomorphy of the underlying groups, the problem is completely answered for abelian groups in the case without weights and for groups that are the direct sum of cyclic groups in the case with  plus-minus weights (see \cite{Fabsitsetal} and the references there). 

For the question involving length this is called the Characterization Problem. In its classical form it can be stated as asking whether  
for $G$ and $G'$  finite abelian groups such that $\mathcal{L}(\mathcal{B}(G)) = \mathcal{L}(\mathcal{B}(G'))$,  
it follows that $G$ and $G'$ are isomorphic.  The history of this problem goes back more than fifty years and initially arose in the context of obtaining arithmetic characterizations of the class group of a ring of algebraic integers, whence the name (see \cite{Narkiewicz}). 
We mention that the condition that the group is finite is crucial. All infinite abelian groups yield the same system of sets of lengths, which differs from that of every finite group. This is known as Kainrath's Theorem and we discussed it in Section \ref{sec_SSL}. 

It is well-known and easy to see that $C_1$ and $C_2$ both give rise to half-factorial monoids of zero-sum sequences, indeed the monoids are even free. This simple example apart, there is one more pair of non-isomorphic groups known that yield the same system of set of lengths for their monoids of zero-sum sequences, namely $C_3$ and $C_2^2$.   
The standing conjecture is that these are the only cases where this arises and that for each finite abelian group $G$ that is not isomorphic to one of those groups, a condition that ensures this is to assume that $\mathsf{D}(\mathcal{B}(G)) \ge 4$, it is indeed true that  $\mathcal{L}(\mathcal{B}(G)) = \mathcal{L}(\mathcal{B}(G'))$ implies that $G$ and $G'$ are isomorphic. 
We refer to \cite{CharSurvey}  for an overview of older results on the subject and to \cite{GerSchChar,ZhongChar} for more recent contributions. 

A first investigation of the  Characterization Problem for plus-minus weighted zero-sum problems, and more broadly of the problem of describing the system of sets of lengths for plus-minus weighted zero-sum problems has been undertaken in \cite{Fabsitsetal}. 
Very recently, as already mentioned in Section \ref{sec_SSL}, a result like Kainrath's Theorem has also been obtained for the monoids of plus-minus weighted zero-sum sequences \cite{GeroldingerKainrath}. 

As in the case without weights $C_1$ and $C_2$ both give rise to half-factorial monoids of zero-sum sequences, indeed even free monoids.
For other groups of small order in \cite{Fabsitsetal} the authors proved the following result. 

\begin{theorem}
$\mathcal{L}(\Bc_{\pm} (C_2^2))=\mathcal{L}(\Bc_{\pm} (C_3)) = \mathcal{L}(\Bc_{\pm} (C_4))=\{y+2k + [0,k] \colon y,k \in \mathbb{N}_0\}$
Conversely, if $\mathcal{L}(\Bc_{\pm} (G))= \{y+2k + [0,k] \colon y,k \in \mathbb{N}_0\}$ for some finite abelian group $G$, then $G$ is isomorphic to one of the three groups $C_2^2, C_3, C_4$. 
\end{theorem}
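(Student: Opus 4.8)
The plan is to prove the two implications separately, using in both the fact that the prime element $0$ splits off: since $0$ is prime in $\Bc_{\pm}(G)$, one has $\Bc_{\pm}(G)\cong(\mathbb{N}_0,+)\times\Bc_{\pm}(G\setminus\{0\})$, so $\mathcal{L}(\Bc_{\pm}(G))$ is the closure of $\mathcal{L}(\Bc_{\pm}(G\setminus\{0\}))$ under adding a nonnegative integer. Write $\mathcal{S}=\{y+2k+[0,k]\colon y,k\in\mathbb{N}_0\}$ for the family in the statement. I would first record two elementary facts: every member of $\mathcal{S}$ is an interval $L$ with $\max L\le\tfrac32\min L$ (a property inherited by shifts), and $\mathcal{S}$ is precisely the set of all shifts of the intervals $[2k,3k]$, $k\in\mathbb{N}_0$.

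For the direct inclusion I would show, for each $G\in\{C_2^2,C_3,C_4\}$ and $H_0=\Bc_{\pm}(G\setminus\{0\})$: (a) $\mathsf{D}(H_0)=3$; (b) every set of lengths of $H_0$ is an interval; (c) $[2k,3k]\in\mathcal{L}(H_0)$ for all $k$. Indeed (a) and (b) give that every set of lengths of $H_0$ is an interval $[a,b]$ with $b\le\tfrac32 a$, hence lies in $\mathcal{S}$, and since $\mathcal{S}$ is shift-stable the same holds after reattaching $0$; while (c) gives $\mathcal{S}\subseteq\mathcal{L}(\Bc_{\pm}(G))$. For $C_2^2$ one has $\Bc_{\pm}(G)=\Bc(G)$ and this is the classical description of $\mathcal{L}(\Bc(C_2^2))$; directly, the atoms are $a^2,b^2,(a+b)^2,ab(a+b)$, a zero-sum sequence $a^{\alpha}b^{\beta}(a+b)^{\gamma}$ satisfies $\alpha\equiv\beta\equiv\gamma\bmod 2$, and its factorization lengths are exactly the integers $\tfrac12(\alpha+\beta+\gamma-r)$ with $r\equiv\alpha\bmod 2$, $0\le r\le\min(\alpha,\beta,\gamma)$, which yields (a)--(c). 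For $C_3$ the key point is that a sequence over $\{g,2g\}$ is a plus-minus weighted zero-sum if and only if its length is not $1$ (for length $\ge2$ this is a short residue computation); hence $H_0$ is the monoid of all sequences of length $\ne1$ over a two-element set, the length map is a transfer homomorphism onto the numerical monoid $\langle2,3\rangle$, and (a)--(c) follow from $\mathcal{L}(\langle2,3\rangle)=\{[\lceil n/3\rceil,\lfloor n/2\rfloor]\colon n\in\{0\}\cup\mathbb{N}_{\ge2}\}$. For $C_4$ one first checks $\mathcal{A}(\Bc_{\pm}(\{e,2e,3e\}))=\{e^2,(2e)^2,(3e)^2,e(3e),e^2(2e),(3e)^2(2e),e(3e)(2e)\}$, giving (a); the multiset of atom-types occurring in a factorization of an element depends only on the pair $(v_e+v_{3e},v_{2e})$, which makes the set of factorization lengths an interval, giving (b); and $e^{4k}(2e)^{2k}=(e^2(2e))^{2k}$ has set of lengths $[2k,3k]$, giving (c).

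For the converse, assume $\mathcal{L}(\Bc_{\pm}(G))=\mathcal{S}$. As $\{2,3\}\in\mathcal{S}$, the monoid is not half-factorial, so $|G|\ge3$; as every member of $\mathcal{S}$ is an interval, $\Delta(\Bc_{\pm}(G))=\{1\}$, hence $\Delta^{\ast}(\Bc_{\pm}(G))=\{1\}$ and $\max\Delta^{\ast}(\Bc_{\pm}(G))=1$. By the proposition characterizing the groups with $\max\Delta^{\ast}(\Bc_{\pm}(G))=1$, either $\exp(G)=3$ or $G\in\{C_2^2,C_4\}$; in the latter two cases we are done. If $\exp(G)=3$, then $G\cong C_3^r$; for $r=1$ we are done, and for $r\ge2$ I would use the length-$4$ element $A=e_1^2(e_1+e_2)(e_1-e_2)\in\Bc_{\pm}(C_3^2)$. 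It is a plus-minus weighted zero-sum, and it is an atom: its only length-$2$ weighted-zero-sum subsequence is $e_1^2$, whose complement $(e_1+e_2)(e_1-e_2)$ is not a weighted zero-sum, and there is no length-$1$ one. Since $\Bc_{\pm}(C_3^2)$ is a divisor-closed submonoid of $\Bc_{\pm}(G)$ (Lemma \ref{lem_divclosed}), Lemma \ref{lem_Delta_div} gives $\{2,4\}\subseteq\mathsf{L}(A^2)$; but no member of $\mathcal{S}$ contains both $2$ and $4$, a contradiction. Hence $G\cong C_3$, and the converse is proved.

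I expect the main obstacle to be the direct inclusion for $C_4$: once the seven atoms are listed the rest is routine, but confirming (b) — that every set of lengths really is an interval and not merely contained in one — requires a finite but somewhat delicate case analysis handling the parity constraint on the number of $2e$'s (note for instance that $(2e)^3$ is not a weighted zero-sum). The $C_3$ case is essentially free once the ``length $\ne1$'' characterization is noticed, and the converse is short given the results already in the paper. One point to keep straight throughout is that $\Bc_{\pm}(G\setminus\{0\})$ by itself realizes only the intervals $[a,b]$ with $a$ close to $2(b-a)$ — for example $\langle2,3\rangle$ does not have $[10,11]$ among its sets of lengths — and that the full family $\mathcal{S}$ appears only after reincorporating the prime $0$, i.e.\ after passing to the shift-closure.
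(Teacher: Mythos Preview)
The paper does not prove this theorem; it is quoted from \cite{Fabsitsetal} and stated without proof. So there is no ``paper's own proof'' to compare against, and your attempt stands on its own.

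Your overall strategy is sound and the converse is clean: invoking the proposition that characterizes groups with $\max\Delta^{\ast}(\Bc_{\pm}(G))=1$, then ruling out $C_3^r$ for $r\ge 2$ via the explicit atom $A=e_1^2(e_1+e_2)(e_1-e_2)$ and Lemma~\ref{lem_Delta_div}, is efficient and correct. (An alternative for this last step would be to use $\rho_2(\Bc_{\pm}(C_3^r))=\mathsf{D}(C_3^r)=2r+1\ge 5$, which also contradicts the elasticity bound $3/2$ implicit in $\mathcal{S}$.) The treatment of $C_2^2$ and of $C_3$ via the transfer homomorphism to $\langle 2,3\rangle$ is correct.

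There is, however, a genuine gap in the $C_4$ case. Your justification of (b), namely that ``the multiset of atom-types occurring in a factorization of an element depends only on the pair $(\mathsf{v}_e+\mathsf{v}_{3e},\mathsf{v}_{2e})$'', is false as stated: for instance $e^2(3e)^2(2e)^2$ factors both as $e^2(2e)\cdot(3e)^2(2e)$ and as $(e(3e)(2e))^2$, which use different atom types. What \emph{is} true is that the set of factorization \emph{lengths} depends only on that pair, and this can be made precise: since $3e=-e$, one has $\sigma_{\pm}(S)=\sigma_{\pm}(\varphi(S))$ for the map $\varphi\colon\mathcal{F}(\{e,2e,3e\})\to\mathcal{F}(\{e,2e\})$ sending $3e\mapsto e$, and $\varphi$ restricts to a transfer homomorphism $\Bc_{\pm}(\{e,2e,3e\})\to\Bc_{\pm}(\{e,2e\})$. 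The latter monoid has atoms $e^2,(2e)^2,e^2(2e)$, and a short computation shows that $\mathsf{L}(e^{2a}(2e)^b)=a+[\max(0,\lceil(b-a)/2\rceil),\lfloor b/2\rfloor]$, an interval; this gives (b). Your claim (a) and the atom list are correct (indeed the seven atoms are exactly the $\varphi$-preimages of the three atoms of $\Bc_{\pm}(\{e,2e\})$), and (c) is fine. With this repair, the direct inclusion goes through.
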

Note that the sets $y +2k + [0,k]$ are just the sets $\{m, m+1, m+2, \dots, n\}$ with $n/m \le 3/2$.  Moreover the authors settled the Characterization Problem for cyclic groups of odd order.

In the current section, we try to make further progress on this problem using our results on $\Delta^{\ast}$.  
It is in most cases quite difficult to describe $\mathcal{L}(H)$ completely. In order to solve, the characterization problem one often proceeds like this. 
One establishes that if $\mathcal{L}(H) = \mathcal{L}(H')$, then certain arithmetic invariants of $H$ and $H'$ are equal. Using results on these arithmetic invariants, one obtains information on $H$ and $H'$. 

For invariants that are essentially defined directly via the system of sets of lengths, such as $\rho$, $\rho_k$, $\Delta$, and $\Delta_1$ it is clear that equality of  $\mathcal{L}(H)$  and $\mathcal{L}(H')$ implies the equality of the respective invariants for $H$ and $H'$. 
Since for $H= \mathcal{B}_{\pm}(G)$ and $H'= \mathcal{B}_{\pm}(G')$, it is known that   $\rho_2(H) = \mathsf{D}(H)$ and $\rho_2(H') = \mathsf{D}(H')$ (see \cite{BMOS}), it follows that the equality of $\mathcal{L}(H)$  and $\mathcal{L}(H')$ also implies that $\mathsf{D}(H)= \mathsf{D}(H')$. Since we use the result, we also recall that there it was proved that in case $|G|$ is odd, one has  $\mathsf{D}(H) = \mathsf{D}(G)$.

Likewise the fact that 
\[\Delta^{\ast}(H) \subseteq  \Delta_1(H) \subseteq \{d' \mid d \colon d\in \Delta^{\ast}(H) \},\] implies that in case 
$\mathcal{L}(H) = \mathcal{L}(H')$ also the sets $\Delta^{\ast}(H)$ and $\Delta^{\ast}(H')$ are closely linked: 
$\max \Delta^{\ast}(H) = \max \Delta^{\ast}(H')$ and more precisely, if $M$ denotes this maximum, the sets  $\Delta^{\ast}(H)\cap \mathbb{N}_{>M/2}$ and $\Delta^{\ast}(H')\cap \mathbb{N}_{>M/2}$ are equal.   

Then, one uses results that give the values of these arithmetical invariants in terms of algebraic invariants of the groups, to establish conditions on the groups, and ideally their isomorphy.

\begin{theorem}
\label{thm_char_gen}
Let $G$ be a finite abelian group of odd order and let $G'$ be a finite abelian group. We set $H = \mathcal{B}_{\pm}(G)$ and $H' = \mathcal{B}_{\pm}(G')$. 
If $\mathcal{L}(H) = \mathcal{L}(H')$, then $\exp(G) = \exp(G')$ and $\mathsf{D}(G) = \mathsf{D}(G')$. 
\end{theorem}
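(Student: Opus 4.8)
The plan is to extract the two numbers $\exp(G)$ and $\mathsf{D}(G)$ from the system of sets of lengths via arithmetic invariants that are preserved by the equality $\mathcal{L}(H)=\mathcal{L}(H')$. For the Davenport constant this is immediate: as recalled in the preceding discussion, $\rho_2(H)=\mathsf{D}(H)$ and $\rho_2(H')=\mathsf{D}(H')$, and $\rho_2$ depends only on $\mathcal{L}$; moreover, for $G$ of odd order one has $\mathsf{D}(H)=\mathsf{D}(G)$, so it only remains to conclude $\mathsf{D}(G)=\mathsf{D}(G')$, which will follow once we also control $H'$ enough to invoke the same identity there — but for the statement as phrased we just need $\mathsf{D}(H)=\mathsf{D}(H')$, which is immediate, together with $\mathsf{D}(H)=\mathsf{D}(G)$ (odd order) and, once we show $|G'|$ is odd, $\mathsf{D}(H')=\mathsf{D}(G')$.

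So the first substantive step is to show that $|G'|$ is odd. Here I would use Theorem \ref{ParityDelta}: if $|G|$ is odd and $|G|\ge 5$, then $\Delta^{\ast}(H)$ contains no even element, hence $\Delta_1(H)$ contains no even element; since $\mathcal{L}(H)=\mathcal{L}(H')$ forces $\Delta_1(H)=\Delta_1(H')$ (the invariant $\Delta_1$ is read off from $\mathcal{L}$), we get that $\Delta_1(H')$ contains no even element, and then the equivalence in Theorem \ref{ParityDelta} applied to $G'$ (checking first that $|G'|\ge 5$) yields that $|G'|$ is odd. The small cases $|G|\in\{3\}$ (the only odd order below $5$, apart from $1$) have to be handled separately: $|G|=1$ is excluded since then $\mathcal{L}(H)=\{\{0\}\}\cup\{\{1\}\}$ pins down $G'$, and $|G|=3$ means $G=C_3$, for which $\mathcal{L}(H)$ is the special system $\{y+2k+[0,k]\}$; by the cited theorem of \cite{Fabsitsetal} this forces $G'\in\{C_2^2,C_3,C_4\}$, and one checks $\exp$ and $\mathsf{D}$ directly in those three cases (indeed $\mathsf{D}(H')=\mathsf{D}(H)=\rho_2(H)$ already fails to distinguish, so one uses the known equalities $\mathsf{D}=4$ and then notes the statement is about $\exp(G)=\exp(G')$ and $\mathsf{D}(G)=\mathsf{D}(G')$ — this is where one must be careful, since $C_3$ and $C_2^2$ have $\exp$ equal to $3$ and $2$ respectively, so actually the theorem's conclusion must be read as an honest constraint; I expect the intended reading is that the small exceptional cases are simply part of the hypothesis $|G|\ge 5$ implicitly, or are listed as exceptions — I would state $|G|\ge 5$ or $G\notin\{C_3\}$ explicitly if needed).

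Once $|G'|$ is odd, the second step is to recover the exponent. By Theorem \ref{DeltaAstOdd}, for a group of odd order and exponent $n\ge 3$ we have $\max\Delta^{\ast}(H)=n-2$. Since $\mathcal{L}(H)=\mathcal{L}(H')$ implies $\max\Delta^{\ast}(H)=\max\Delta^{\ast}(H')$ (this follows from the sandwich $\Delta^{\ast}\subseteq\Delta_1\subseteq\{d'\mid d:d\in\Delta^{\ast}\}$ recalled in Section \ref{sec_SSL}, which shows $\max\Delta^{\ast}$ equals $\max\Delta_1$, an $\mathcal{L}$-invariant), and since $|G'|$ is odd so that the same formula $\max\Delta^{\ast}(H')=\exp(G')-2$ applies, we conclude $\exp(G)-2=\exp(G')-2$, i.e. $\exp(G)=\exp(G')$. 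Finally, with $|G'|$ odd we invoke $\mathsf{D}(H')=\mathsf{D}(G')$ and $\mathsf{D}(H)=\mathsf{D}(G)$ together with $\mathsf{D}(H)=\rho_2(H)=\rho_2(H')=\mathsf{D}(H')$ to get $\mathsf{D}(G)=\mathsf{D}(G')$.

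The main obstacle is the parity step: everything else is a direct citation of Theorem \ref{DeltaAstOdd}, Theorem \ref{ParityDelta}, and the identity $\rho_2=\mathsf{D}$, but one must carefully verify that $\Delta_1$ (equivalently $\max\Delta^{\ast}$ and the "contains an even element" property) is genuinely determined by $\mathcal{L}$ — which it is, by definition of $\Delta_1$ as the set of differences of arbitrarily long almost arithmetic progressions occurring in $\mathcal{L}$ — and that the hypothesis $|G|\ge 5$ (or the explicit exclusion of the tiny exceptional groups) is in force so that Theorem \ref{ParityDelta} is applicable on both sides.
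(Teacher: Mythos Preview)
Your approach is essentially identical to the paper's: use Theorem~\ref{ParityDelta} (via $\Delta_1$) to force $|G'|$ odd, then Theorem~\ref{DeltaAstOdd} to read off $\exp(G')-2=\max\Delta^{\ast}(H')=\max\Delta^{\ast}(H)=\exp(G)-2$, and finally $\rho_2=\mathsf{D}$ together with $\mathsf{D}(\mathcal{B}_{\pm}(G))=\mathsf{D}(G)$ for odd-order groups to obtain $\mathsf{D}(G)=\mathsf{D}(G')$.

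Where you go beyond the paper is in worrying about the small cases, and you are right to do so. The paper's proof applies Theorem~\ref{ParityDelta} to $G'$ without checking the hypothesis $|G'|\ge 5$, and in fact the theorem as stated fails for $G=C_1$ (take $G'=C_2$) and for $G=C_3$ (take $G'\in\{C_2^2,C_4\}$, which share the same system of sets of lengths by the cited result from \cite{Fabsitsetal} but have different exponent and Davenport constant). Your suggested fix---assume $|G|\ge 5$---works cleanly: then $\rho_2(H)=\mathsf{D}(G)\ge 5$, whereas $\mathsf{D}(\mathcal{B}_{\pm}(G'))\le 3$ for every $G'$ with $|G'|\le 4$, so $\mathcal{L}(H)=\mathcal{L}(H')$ forces $|G'|\ge 5$ and Theorem~\ref{ParityDelta} becomes legitimately applicable on both sides.
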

\begin{proof}
We know that $\rho(H)= \rho(H')$ and $\Delta_1(H)= \Delta_1(H')$. 
By Theorem \ref{DeltaAstOdd} we know that $\max \Delta_1(H) = \exp(G)-2$.

By Theorem \ref{ParityDelta} we know that $\Delta_1(H)$ only contains odd elements. 
Thus, $\Delta_1(H')$ only contains odd elements as well, and $G'$ is a group of odd order too. Thus, $\max \Delta_1(H') = \exp(G') -2$ as well and $\exp(G) = \exp(G')$.

To obtain the result on the Davenport constants it suffices to recall that $\rho_2(H)= \mathsf{D}(H )$ and $\rho_2(H')= \mathsf{D}(H ')$, and since $|G|$ and $|G'|$ are odd 
we have $\mathsf{D}(H ) = \mathsf{D}(G)$ and $\mathsf{D}(H ') = \mathsf{D}(G')$.

\end{proof}

We insist that indeed the classical Davenport constants of $G$ and $G'$ are equal as for groups of odd order or equivalently odd exponent, as recalled above, they are equal to  $\mathsf{D}(\mathcal{B}_{\pm}(G))$ and  $\mathsf{D}(\mathcal{B}_{\pm}(G'))$ respectively. Below we freely use the results on the Davenport constant recalled in Section \ref{sec_SSL}. 

This result allows us to prove that elementary $p$-groups  for $p$ an  odd prime are characterized by the system of sets of length of for plus-minus weighted zero-sum sequences. Since the same reasoning allows us to characterize certain other $p$-groups and groups of odd exponent, we start with a technical proposition. 

\begin{proposition}
\label{prop_char_dav}
Let $G$ be a finite abelian group of exponent $n$ odd  and Davenport constant $D$ such that there is no group non-isomorphic to $G$ that also has exponent $n$ and Davenport constant $D$. 
If  $\mathcal{L}(\mathcal{B}_{\pm}(G)) = \mathcal{L}(\mathcal{B}_{\pm}(G'))$, then $G \cong G'$.
\end{proposition}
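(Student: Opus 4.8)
The plan is to apply Theorem \ref{thm_char_gen} and then use the hypothesis that $(n, D)$ determines $G$ up to isomorphism. First I would observe that $G$ has odd order (since it has odd exponent $n$), so Theorem \ref{thm_char_gen} applies directly: from $\mathcal{L}(\mathcal{B}_{\pm}(G)) = \mathcal{L}(\mathcal{B}_{\pm}(G'))$ we immediately conclude $\exp(G) = \exp(G')$ and $\mathsf{D}(G) = \mathsf{D}(G')$, that is, $\exp(G') = n$ and $\mathsf{D}(G') = D$. (Here one should be slightly careful: Theorem \ref{thm_char_gen} gives $\mathsf{D}(G) = \mathsf{D}(G')$ where these are the classical Davenport constants, which is exactly what is needed; the remark following that theorem confirms that for groups of odd exponent $\mathsf{D}(\mathcal{B}_{\pm}(G))$ coincides with the classical $\mathsf{D}(G)$, so there is no ambiguity.)

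The second and final step is to invoke the hypothesis of the proposition. Since $G'$ has exponent $n$ and Davenport constant $D$, and by assumption there is no group non-isomorphic to $G$ with exponent $n$ and Davenport constant $D$, it follows that $G' \cong G$. One subtlety worth checking is that $G'$ is finite: since $\mathcal{L}(\mathcal{B}_{\pm}(G')) = \mathcal{L}(\mathcal{B}_{\pm}(G))$ and the latter is not the system of sets of lengths of any infinite abelian group (by the analogue of Kainrath's Theorem recalled in Section \ref{sec_SSL}, every infinite abelian group yields the same system, which in particular contains all finite subsets of $\mathbb{N}_{\ge 2}$ and hence differs from $\mathcal{L}(\mathcal{B}_{\pm}(G))$ for finite $G$ of odd order $\ge 3$ — or trivially if $|G| \le 2$), we may assume $G'$ is finite before applying Theorem \ref{thm_char_gen}.

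There is essentially no obstacle here: the proposition is a clean corollary packaging Theorem \ref{thm_char_gen} together with the explicit uniqueness hypothesis on the pair $(\exp(G), \mathsf{D}(G))$. The only thing that requires any thought is the finiteness reduction for $G'$, and that is handled by the Kainrath-type theorem already cited. So the proof is short: reduce to $G'$ finite, apply Theorem \ref{thm_char_gen} to get matching exponent and Davenport constant, then conclude $G \cong G'$ from the hypothesis.
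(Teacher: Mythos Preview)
Your proposal is correct and takes essentially the same approach as the paper: apply Theorem \ref{thm_char_gen} to get matching exponent and Davenport constant, then invoke the uniqueness hypothesis. Your additional reduction to $G'$ finite is a reasonable extra precaution, though in the paper's context $G'$ is implicitly a finite abelian group (as in the hypothesis of Theorem \ref{thm_char_gen}), so the paper's own proof is simply two lines.
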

\begin{proof}
By Theorem \ref{thm_char_gen} we know that the exponents of $G$ and $G'$ are equal and that the Davenport constants of $G$ and $G'$ are equal. 
The claim is thus obviously true by the assumption on $G$. 
\end{proof}

We first note that this result yields an alternative proof for the characterization of cyclic groups of odd order already obtained in \cite{Fabsitsetal}. 

\begin{corollary}
Let $G$ be a cyclic group of odd order.   
If  $\mathcal{L}(\mathcal{B}_{\pm}(G)) = \mathcal{L}(\mathcal{B}_{\pm}(G'))$, then $G \cong G'$.
\end{corollary}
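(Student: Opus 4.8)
The plan is to apply Proposition~\ref{prop_char_dav} directly. For this I need to verify its hypothesis: that a cyclic group $C_n$ of odd order $n$ is the unique finite abelian group (up to isomorphism) having exponent $n$ and Davenport constant equal to $\mathsf{D}(C_n)$. Since $\mathsf{D}(C_n) = n$, the task reduces to showing that no finite abelian group $G'$ with $\exp(G') = n$ and $G'$ not cyclic can satisfy $\mathsf{D}(G') = n$.

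First I would recall the standard bounds from Section~\ref{sec_SSL}: for $G' \cong C_{n_1} \oplus \dots \oplus C_{n_r}$ with $1 < n_1 \mid \dots \mid n_r$ one has $\mathsf{D}(G') \ge \mathsf{D}^{\ast}(G') = \sum_{i=1}^r (n_i - 1) + 1$. If $G'$ has exponent $n$ but is not cyclic, then $r \ge 2$, so $n_r = n$ and there is at least one further factor $n_1 \ge 2$; hence $\mathsf{D}(G') \ge (n - 1) + (n_1 - 1) + 1 = n + (n_1 - 1) \ge n + 1 > n = \mathsf{D}(C_n)$. Thus the Davenport constant strictly exceeds $n$, so $G'$ cannot have $\mathsf{D}(G') = n$, and the uniqueness hypothesis of Proposition~\ref{prop_char_dav} is satisfied.

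With this in hand, the corollary follows: assuming $\mathcal{L}(\mathcal{B}_{\pm}(G)) = \mathcal{L}(\mathcal{B}_{\pm}(G'))$, Proposition~\ref{prop_char_dav} (with $G = C_n$) yields $G \cong G'$. I should also note the degenerate case: the statement as phrased includes $n = 1$, i.e. $G$ trivial, where $\mathcal{B}_{\pm}(G)$ is trivial; if one wants to be careful one can either exclude $n = 1$ or observe that Theorem~\ref{thm_char_gen} already forces $\exp(G') = 1$ hence $G'$ trivial. There is really no substantive obstacle here — the only thing to watch is making sure the lower bound $\mathsf{D}(G') \ge \mathsf{D}^{\ast}(G')$ is being used in the correct direction and that the exponent-$n$ constraint is genuinely pinning down $n_r = n$. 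I would write this up in three or four lines invoking Proposition~\ref{prop_char_dav}.

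\begin{proof}
By Proposition~\ref{prop_char_dav} it suffices to show that $C_n$, where $n$ is the odd order of $G$, is the only finite abelian group with exponent $n$ and Davenport constant $\mathsf{D}(C_n) = n$. So let $G'$ be a finite abelian group with $\exp(G') = n$ and $\mathsf{D}(G') = n$, and write $G' \cong C_{n_1} \oplus \dots \oplus C_{n_r}$ with $1 < n_1 \mid \dots \mid n_r = n$. If $r \ge 2$, then
\[
\mathsf{D}(G') \ge \mathsf{D}^{\ast}(G') = \sum_{i=1}^r (n_i - 1) + 1 \ge (n-1) + (n_1 - 1) + 1 = n + (n_1 - 1) > n,
\]
a contradiction. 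Hence $r \le 1$, so $G'$ is cyclic of order $n$, that is $G' \cong C_n \cong G$. The claim now follows from Proposition~\ref{prop_char_dav}.
\end{proof}
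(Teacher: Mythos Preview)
Your proof is correct and follows essentially the same approach as the paper: both invoke Proposition~\ref{prop_char_dav} and verify its hypothesis by using that cyclic groups are exactly those finite abelian groups whose exponent equals their Davenport constant. You spell out this last fact via the bound $\mathsf{D}(G') \ge \mathsf{D}^{\ast}(G')$, whereas the paper simply asserts it; otherwise the arguments coincide.
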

\begin{proof}
A cyclic group fulfills the condition of Proposition \ref{prop_char_dav} as cyclic groups are the only groups for which the exponent equals the Davenport constant.
\end{proof}

We now establish the result for elementary $p$-groups of odd order.

\begin{corollary}
Let $G$ be an elementary $p$-group for an odd prime $p$.   
If  $\mathcal{L}(\mathcal{B}_{\pm}(G)) = \mathcal{L}(\mathcal{B}_{\pm}(G'))$, then $G \cong G'$.
\end{corollary}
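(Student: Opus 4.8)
The plan is to invoke Proposition~\ref{prop_char_dav}, so the only thing to verify is that an elementary $p$-group $G = C_p^r$ (with $p$ odd) is uniquely determined among all finite abelian groups by the pair $(\exp(G), \mathsf{D}(G)) = (p, \mathsf{D}(C_p^r))$. First I would record that for an elementary $p$-group the Davenport constant is known exactly: $\mathsf{D}(C_p^r) = \mathsf{D}^{\ast}(C_p^r) = r(p-1)+1$, which is one of the cases (a $p$-group) where $\mathsf{D} = \mathsf{D}^{\ast}$ holds, as recalled in Section~\ref{sec_SSL}. So the data attached to $G$ is exponent $p$ and Davenport constant $r(p-1)+1$.

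Next I would argue that any finite abelian group $G'$ with $\exp(G') = p$ is itself an elementary $p$-group, hence $G' \cong C_p^{r'}$ for some $r' \ge 1$. For such a group $\mathsf{D}(G') = r'(p-1)+1$. Setting this equal to $\mathsf{D}(G) = r(p-1)+1$ forces $r' = r$, whence $G' \cong C_p^r \cong G$. This shows $G$ satisfies the hypothesis of Proposition~\ref{prop_char_dav}, and the corollary follows immediately: if $\mathcal{L}(\mathcal{B}_{\pm}(G)) = \mathcal{L}(\mathcal{B}_{\pm}(G'))$ then by Theorem~\ref{thm_char_gen} we get $\exp(G') = p$ and $\mathsf{D}(G') = \mathsf{D}(G)$, and by the uniqueness just established $G' \cong G$.

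There is essentially no obstacle here; the whole point of the preceding development is that the hard work has already been done in Theorem~\ref{thm_char_gen} (extracting $\exp$ and $\mathsf{D}$ from $\mathcal{L}$, using $\Delta_1$, the parity theorem, and $\rho_2 = \mathsf{D}$) and in the structure theory of the Davenport constant for $p$-groups. The one mild point to be careful about is that Theorem~\ref{thm_char_gen} requires $G$ to have odd order, which is automatic since $p$ is an odd prime, and that the trivial case $r=1$ reduces to the already-known cyclic case but is covered uniformly by the same argument. So the proof will be just the two short paragraphs above: cite $\mathsf{D}(C_p^r) = r(p-1)+1$, observe exponent $p$ forces elementary $p$-group, match Davenport constants to pin down the rank, and apply Proposition~\ref{prop_char_dav}.

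\begin{proof}
For $r \geq 1$ the group $G = C_p^r$ has exponent $p$ and, since $p$-groups satisfy $\mathsf{D}(G) = \mathsf{D}^{\ast}(G)$, Davenport constant $\mathsf{D}(G) = r(p-1)+1$. Conversely, any finite abelian group of exponent $p$ is an elementary $p$-group, say $C_p^{r'}$, with Davenport constant $r'(p-1)+1$; hence the pair $(\exp, \mathsf{D})$ determines $r'$, and therefore no group non-isomorphic to $G$ has the same exponent and Davenport constant. Thus $G$ satisfies the hypothesis of Proposition~\ref{prop_char_dav}, and the claim follows.
\end{proof}
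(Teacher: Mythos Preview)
Your proof is correct and follows essentially the same approach as the paper: both verify that an elementary $p$-group satisfies the hypothesis of Proposition~\ref{prop_char_dav} by noting that any group of prime exponent $p$ is elementary, and that the Davenport constant $\mathsf{D}(C_p^r) = 1 + r(p-1)$ then determines the rank. The paper's proof is simply a one-line version of the same argument.
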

\begin{proof}
For a group $G$ with $\exp(G)$ prime, the Davenport constant is given by $1 + \mathsf{r}(G) (\exp(G)-1)$ and thus elementary $p$-groups fulfill the condition of Proposition \ref{prop_char_dav}.
\end{proof}
 
We briefly discuss the problem for elementary $2$-groups at the end of the section. Yet, before this we establish two results that can be obtained via our method. 

\begin{corollary}
Let $G= C_p \oplus C_{n}$ where  $n$ is odd and $p$ is the smallest prime divisor of $n$.       
If  $\mathcal{L}(\mathcal{B}_{\pm}(G)) = \mathcal{L}(\mathcal{B}_{\pm}(G'))$, then $G \cong G'$.
\end{corollary}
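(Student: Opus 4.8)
The plan is to invoke Proposition \ref{prop_char_dav}, so the whole task reduces to verifying its hypothesis: that $G=C_p\oplus C_n$ is the unique finite abelian group, up to isomorphism, having exponent $n$ and Davenport constant $\mathsf{D}(G)$. First I would record the basic data of $G$: since $p\mid n$ and $n$ is odd, $G$ has odd order, exponent $\exp(G)=n$, and rank $2$; hence, by the results on the Davenport constant recalled in Section \ref{sec_SSL} (rank at most two implies $\mathsf{D}=\mathsf{D}^{\ast}$), we have $\mathsf{D}(G)=\mathsf{D}^{\ast}(G)=(p-1)+(n-1)+1=p+n-1$.

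Next I would take an arbitrary finite abelian group $G'$ with $\exp(G')=n$ and $\mathsf{D}(G')=p+n-1$, write $G'\cong C_{n_1}\oplus\dots\oplus C_{n_r}$ with $1<n_1\mid\dots\mid n_r=n$, and use the general lower bound $\mathsf{D}(G')\ge \mathsf{D}^{\ast}(G')=\sum_{i=1}^r(n_i-1)+1$. Separating off the last summand $n_r-1=n-1$, this yields $p-1\ge\sum_{i=1}^{r-1}(n_i-1)$. The key observation is then that every $n_i$ divides $n$ and is greater than $1$, and because $p$ is the \emph{smallest} prime divisor of $n$, every divisor of $n$ exceeding $1$ is at least $p$; in particular $n_i\ge p$, so $n_i-1\ge p-1>0$ for each $i\le r-1$ when $r\ge 2$. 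Plugging this into the inequality gives $p-1\ge(r-1)(p-1)$, hence $r-1\le 1$, i.e.\ $r\le 2$. The case $r=1$ (so $G'=C_n$, $\mathsf{D}(G')=n$) forces $p=1$, a contradiction, so $r=2$.

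Finally, with $r=2$ the group $G'\cong C_{n_1}\oplus C_n$ again has rank $2$, so $\mathsf{D}(G')=\mathsf{D}^{\ast}(G')=n_1+n-1$; equating this with $p+n-1$ gives $n_1=p$, whence $G'\cong C_p\oplus C_n\cong G$. This verifies the hypothesis of Proposition \ref{prop_char_dav}, and the corollary follows. I do not expect a genuine obstacle here; the only point requiring a little care is the order of operations: one may only apply the exact formula $\mathsf{D}=\mathsf{D}^{\ast}$ to $G'$ after the rank bound $r\le 2$ has been deduced from the \emph{general} inequality $\mathsf{D}(G')\ge\mathsf{D}^{\ast}(G')$, together with the minimality of $p$ among the prime divisors of $n$, which is exactly the place where the hypothesis on $p$ is used.
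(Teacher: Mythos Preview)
Your proof is correct and follows essentially the same approach as the paper's: both reduce to Proposition~\ref{prop_char_dav}, compute $\mathsf{D}(G)=n+p-1$ from the rank-two formula, and then use the minimality of $p$ among the prime divisors of $n$ to rule out all candidate groups $G'$ of exponent $n$ with that Davenport constant except $C_p\oplus C_n$. The only cosmetic difference is that the paper writes $G'\cong H\oplus C_n$ and invokes the inequality $\mathsf{D}(H\oplus C_n)\ge n+\mathsf{D}(H)-1$ together with $\mathsf{D}(H)\ge\exp(H)$, whereas you work directly with the invariant decomposition and the bound $\mathsf{D}(G')\ge\mathsf{D}^{\ast}(G')$; your version has the small advantage of using only facts recalled explicitly in Section~\ref{sec_SSL}.
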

\begin{proof}
It suffice to show that the groups fulfill the condition of Proposition \ref{prop_char_dav}. First recall that the Davenport constant of $G$ is $n + p-1$, as the group has rank two. 
Now, if $G'$ is a group with exponent $n$, then either $G'$ is cyclic or of the form $H\oplus C_n$ where $1 \neq \exp(H)\mid n$. 
In the former case the Davenport of $G'$ is $n$ and thus not equal to that of $G$ in the latter case the Davenport constant is at least $n +  \mathsf{D}(H)-1 \ge n +  \exp(H)-1$ and thus strictly greater than $n+p-1$ unless $\exp(H)=p$ and $\mathsf{D}(H) = \exp(H)$. Thus we get that $H$ is cyclic of order $p$, implying the claim. 
\end{proof}

To obtain a result for $p$-groups, we establish a simple lemma on the values of the Davenport constant of $p$-groups.

\begin{lemma}
\label{lem_Dp^2}
Let $K$ be a $p$-group with Davenport constant $D$. 
\begin{enumerate}
\item If $D < p^2$ and $K'$ is another $p$-group with Davenport constant $D$, then $K$ and $K'$ are isomorphic. 
\item If $D \ge p^2$ then there exists a $p$-group $K'$ with Davenport constant $D$ such that $K$ and $K'$ are not isomorphic. 
\end{enumerate}
\end{lemma}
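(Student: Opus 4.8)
The plan is to reduce both parts to the formula for the Davenport constant of a finite abelian $p$-group recalled in Section~\ref{sec_SSL}: if $K \cong C_{p^{a_1}} \oplus \dots \oplus C_{p^{a_r}}$ with $1 \le a_1 \le \dots \le a_r$, then $\mathsf{D}(K) = \mathsf{D}^{\ast}(K) = 1 + \sum_{i=1}^{r}(p^{a_i} - 1)$. Thus both statements become elementary assertions about the integers $\sum_{i}(p^{a_i}-1)$.

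For part (1), I would use the observation that a single cyclic factor $C_{p^{a}}$ with $a \ge 2$ already forces $\mathsf{D}(K) \ge 1 + (p^2 - 1) = p^2$. Hence $D < p^2$ implies that every $a_i$ equals $1$, i.e.\ $K$ is elementary abelian, say $K \cong C_p^{r}$, and then $D = 1 + r(p-1)$, so that $r = (D-1)/(p-1)$ is determined by $D$ alone. Applying the same argument to $K'$, it too is elementary abelian of rank $(D-1)/(p-1) = r$, whence $K' \cong K$.

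For part (2), the mechanism I would exploit is the identity $p^2 - 1 = (p-1)(p+1)$, which says that one copy of $C_{p^2}$ and $p+1$ copies of $C_p$ contribute equally to $\mathsf{D}^{\ast}$. Concretely, since $p-1$ divides each $p^{a_i}-1$, write $D - 1 = (p-1)s$ with $s = \sum_{i}(p^{a_i}-1)/(p-1) \in \mathbb{N}$; the hypothesis $D \ge p^2$ gives $(p-1)s \ge (p-1)(p+1)$, i.e.\ $s \ge p+1$. Now set $K_1 = C_p^{s}$ and $K_2 = C_{p^2} \oplus C_p^{\,s-p-1}$ (well defined since $s - p - 1 \ge 0$, and equal to $C_{p^2}$ when $s = p+1$). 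A direct computation of $\mathsf{D}^{\ast}$ yields $\mathsf{D}(K_1) = 1 + (p-1)s = D$ and $\mathsf{D}(K_2) = 1 + (p^2-1) + (s-p-1)(p-1) = 1 + (p-1)s = D$. Since $\exp(K_1) = p \ne p^2 = \exp(K_2)$, the groups $K_1$ and $K_2$ are non-isomorphic, so $K$ cannot be isomorphic to both of them; one then takes $K'$ to be whichever of $K_1, K_2$ is not isomorphic to $K$.

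I do not expect a genuine obstacle here; the only point requiring a moment's care is the bookkeeping in part (2) — namely that $K_2$ is well defined and that the two witnesses $K_1$ and $K_2$ are genuinely distinct, including in the boundary case $s = p+1$ — after which the conclusion is automatic.
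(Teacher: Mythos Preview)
Your proof is correct. Part (1) is essentially identical to the paper's argument.

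For Part (2), the paper proceeds differently: it distinguishes two cases according to whether $\exp(K)=p$ or $\exp(K)=p^k$ with $k\ge 2$, and in each case constructs a single explicit $K'$ from $K$ (replacing either $p+1$ copies of $C_p$ by one $C_{p^2}$, or one $C_{p^k}$ by $(p^k-1)/(p-1)$ copies of $C_p$). Your approach instead builds two fixed witnesses $K_1=C_p^{s}$ and $K_2=C_{p^2}\oplus C_p^{\,s-p-1}$ depending only on $D$ and picks whichever is not isomorphic to $K$. Both routes rest on the same identity $p^2-1=(p+1)(p-1)$; yours is a bit more uniform in that it avoids the case split, while the paper's gives an explicit $K'$ described directly in terms of the decomposition of $K$.
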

\begin{proof}
To see the first point, it suffices to note that $D < p^2$ implies $\exp(K) < p^2$ and $\exp(K') < p^2$. Thus, both have exponent $p$  and the claim follows directly. 

For the second point, we distinguish two cases. Assume that the exponent of $K$ is $p$, say $K \simeq C_p^r$. By the assumption on the Davenport constant we get that $r \ge p+1$. 
Yet then $K' = C_p^{r-p-1} \oplus C_{p^2}$ has the same Davenport constant.    

Now, assume that the exponent of $K$ is not prime, say, it is $p^k$ with $k > 1$. Let $K_0$  be a group such that $K \simeq K_0 \oplus C_{p^k}$.
Then set $K' = K_0 \oplus C_{p}^{(p^k -1)/(p-1)}$, which has the same Davenport constant.  
\end{proof}

\begin{corollary}
Let $G$ be a $p$-group for $p$ odd and assume that $\mathsf{D}(G) < \exp(G)  + p^2 -1$.   
If  $\mathcal{L}(\mathcal{B}_{\pm}(G)) = \mathcal{L}(\mathcal{B}_{\pm}(G'))$, then $G \cong G'$.
\end{corollary}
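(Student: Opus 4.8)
The plan is to derive this corollary from Proposition~\ref{prop_char_dav}. Since $p$ is odd, $n := \exp(G) = p^k$ is odd, so Proposition~\ref{prop_char_dav} applies to $G$ provided one checks its hypothesis: that no finite abelian group non-isomorphic to $G$ has both exponent $p^k$ and Davenport constant $D := \mathsf{D}(G)$. Once this is verified, the equality $\mathcal{L}(\mathcal{B}_{\pm}(G)) = \mathcal{L}(\mathcal{B}_{\pm}(G'))$ yields $G \cong G'$ directly. So the whole task reduces to this uniqueness statement, which is where the hypothesis $\mathsf{D}(G) < \exp(G) + p^2 - 1$ enters.

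To prove the uniqueness, I would take an arbitrary finite abelian group $G'$ with $\exp(G') = p^k$ and $\mathsf{D}(G') = D$; as every element of $G'$ has $p$-power order, $G'$ is automatically a $p$-group. Then split off the top invariant factor, writing $G \cong K_0 \oplus C_{p^k}$ and $G' \cong K_0' \oplus C_{p^k}$ with $K_0, K_0'$ $p$-groups of exponent dividing $p^k$. Using that $\mathsf{D} = \mathsf{D}^{\ast}$ for $p$-groups and that $\mathsf{D}^{\ast}$ picks up the summand contribution, i.e. $\mathsf{D}^{\ast}(K_0 \oplus C_{p^k}) = \mathsf{D}^{\ast}(K_0) + (p^k - 1)$, I obtain $\mathsf{D}(K_0) = D - (p^k - 1) = \mathsf{D}(K_0')$. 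The hypothesis $D < \exp(G) + p^2 - 1 = p^k + p^2 - 1$ rewrites as $\mathsf{D}(K_0) = \mathsf{D}(K_0') < p^2$, and then Lemma~\ref{lem_Dp^2}(1), applied to the two $p$-groups $K_0$ and $K_0'$, gives $K_0 \cong K_0'$, hence $G \cong G'$. This is exactly the hypothesis of Proposition~\ref{prop_char_dav}, finishing the argument.

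The only step requiring any care is the Davenport-constant bookkeeping: that $\mathsf{D}(G) = \mathsf{D}^{\ast}(G)$ holds for $p$-groups and that $\mathsf{D}^{\ast}$ is additive along the decomposition $G = K_0 \oplus C_{p^k}$. Both facts are standard, recalled in Section~2 and already used in the proof of Lemma~\ref{lem_Dp^2}, so I do not anticipate a genuine obstacle; the corollary is essentially Theorem~\ref{thm_char_gen} (through Proposition~\ref{prop_char_dav}) combined with the $p$-group uniqueness packaged in Lemma~\ref{lem_Dp^2}, with oddness of $p$ used precisely to make the odd-order machinery of Theorem~\ref{thm_char_gen} available.
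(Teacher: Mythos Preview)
Your proof is correct and follows essentially the same route as the paper: reduce to Proposition~\ref{prop_char_dav}, split off the top invariant factor $C_{p^k}$, use $\mathsf{D}=\mathsf{D}^{\ast}$ for $p$-groups to translate the hypothesis into $\mathsf{D}(K_0)<p^2$, and invoke Lemma~\ref{lem_Dp^2}(1). If anything, you spell out more carefully than the paper that the comparison group $G'$ is forced to be a $p$-group and that the argument applies symmetrically to its complement $K_0'$.
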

\begin{proof}
To proof the corollary, we show that a group $G$ that fulfills the conditions in this corollary, indeed fulfills the conditions in Proposition \ref{prop_char_dav}, which directly yields the result. 
That is, we need to show that for such a $G$ there are no other groups (up to isomorphism) with the same exponent and Davenport constant. Indeed, let $K$ be a group such that $G \simeq K \oplus C_{\exp(G)}$ fulfills the conditions of the corollary. Then $\mathsf{D}(G) = \exp(G) + \mathsf{D}(K) - 1$, as $G$ is a $p$-group. 
It follows that $\mathsf{D}(K) < p^2$,  and thus by Lemma \ref{lem_Dp^2} the group $K$, which is a $p$-group, is determined (up to isomorphism) by its Davenport constant.  
\end{proof}

We did not make progress on the characterization problem for groups of even order as our results on $\Delta^{\ast}(\Bc_{\pm}(G))$ are weaker in that case. 
Moreover, even in a case where we have good knowledge of $\Delta^{\ast}(\Bc_{\pm}(G))$, such as elementary $2$-groups it is not clear how to exploit it, 
and characterizing elementary $2$-groups might actually be difficult. It is worth noting that in \cite{Fabsitsetal} the characterization problem was solved for $C_2^3$ and $C_2 \oplus C_4$, yet needs a detailed analysis of the respective systems of sets of lengths.

\section*{Acknowledgments} The research of W. A. Schmid is supported by the  project ANR-21-CE39-0009 - BARRACUDA. The authors thank the referee for numerous helpful remarks.

\end{document}